\tikzstyle{vertex}=[circle, draw, inner sep=0pt, minimum size=4pt]
\newcommand{\vertex}{\node[vertex]}
\tikzstyle{vtx}=[circle, draw, inner sep=0pt, minimum size=8pt]
\newcommand{\vtx}{\node[vtx]}
\definecolor{darkgreen}{cmyk}{.9,0,.9,.2}
\definecolor{midgray}{gray}{0.60}
\definecolor{lightgray}{gray}{0.90}
\definecolor{lmgray}{gray}{0.70}
\def\a{\alpha}
\def\l{\lambda}
\def\ZZ{{\mathbb Z}}
 \def\fq{\left(\displaystyle\frac{1}{q}\right)}
\def\A{\mathcal{A}}
\def\a{\alpha}
\def\w{\varpi}
\newtheorem*{rep@theorem}{\rep@title}
\newcommand{\newreptheorem}[2]{%
\newenvironment{rep#1}[1]{%
 \def\rep@title{#2 \ref{##1}}%
 \begin{rep@theorem}}%
 {\end{rep@theorem}}}
\newtheorem*{rep@conjecture}{\rep@title}
\newcommand{\newrepconjecture}[2]{%
\newenvironment{rep#1}[1]{%
 \def\rep@title{#2 \ref{##1}}%
 \begin{rep@conjecture}}%
 {\end{rep@conjecture}}}
\newcommand{\addresseshere}{%
  \enddoc@text\let\enddoc@text\relax
}
\theoremstyle{definition}
\newtheorem{theorem}{Theorem}[section]
\newtheorem{proposition}{Proposition}[section] 
\newtheorem{conjecture}{Conjecture}[section]
\newtheorem{lemma}{Lemma}[section] 
\newtheorem{corollary}{Corollary}[section] 
\title{When is the $q$-multiplicity of a weight a power of $q$?}
\author{Pamela E. Harris}
\address{Department of Mathematics and Statistics, Williams College, United States}
\email{peh2@williams.edu}
\author{Margaret Rahmoeller}
\address{Department of Mathematics, Computer Science, and Physics, Roanoke College}
\email{rahmoeller@roanoke.edu}
\thanks{}
\author{Lisa Schneider}
\address{Department of Mathematics and Computer Science, Salisbury University}
\email{lmschneider@salisbury.edu}
\thanks{}
\author{Anthony Simpson}
\address{Department of Mathematics and Statistics, Williams College, United States}
\email{als7@williams.edu}
\keywords{Kostant's weight multiplicity formula, weight $q$-multiplicities, Kostant's partition function, Weyl alternation sets, Fibonacci numbers, combinatorial representation theory}
\date{\today}
\begin{document}

\maketitle

\begin{abstract}
Berenshtein and Zelevinskii provided an exhaustive list of pairs of weights $(\lambda,\mu)$ of simple Lie algebras $\mathfrak{g}$ (up to Dynkin diagram isomorphism) for which the multiplicity of the weight $\mu$ in the representation of $\mathfrak{g}$ with highest weight $\lambda$ is equal to one. 
Using Kostant's weight multiplicity formula we describe and enumerate the contributing terms to the multiplicity for subsets of these pairs of weights and show that, in these cases, the cardinality of these contributing sets is enumerated by (multiples of) Fibonacci numbers. We conclude by using these results to compute the associated $q$-multiplicity for the pairs of weights considered, and conjecture that in all cases the $q$-multiplicity of such pairs of weights is given by a power of $q$.
\end{abstract}


\section{Introduction}
Throughout this manuscript we let $\mathfrak g$ be a simple complex Lie algebra and $\mathfrak{h}$ a Cartan subalgebra with $\dim\mathfrak{h}=r$. Following the notation of \cite{GW}, we let $\Phi$ denote the set of roots corresponding to $(\mathfrak {g,h})$, $\Phi^+\subseteq\Phi$ be a set of positive roots, and $\Delta\subseteq\Phi^+$ be the set of simple roots. The set of integral and dominant integral weights are denoted by  $P(\mathfrak g)$ and $P_+(\mathfrak g)$ respectively. The Weyl group is denoted by $W$ and recall that $W$ is generated by the reflections $s_1,\ldots, s_r$, where $s_i$ is the root reflection corresponding to the simple root $\alpha_i\in\Delta$. For any $w\in W$ we let $\ell(w)$ denote the length of $w$, which represents the minimum nonnegative integer $k$ such that $\sigma$ is a product of $k$ reflections. 

The theorem of the highest weight states that every irreducible (complex) representation of $\mathfrak{g}$ is a highest weight representation $L(\lambda)$ with highest weight $\lambda$. If $\mu$ is a  weight of $L(\lambda)$ then one can compute the multiplicity of this weight using Kostant's weight multiplicity formula~\cite{KMF}:
    \begin{align}
        m(\lambda,\mu)=\displaystyle\sum_{\sigma\in W}^{}(-1)^{\ell(\sigma)}\wp(\sigma(\lambda+\rho)-(\mu+\rho))\label{mult formula},
    \end{align}
where $\wp$ denotes Kostant's partition function  $\wp : \mathfrak h^* \rightarrow \ZZ$, which  is the nonnegative integer-valued function such that for each $ \xi \in \mathfrak h^*$, $\wp(\xi)$ counts the number of ways $\xi$ may be written as a nonnegative linear combination of positive roots.

Although Equation \eqref{mult formula} provides a way to compute weight multiplicities, its practical use is limited. For example, since the order of the Weyl group (which indexes the sum) increases factorially in terms of the rank of the Lie algebra, the number of terms in the sum presents a complication in its practical use. The second complication arises due to the lack of general closed formulas for the value of the partition function involved. One thing that has been noted in past research, is that many of the terms contribute trivially to the sum, i.e. the value of the partition function is zero, and hence these terms do not contribute to the overall multiplicity. This latter point has motivated work in determining the \emph{Weyl alternation set} (corresponding to the integral weights $\lambda$ and $\mu$), defined by
    \begin{align}
        \mathcal A(\lambda,\mu)=\{\sigma\in W:\wp(\sigma(\lambda+\rho)-(\mu+\rho))>0\}.\label{KWMF}
    \end{align}
Note that a Weyl group element $\sigma$ is in $\mathcal{A}(\lambda,\mu)$ if and only if the expression  $\sigma(\lambda+\rho)-(\mu+\rho)$ can be written as a nonnegative $\ZZ$-linear combination of positive roots.

Determining Weyl alternation sets provides a way to describe the complexity in computing weight multiplicities. Since its definition in 2011, there are only a few cases where a concrete description of the elements of these sets exists. This includes the case where $\lambda$ denotes the highest root of the classical Lie algebras and, hence, $L(\lambda)$ corresponds to the adjoint representation. In this case, Harris, Insko, and Williams established that the cardinalities of the Weyl alternation sets $\mathcal {A}(\lambda,0)$
are given by linear recurrences with constant coefficients \cites{PHThesis,Harris,HIW}. In 2017, Chang, Harris, and Insko described the sets $\A(\beta,\mu)$, where $\beta$ is the sum of all of the simple roots in a classical Lie algebra and $\mu$ is an integral weight, and showed that the cardinality of these sets are given by the Fibonacci numbers or multiples of the Lucas numbers \cite{CHI}. 

In this work we extend this body of knowledge by considering pairs of weights $(\lambda,\mu)$ for which it is known that $m(\lambda,\mu)=1$, but whose Weyl alternation sets are unknown. 
For the simple Lie algebras, a complete list of pairs of weight $(\lambda,\mu)$ satisfying $m(\lambda,\mu)=1$ (up to isomorphism of Dynkin diagram) were provided in the work of Berenshtein  and Zelevinskii \cite[Theorem 1.3]{BZ} and is given as follows.
\begin{enumerate}\setlength{\itemindent}{.25in}
    \item[(List A)] Type $A_r$ $(r\geq 1)$: $\lambda=\ell\varpi_1$, $\mu=\sum_{1\leq i\leq r}m_i\varpi_i$, where $m_i\in\mathbb{Z}_+$ and \\ $(\ell-\sum_{1\leq i\leq r} im_i)\in (r+1)\mathbb{N}$.
    \item[(List B)] Type $B_r$ $(r\geq 2)$: $\lambda=\ell\varpi_1$, $\mu=\sum_{1\leq i\leq r}m_i\varpi_i$, where $m_i\in\mathbb{Z}_+$ is even  and \\ $(\ell-1)=(\sum_{1\leq i< r} im_i)+rm_r/2$.
    \item[(List G)] Type $G_2$:
    \begin{itemize}
        \item $\lambda=\ell\varpi_2$, $\mu=m_1\varpi_1+m_2\varpi_2$, where $m_1,m_2\in\mathbb{Z}_+$ and $3\ell-1=2m_1+3m_2$
    \item\label{thm:part3} $\lambda=\varpi_1$, $\mu=0$
    \end{itemize}
    where in all cases $\varpi_1$ and $\varpi_2$ denote fundamental weights of the respective Lie algebra.
\end{enumerate}

In Sections \ref{sec:A} and \ref{sec:B} we provide a description of the elements of Weyl alternation sets $\A(\lambda,\mu)$ in the Lie algebra of type $A_r$ and $B_r$, respectively, for certain pairs of weights $(\lambda,\mu)$ by establishing the following main results.
\begin{theorem}\label{thm:mainA}
In type $A_r$ with $r\geq 1$ and $\ell\in\mathbb{Z}_+$:
\begin{enumerate}
    \item If $\lambda = \ell\w_1$, $\mu=\sum_{1\leq i\leq r}m_i\w_i$, where $\ell, m_i\in\mathbb{Z}_+$ and $(\ell-\sum im_i)=0 \mod (r+1)$, then $s_1\notin \A(\lambda,\mu)$ for any choice of $\ell$ and $r$.
    \item If $\lambda=\ell\varpi_1$, $\mu=\sum_{1\leq i\leq r}m_i\varpi_i$, where $m_i\in\mathbb{Z}_+$ and $(\ell-\sum_{1\leq i\leq r} im_i)=0$, $m_j=0$ for all $3\leq j\leq r$, then 
$\A(\lambda,\mu)=\{1\}$.
\item If $\lambda=\ell\varpi_1$, $\mu=\sum_{1\leq i\leq r}m_i\varpi_i$, where $m_i\in\mathbb{Z}_+$ and $(\ell-\sum_{1\leq i\leq r} im_i)= 0$ and $m_j=0$ for all $4\leq j\leq r$ and $m_3\neq 0$, then 
$\A(\lambda,\mu)=\{1,s_2\}$.
\item If $\lambda=\ell\varpi_1$, $\mu=\sum_{1\leq i\leq r}m_i\varpi_i$, where $m_i\in\mathbb{Z}_+$ and $(\ell-\sum_{1\leq i\leq r} im_i)= 0$, $m_j=0$ for all $5\leq j\leq r$ and $m_4\neq 0$, then 
\[
\A(\lambda,\mu)=\begin{cases}
\{1,s_2,s_3,s_2s_3\}, & \text{if } m_4=1\\
\{1,s_2,s_3,s_2s_3,s_3s_2,s_2s_3s_2\}, & \text{if } m_4\ge 2.
\end{cases}
\]
\end{enumerate}
\end{theorem}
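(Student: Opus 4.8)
The plan is to work throughout in the standard coordinate model of type $A_r$: write $\alpha_i=\epsilon_i-\epsilon_{i+1}$, take $\Phi^+=\{\epsilon_i-\epsilon_j : 1\le i<j\le r+1\}$, and identify $W$ with $S_{r+1}$ acting on $\RR^{r+1}$ by permuting coordinates. Here $\rho=(r,r-1,\dots,1,0)$ and $\varpi_k\equiv\epsilon_1+\dots+\epsilon_k$ modulo the all-ones vector, so that, with $M_j:=\sum_{i=j}^{r}m_i$,
\[
\lambda+\rho=(\ell+r,\ r-1,\ r-2,\ \dots,\ 1,\ 0),\qquad \mu+\rho=(M_1+r,\ M_2+r-1,\ \dots,\ M_r+1,\ 0),
\]
and both are strictly decreasing vectors whose last entry is $0$. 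Put $a=\lambda+\rho$, $b=\mu+\rho$ and $t:=(\ell-\sum_i im_i)/(r+1)$; the List A congruence says exactly that $\lambda-\mu$ lies in the root lattice, equivalently $t\in\ZZ$, and moreover $t\ge 0$ whenever $\A(\lambda,\mu)\neq\emptyset$. The preliminary step I would establish is the standard description of $\wp$ in type $A_r$: for $\xi$ in the root lattice, $\wp(\xi)>0$ if and only if every coefficient of $\xi$ in the simple-root basis is a nonnegative integer, and the $\alpha_k$-coefficient of $\xi$ equals the $k$-th prefix sum of the sum-zero $\epsilon$-representative of $\xi$. This turns membership into the explicit criterion
\[
\sigma\in\A(\lambda,\mu)\iff Q_k(\sigma):=\sum_{i=1}^{k}\bigl(a_{\sigma^{-1}(i)}-b_i\bigr)\ \ge\ kt\quad\text{for all }k=1,\dots,r.
\]

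For part (1) I would just compute one coefficient. For $\sigma=s_1$, which interchanges the first two coordinates, the $\alpha_1$-coefficient of $\sigma(\lambda+\rho)-(\mu+\rho)$ is $Q_1(s_1)-t=(a_2-b_1)-t=(r-1)-(M_1+r)-t=-1-M_1-t$, which is negative as soon as $t\ge 0$; and if $t<0$ then the $\alpha_r$-coefficient $t-a_{\sigma^{-1}(r+1)}$ is negative for every $\sigma$, so $\A(\lambda,\mu)=\emptyset$. In either case $s_1\notin\A(\lambda,\mu)$.

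For parts (2)--(4) we have $\ell=\sum_i im_i$, hence $t=0$, and the argument has a ``forced fixed points'' phase and a ``finite check'' phase. For the first: since $a_1=\ell+r$ is the only entry of $a$ that is $\ge r$, the inequality $Q_1(\sigma)=a_{\sigma^{-1}(1)}-(M_1+r)\ge0$ forces $\sigma(1)=1$; and a descending induction shows that whenever $M_k=0$ (equivalently $m_k=\dots=m_r=0$), using $\sum_i(a_{\sigma^{-1}(i)}-b_i)=0$ and the fixed points $\sigma(i)=i$ already obtained for $i>k$, the inequality $Q_{k-1}(\sigma)\ge0$ collapses to $(r+1-k)-a_{\sigma^{-1}(k)}\ge0$ and forces $\sigma(k)=k$. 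Plugging in the support hypotheses ($m_j=0$ for $j\ge3$, resp.\ $j\ge4$, resp.\ $j\ge5$) one gets $\A(\lambda,\mu)\subseteq\{1\}$ in case (2), $\A(\lambda,\mu)\subseteq\{1,s_2\}$ in case (3), and, in case (4), $\A(\lambda,\mu)$ contained in the parabolic $\langle s_2,s_3\rangle\cong S_3$ permuting positions $\{2,3,4\}$, i.e.\ in $\{1,s_2,s_3,s_2s_3,s_3s_2,s_2s_3s_2\}$. For the finite check one evaluates $Q_k(\sigma)$ on each surviving candidate (all $Q_k$ beyond the support being automatically $0$). Case (2) gives $\lambda-\mu=m_2\alpha_1$, so $\A(\lambda,\mu)=\{1\}$. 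In case (3), $Q_1=m_2+2m_3\ge0$ for both candidates while $Q_2=m_3$ for $\sigma=1$ and $Q_2=m_3-1$ for $\sigma=s_2$, both $\ge0$ precisely because $m_3\neq0$, so $\A(\lambda,\mu)=\{1,s_2\}$. In case (4), writing $v_2,v_3$ for the entries that $\sigma$ places in positions $2$ and $3$, one gets $Q_1=m_2+2m_3+3m_4\ge0$ always, $Q_2=(m_3+2m_4)+v_2-(r-1)\ge0$ always (worst case $m_3+2m_4-2\ge0$, which holds since $m_4\ge1$), and the decisive quantity $Q_3=m_4+v_2+v_3-(2r-3)$, which equals $m_4$ for $\sigma\in\{1,s_2\}$, $m_4-1$ for $\sigma\in\{s_3,s_2s_3\}$, and $m_4-2$ for $\sigma\in\{s_3s_2,s_2s_3s_2\}$. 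Hence the first four elements always lie in $\A(\lambda,\mu)$, the last two lie in it iff $m_4\ge2$, and this is exactly the claimed dichotomy between $m_4=1$ and $m_4\ge2$.

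I expect the only genuinely delicate point to be the low-rank boundary behaviour (e.g.\ $r=1,2$ in (1)--(2), $r=3$ in (3), $r=4$ in (4)), where the block of forced fixed points abuts or absorbs the free block $\{2,3,4\}$ and one must recheck that the descending induction and the prefix-sum criterion still read off correctly; apart from that, the main obstacle is bookkeeping — keeping the roles of $a_i$, $b_i$, $M_i$, the shift $t$, and the $S_{r+1}$-indexing straight through the six-way case analysis in (4).
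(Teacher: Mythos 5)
Your proposal is correct, and it reaches the theorem by a genuinely different route than the paper. The paper stays in the simple-root basis throughout: it converts $(\ell-m_1)\w_1-m_2\w_2-\cdots$ into $\sum_i c_i\a_i$ by hand and evaluates $\sigma(\lambda+\rho)-\rho-\mu$ separately for $\sigma=1$, $s_i$, $s_is_j$, and $s_2s_3s_2$, reading off the signs of the $c_i$. Your prefix sums $Q_k(\sigma)-kt$ reproduce exactly those coefficients (for instance your values $Q_3=m_4$, $m_4-1$, $m_4-2$ on the three classes of candidates in part (4) are precisely the paper's $\a_3$-coefficients for $\{1,s_2\}$, $\{s_3,s_2s_3\}$, and $\{s_3s_2,s_2s_3s_2\}$), so the finite checks agree. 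The substantive difference is how the Weyl group gets cut down to a finite candidate list. The paper verifies $s_i\notin\A(\lambda,\mu)$ for the relevant generators and then concludes, e.g.\ in Proposition \ref{prop:m4}, that $\A(\lambda,\mu)$ ``can only contain Weyl group elements formed by the simple reflections $s_2$ and $s_3$''; the inference from ``a generator is not in $\A$'' to ``no word containing it is in $\A$'' is not a formal consequence of the definition and is not argued in the text. Your forced-fixed-point phase closes exactly this point: $Q_1(\sigma)\ge 0$ pins $\sigma(1)=1$ because $a_1=\ell+r$ is the unique entry of $a$ that is at least $b_1=M_1+r$ (using $\ell\ge M_1$), and the descending induction on $k$ with $M_k=0$ turns $Q_{k-1}(\sigma)\ge 0$ into $a_{\sigma^{-1}(k)}\le r+1-k$, pinning $\sigma(k)=k$; this proves the containment of $\A(\lambda,\mu)$ in the relevant parabolic subgroup outright, so your write-up is tighter than the published one here. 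What the paper's method buys in exchange is that its explicit root-coordinate expressions for $\sigma(\lambda+\rho)-\rho-\mu$ are reused verbatim in Section \ref{sec:qanalog} to evaluate $\wp_q$ and compute the $q$-multiplicities, whereas your prefix sums would have to be unpacked back into those expressions for that purpose. Two minor remarks: the low-rank boundary cases you flag do all work out, since when the support forces the induction to start at $k=r+1$ it consists only of its base case $\sigma(r+1)=r+1$ and the free block is read off the same way; and your separate $t<0$ branch in part (1) is not strictly needed, because $-1-M_1-t=-(\ell+r+1+\sum_i m_i(r+1-i))/(r+1)$ is negative unconditionally, which is exactly the paper's closed form --- though handling it as you do is harmless.
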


\begin{theorem}\label{thm:mainB}
In type $B_r$  with $r\geq 3$:
\begin{enumerate}
    \item Let  $\ell$ be a positive odd integer. Then   $\sigma\in\A(\ell\w_1,(\ell-1-4k)\w_1+2k\w_2)$ where $k\in \ZZ_{\geq 0}$ if and only if $\sigma$ is a product of nonconsecutive $s_i$'s with $1<i\leq r$.
    \item If $\ell, r \in 2\ZZ_{\ge 0}$ under the condition $\ell-1=\sum_{1\le i< r} im_i + \frac{rm_r}{2}$ with $m_i\in 2\ZZ_{\ge 0}$, then $\A(\ell\w_1,\mu)=\emptyset$.
    \item  All $\sigma\in\A(\ell\w_1,(\ell-1-4j-6k)\w_1+2j\w_2+2k\w_3)$ where $j,k\in \ZZ_{\geq 0}$ and $k\neq 0$ are products of nonconsecutive $s_i$'s with $1<i\leq r$ or products of the form $\{s_{i_1}\dots s_{i_n}s_2s_3\}$ with nonconsecutive $s_{i_j}$'s for $4<i_j\leq r$.
\end{enumerate} 
\end{theorem}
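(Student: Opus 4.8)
The plan is to translate membership in these alternation sets into a system of prefix‑sum inequalities in orthonormal coordinates, and then to solve that system by exploiting how much larger the first coordinate of $\lambda+\rho$ is than the others.

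The basic reduction is this: in type $B_r$ the submonoid of the root lattice generated by the positive roots coincides with the one generated by the simple roots, so $\wp(\xi)>0$ if and only if $\xi=\sum_{i=1}^r c_i\alpha_i$ with every $c_i\in\ZZ_{\ge 0}$. Writing $\xi=\sum_i v_i\varepsilon_i$ in the standard realization $\alpha_i=\varepsilon_i-\varepsilon_{i+1}$ $(i<r)$, $\alpha_r=\varepsilon_r$, one has $c_k=v_1+\cdots+v_k$, so $\sigma\in\A(\lambda,\mu)$ exactly when every partial sum $v_1+\cdots+v_k$ $(1\le k\le r)$ of the $\varepsilon$-coordinates of $v=\sigma(\lambda+\rho)-(\mu+\rho)$ is a nonnegative integer (integrality being automatic here). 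I would then write out $\lambda+\rho$ and $\mu+\rho$ explicitly for the three families; the point is that in parts (1) and (3) one has $\lambda+\rho=(\ell+r-\tfrac12,\,r-\tfrac32,\ldots,\tfrac12)$, whose first entry dwarfs the others, while $\mu+\rho$ agrees with $\rho$ outside its first two (respectively three) coordinates.

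For the sufficiency half of (1), take $\sigma$ a product of pairwise nonconsecutive reflections $s_i$ with $1<i\le r$. Such a $\sigma$ fixes the large first coordinate and acts on the strictly decreasing block $(r-\tfrac32,\ldots,\tfrac12)$ as a union of disjoint adjacent transpositions, possibly with the sign change at the last coordinate; since consecutive entries of that block differ by exactly $1$, each transposed pair perturbs $v$ locally by $(\ldots,-1,+1,\ldots)$ and the terminal sign change by $(\ldots,-1)$, and a short check shows the partial sums never become negative (they dip to $0$ at the left end of each block and at the end, and are at least $1$ otherwise). Part (2) is a one‑line parity argument: under its hypotheses the defining relation $\ell-1=\sum_{i<r}im_i+\tfrac{rm_r}{2}$ forces an odd left side and an even right side, so the family of weights it describes is empty and the assertion holds vacuously. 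For (3) one checks in the same way that the nonconsecutive products, together with the elements $s_{i_1}\cdots s_{i_n}s_2s_3$ with nonconsecutive $s_{i_j}$ and $4<i_j\le r$, all lie in the alternation set; here the hypothesis $k\neq 0$ and the enlarged room at the second coordinate created by the $\varpi_3$-component of $\mu$ are precisely what make the block $s_2s_3$ admissible.

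The heart of the argument, and the step I expect to be the main obstacle, is the necessity half, which I would handle in two stages. First, any $\sigma\in\A(\lambda,\mu)$ must fix the coordinate $\varepsilon_1$: if $\sigma$ moves the entry $\ell+r-\tfrac12$ to another position or flips its sign, the partial sum through that position falls short of its maximal possible value by at least $\ell+1\ge 2$, whereas the slack supplied by $\mu+\rho$ is at most $1$ at every position other than the first — and at the first position one invokes dominance of $\mu$, $4k\le\ell-1$, to rule out moves and sign changes — so $\sigma(\varepsilon_1)=\varepsilon_1$. Second, with $\varepsilon_1$ fixed, $\sigma$ is a signed permutation of the remaining block of consecutive half‑integers, and I would impose the prefix‑sum inequalities for all $k$ simultaneously: because this block has gaps exactly $1$, the requirement that the $m$-th prefix of $\sigma(\lambda+\rho)$ lie within the available slack (which equals $1$ at all but the first one or two coordinates) of the maximum leaves, at each frontier, only the options of no change, an adjacent swap there, or — at the final coordinate alone — one sign change, and iterating over all frontiers pins $\sigma$ down to exactly the list claimed. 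The subtle point, and where the bookkeeping is heaviest, is that a single prefix inequality only constrains $\sigma$ near one frontier; one must use all $r$ of them together to exclude large interior rearrangements whose prefixes individually look admissible but jointly are not, and in case (3) one must additionally verify that the enlarged slack at the first coordinates admits the new $s_2s_3$-family but nothing beyond it.
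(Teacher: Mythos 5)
Your proposal is correct in outline, but it proves the theorem by a genuinely different route than the paper. The paper works entirely in the simple-root basis: Proposition \ref{prop:sinonconsec} computes $\sigma(\rho)=\rho-\sum_j\a_{i_j}$ for nonconsecutive products directly, and the necessity direction (Propositions \ref{lem:Bnonconsprod} and the analogue for part (3)) is run as an induction on $\ell(\sigma)$ that is carried out explicitly only for lengths $0,1,2$, after which elements containing a consecutive pair $s_{j-1}s_j$ in a reduced word are excluded by assertion. You instead pass to orthonormal coordinates, observe that $\wp(\xi)>0$ is equivalent to nonnegativity of all prefix sums $v_1+\cdots+v_k$ of the $\varepsilon$-coordinates, and reduce membership in $\A(\lambda,\mu)$ to a deficit-versus-slack comparison: the slack vector is $(1+2k,1,\dots,1)$ in part (1) and $(1+2j+4k,\,1+2k,\,1,\dots,1)$ in part (3), the huge first entry of $\lambda+\rho$ forces $\sigma(\varepsilon_1)=\varepsilon_1$, and the unit gaps in the remaining block $(r-\tfrac32,\dots,\tfrac12)$ turn the necessity direction into a clean combinatorial classification of signed permutations with all prefix deficits at most the slack (disjoint adjacent transpositions, an optional terminal sign flip, and in part (3) the single extra $3$-cycle corresponding to $s_2s_3$ permitted by the enlarged slack $1+2k$ at the second coordinate). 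What each approach buys: the paper's explicit simple-root expressions for $\sigma(\lambda+\rho)-\rho-\mu$ are reused verbatim in the $q$-multiplicity computations of Section \ref{sec:qanalog}, so that choice of coordinates pays off later; your prefix-sum argument gives a tighter and more self-contained necessity proof, since it characterizes \emph{all} admissible signed permutations at once rather than extrapolating from the length-$\le 2$ computations, and it makes transparent exactly why $k\neq 0$ is needed to admit $s_2s_3$ and why nothing beyond that family appears. The remaining work in your plan is the bookkeeping for the classification lemma (including the verification that the only admissible sign change is at the last coordinate, forced by $D_r=2\sum_{\text{flipped}}b_p\le 1$), but that is routine and you have correctly identified it as the load-bearing step.
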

Note that specializing Theorem \ref{thm:mainB} to $\ell=1$ and $\mu=0$ recovers the result presented in \cite[Theorem 3.1]{CHI}, which shows that the cardinality of the associated Weyl alternation set is given by a Fibonacci number. Extending our results to the remaining pairs of weights listed in List A and List B is much more difficult since the Weyl group action on the highest weight of these representations is not as straightforward to describe. 
The complications that arise in describing these Weyl alternation sets are twofold. First, for a given $\lambda$ there is a finite set of possible weights $\mu$, which are linear combinations of fundamental weights satisfying the needed conditions. Second, whenever $\mu$ includes a fundamental weight with high index, one must consider many more conditions in order to determine whether certain Weyl group elements appear in the corresponding Weyl alternation set. 

In the Lie algebra of type $G_2$, we provide a complete description of the Weyl alternation sets $\A(\lambda,\mu)$  for all pairs of weights $(\lambda,\mu)$ given in \eqref{thm:part3}. In Section \ref{sec:G2}, we present the proof of the following.
\begin{theorem}\label{thm:mainG2}
Let $\lambda=\ell\w_2$ with $\ell\geq 1$ be a weight of the Lie algebra of type $G_2$. If $\mu=m_1\varpi_1+m_2\varpi_2$, where $m_1,m_2\in\mathbb{Z}_{\geq 0}$ and $3\ell-1=2m_1+3m_2$, then 
$\A(\lambda,\mu)=\{1,s_1\}$ when $\lambda\neq 0$ and $\A(0,\mu)=\emptyset$.    \end{theorem}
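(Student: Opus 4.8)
The plan is to apply Kostant's weight multiplicity formula \eqref{mult formula} directly: in type $G_2$ the Weyl group $W$ has only twelve elements, so $\A(\lambda,\mu)$ can be determined by examining each $\sigma\in W$ in turn, and positivity of $\wp$ is unusually easy to detect in this type.

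First I would fix root-system conventions so that all arithmetic is forced. Taking $\alpha_1$ to be the short simple root and $\alpha_2$ the long one, we have $\w_1=2\alpha_1+\alpha_2$, $\w_2=3\alpha_1+2\alpha_2$ (the highest root), $\rho=\w_1+\w_2=5\alpha_1+3\alpha_2$, and $\Phi^+=\{\alpha_1,\alpha_2,\alpha_1+\alpha_2,2\alpha_1+\alpha_2,3\alpha_1+\alpha_2,3\alpha_1+2\alpha_2\}$. The one crucial observation is: because $\alpha_1,\alpha_2\in\Phi^+$ and every positive root is a nonnegative integer combination of $\alpha_1$ and $\alpha_2$, for any $\xi$ in the root lattice one has $\wp(\xi)>0$ if and only if $\xi=a\alpha_1+b\alpha_2$ with $a,b\in\mathbb{Z}_{\geq 0}$. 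Since the root and weight lattices coincide in type $G_2$, every vector $\sigma(\lambda+\rho)-(\mu+\rho)$ lies in the root lattice, so $\sigma\in\A(\lambda,\mu)$ reduces to checking that both $\alpha$-coordinates of $\sigma(\lambda+\rho)-(\mu+\rho)$ are nonnegative.

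Next I would assemble the data. Here $\lambda+\rho=\w_1+(\ell+1)\w_2$ and $\mu+\rho=(m_1+1)\w_1+(m_2+1)\w_2$, and the hypothesis $3\ell-1=2m_1+3m_2$ together with $m_1\geq 0$ forces $0\leq m_2\leq\ell-1$ and $\ell-m_2$ odd. I would then run through all twelve $\sigma\in W$: compute $\sigma(\lambda+\rho)$ by applying $s_1$ and $s_2$ iteratively in the $\w$-basis (where each reflection acts by a transparent linear rule), subtract $\mu+\rho$, convert to the $\alpha$-basis, and immediately use $2m_1+3m_2=3\ell-1$ to eliminate $m_1$. The expected outcome is that exactly four elements, namely $1,s_1,s_2,s_2s_1$, have nonnegative coefficient of $\alpha_1$ (equal to $1,0,1,0$ respectively), while for the remaining eight this coefficient has the form $-(c\ell+d)$ with $c,d$ positive integers, hence is negative; among the four survivors, the coefficient of $\alpha_2$ equals $(\ell+1-m_2)/2\geq 1$ for $\sigma\in\{1,s_1\}$ but equals $-(\ell+1+m_2)/2<0$ for $\sigma=s_2$ and $-(\ell+3+m_2)/2<0$ for $\sigma=s_2s_1$. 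Hence only $1$ and $s_1$ contribute, giving $\A(\lambda,\mu)=\{1,s_1\}$. The statement $\A(0,\mu)=\emptyset$ is vacuous: when $\ell=0$ the relation $3\ell-1=2m_1+3m_2$ has no solution with $m_1,m_2\in\mathbb{Z}_{\geq 0}$, so there is no admissible pair $(0,\mu)$.

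The main obstacle is purely organizational: the cancellations driven by $3\ell-1=2m_1+3m_2$ threaten to dissolve into unwieldy case work if one computes in the root basis from the start. Performing all reflections in the $\w$-basis, converting to the $\alpha$-basis only at the final step, and eliminating $m_1$ at once is what makes the sign of each of the twelve vectors immediate, and hence the enumeration of $\A(\lambda,\mu)$ routine.
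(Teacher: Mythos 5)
Your plan is correct and is essentially the paper's own proof: both enumerate all twelve elements of the $G_2$ Weyl group, write $\sigma(\lambda+\rho)-(\mu+\rho)$ in the simple-root basis (using that $\wp(\xi)>0$ exactly when both coordinates are nonnegative integers), and find that every $\sigma$ other than $1$ and $s_1$ produces a negative coordinate, with the $\ell=0$ case vacuous; your elimination of $m_1$ is only cosmetically different from the paper's parametrization $m_1=3n+1$, $m_2=\ell-1-2n$. One remark: your value $0\cdot\alpha_1-\frac{\ell+3+m_2}{2}\,\alpha_2=(n-\ell-1)\alpha_2$ for $s_2s_1$ is the correct one --- the paper's displayed $3\alpha_1+(n-\ell+2)\alpha_2$ appears to be a computational slip (for $\ell\leq 2$ it would wrongly place $s_2s_1$ in $\A(\lambda,\mu)$), though the theorem itself is unaffected since the true value still has a negative $\alpha_2$-coordinate.
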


Our last set of results is concerned with the $q$-multiplicity for pairs of weights $(\lambda,\mu)$ in the lists provided above. To make this precise, recall that for a weight  $\xi$, the value of the $q$-analog of Kostant's partition function is the polynomial $\wp_q(\xi)=c_0+c_1q+\cdots+c_k q^k$, where $c_j$ denotes number of ways to write $\xi$ as a nonnegative integral sum of exactly $j$ positive roots. Then the $q$-analog of Kostant's weight multiplicity formula is defined, in \cite{LL}, as:
\begin{center}
$m_q(\lambda,\mu)=\sum\limits_{\sigma\in W}^{}(-1)^{\ell(\sigma)}\wp_q(\sigma(\lambda+\rho)-(\mu+\rho))$.
\end{center}
We provide the $q$-multiplicity for the pairs of weights considered in Theorems \ref{thm:mainA}, \ref{thm:mainB}, and \ref{thm:mainG2}. We state these results below and provide their proof in Section~\ref{sec:qanalog}.
\begin{theorem}\label{thm:qmainA}
In type $A_r$ with $r\geq 1$ and $\ell\in\mathbb{Z}_+$:
\begin{enumerate}
    \item If $\lambda=\ell\varpi_1$, $\mu=\sum_{1\leq i\leq r}m_i\varpi_i$, where $m_i\in\mathbb{Z}_+$ and $(\ell-\sum_{1\leq i\leq r} im_i)=0$, $m_j=0$ for all $3\leq j\leq r$, then 
$m_q(\lambda,\mu)=q^{m_2}$.
\item If $\lambda=\ell\varpi_1$, $\mu=\sum_{1\leq i\leq r}m_i\varpi_i$, where $m_i\in\mathbb{Z}_+$ and $(\ell-\sum_{1\leq i\leq r} im_i)= 0$ and $m_j=0$ for all $4\leq j\leq r$ and $m_3\neq 0$, then 
$m_q(\lambda,\mu)=q^{m_2+3m_3}$.
\item If $\lambda=\ell\varpi_1$, $\mu=\sum_{1\leq i\leq r}m_i\varpi_i$, where $m_i\in\mathbb{Z}_+$ and $(\ell-\sum_{1\leq i\leq r} im_i)= 0$, $m_j=0$ for all $5\leq j\leq r$ and $m_4\neq 0$, then 
$m_q(\lambda,\mu)=q^{m_2+3m_3+6}$ when $m_4=1$ and $m_q(\lambda,\mu)=q^{m_2+3m_3+6m_4}$ when $m_4\ge 2$.
\end{enumerate}
\end{theorem}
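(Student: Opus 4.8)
The plan is to feed the explicit Weyl alternation sets of Theorem~\ref{thm:mainA} into the $q$-analog of Kostant's formula and evaluate the resulting finite alternating sum of $q$-partition-function values. By Theorem~\ref{thm:mainA} the sum $m_q(\lambda,\mu)=\sum_{\sigma\in W}(-1)^{\ell(\sigma)}\wp_q(\sigma(\lambda+\rho)-(\mu+\rho))$ collapses to a sum over $\A(\lambda,\mu)$, which is $\{1\}$ in case (1), $\{1,s_2\}$ in case (2), and a four- or six-element subset of $\langle s_2,s_3\rangle$ in case (3) according to whether $m_4=1$ or $m_4\ge 2$. So it suffices to compute $\wp_q(\sigma(\lambda+\rho)-(\mu+\rho))$ for these finitely many $\sigma$ and add up with the signs $(-1)^{\ell(\sigma)}$.

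First I would express each weight $\sigma(\lambda+\rho)-(\mu+\rho)$ in the simple-root basis. Since $\lambda=\ell\varpi_1$ and $\rho=\sum_i\varpi_i$, we have $\langle\lambda+\rho,\alpha_i^\vee\rangle=1$ for every $i\ge 2$, so iterating $s_i(\nu)=\nu-\langle\nu,\alpha_i^\vee\rangle\alpha_i$ over the short words occurring in $\A(\lambda,\mu)$ (all products of $s_2,s_3$, using $s_2\alpha_3=\alpha_2+\alpha_3$ and $s_3\alpha_2=\alpha_2+\alpha_3$) yields $s_2\mapsto(\lambda-\mu)-\alpha_2$, $s_3\mapsto(\lambda-\mu)-\alpha_3$, $s_2s_3\mapsto(\lambda-\mu)-2\alpha_2-\alpha_3$, $s_3s_2\mapsto(\lambda-\mu)-\alpha_2-2\alpha_3$, and $s_2s_3s_2\mapsto(\lambda-\mu)-2\alpha_2-2\alpha_3$. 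For $\lambda-\mu$ itself I would use the identity $i\varpi_1-\varpi_i=\sum_{j=1}^{i-1}(i-j)\alpha_j$ together with the defining relation $\ell=\sum_i im_i$ to get $\lambda-\mu=\sum_i m_i(i\varpi_1-\varpi_i)=\sum_{j\ge 1}\big(\sum_{i>j}(i-j)m_i\big)\alpha_j$, which under the hypotheses of cases (1), (2), (3) is supported on $\{\alpha_1\}$, $\{\alpha_1,\alpha_2\}$, $\{\alpha_1,\alpha_2,\alpha_3\}$ respectively. In particular every weight whose $\wp_q$ we need lies in a subsystem of rank at most $3$, so its $\wp_q$-value does not depend on $r$.

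Next I would evaluate $\wp_q$ of each such weight directly from the combinatorial definition, using $\wp_q(a\alpha_1)=q^a$, the rank-two formula $\wp_q(a\alpha_1+b\alpha_2)=q^{\max(a,b)}(1+q+\cdots+q^{\min(a,b)})$ (obtained by summing over the multiplicity of $\alpha_1+\alpha_2$), and the analogous rank-three expression, a nested sum over the multiplicities of the non-simple positive roots $\alpha_1+\alpha_2$, $\alpha_2+\alpha_3$, $\alpha_1+\alpha_2+\alpha_3$. Substituting the coordinates from the previous step makes each term an explicit polynomial in $q$. In case (1) the sum is the single term $\wp_q(m_2\alpha_1)=q^{m_2}$. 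In case (2) the two terms are $q^{m_2+2m_3}(1+\cdots+q^{m_3})$ and $q^{m_2+2m_3}(1+\cdots+q^{m_3-1})$, whose difference is $q^{m_2+3m_3}$. In case (3), writing $A=\sum_{i>1}(i-1)m_i$, $B=\sum_{i>2}(i-2)m_i$ and $C=m_4$, the six signed terms are $\pm\,\wp_q\big(A\alpha_1+(B-\epsilon_2)\alpha_2+(C-\epsilon_3)\alpha_3\big)$ with $(\epsilon_2,\epsilon_3)$ running over $(0,0),(1,0),(0,1),(2,1),(1,2),(2,2)$ and signs $+,-,-,+,+,-$; when $m_4=1$ the last two carry $\wp_q$ of a weight with $\alpha_3$-coordinate $-1$ and hence vanish (recovering the four-element alternation set of Theorem~\ref{thm:mainA}(4)), and in both cases the surviving nested geometric sums should telescope to the single monomial $q^{m_2+3m_3+6m_4}$, which equals $q^{m_2+3m_3+6}$ when $m_4=1$.

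I expect the main obstacle to be precisely this last computation: case (3) with $m_4\ge 2$, where the support is genuinely rank three, the $q$-partition function is a multi-parameter nested sum, and one must carefully track which ranges of summation survive each coordinate shift $(\epsilon_2,\epsilon_3)$ before the six-term alternating combination collapses to $q^{m_2+3m_3+6m_4}$. The remaining cases are routine manipulations of geometric series, and a convenient consistency check throughout is that every weight produced in the first step has nonnegative simple-root coordinates, as it must, since the corresponding $\sigma$ lies in $\A(\lambda,\mu)$.
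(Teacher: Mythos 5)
Your proposal follows the paper's proof essentially verbatim: it restricts the alternating sum to the Weyl alternation sets of Theorem \ref{thm:mainA}, converts each $\sigma(\lambda+\rho)-(\mu+\rho)$ to simple-root coordinates (your coordinates and signs all agree with the paper's, including the observation that the $s_3s_2$ and $s_2s_3s_2$ terms vanish when $m_4=1$, which lets you treat the two subcases of part (3) uniformly), and evaluates $\wp_q$ via the rank-two geometric-series formula and the nested triple sum of Proposition \ref{prop:q}. The one caveat is that the crux of case (3) for $m_4\ge 2$ --- verifying that the six-term alternating combination of triple nested sums really collapses to $q^{m_2+3m_3+6m_4}$ --- is left as an expectation (``should telescope'') rather than carried out, and this is exactly where the paper spends most of its effort, splitting the $d$-sum at $d=m_3+m_4$ and applying geometric-sum identities to each of the six terms before combining them.
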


\begin{theorem}\label{thm:qmainB}
In type $B_r$  with $r\geq 3$:
\begin{enumerate}
    \item If  $\ell$ is a positive odd integer and $k\in \ZZ_{\geq 0}$, then   $m_q(\ell\w_1,(\ell-1-4k)\w_1+2k\w_2)=q^{2k+r}$.
    \item If $\ell, r \in 2\ZZ_{\ge 0}$ under the condition $\ell-1=\sum_{1\le i< r} im_i + \frac{rm_r}{2}$ with $m_i\in 2\ZZ_{\ge 0}$, then $m_q(\ell\w_1,\mu)=0$.
    \item  If $\ell$ is a positive odd integer and $j,k\in \ZZ_{\geq 0}$ with $k\neq 0$, then $m_q(\ell\w_1,(\ell-1-4j-6k)\w_1+2j\w_2+2k\w_3)=q^{r+2j+6k-2}$.
\end{enumerate} 
\end{theorem}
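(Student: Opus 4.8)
The plan is to feed the explicit descriptions of the Weyl alternation sets from Theorem~\ref{thm:mainB} into the $q$-analog of Kostant's weight multiplicity formula. Since $\wp_q(\xi)$ has nonnegative coefficients summing to $\wp(\xi)$, the vanishing $\wp(\xi)=0$ forces $\wp_q(\xi)=0$, so the $q$-multiplicity is a sum over $\A(\lambda,\mu)$ alone:
\[
m_q(\lambda,\mu)=\sum_{\sigma\in\A(\lambda,\mu)}(-1)^{\ell(\sigma)}\,\wp_q\bigl(\sigma(\lambda+\rho)-(\mu+\rho)\bigr).
\]
Part~(2) is then immediate: Theorem~\ref{thm:mainB}(2) gives $\A(\ell\w_1,\mu)=\emptyset$, hence $m_q(\ell\w_1,\mu)=0$.

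For parts~(1) and~(3), every $\sigma$ that occurs is a product of reflections $s_i$ with $i\ge 2$, so $\sigma$ fixes $\varpi_1$ and $\sigma\lambda=\lambda$; thus $\sigma(\lambda+\rho)-(\mu+\rho)=(\lambda-\mu)-(\rho-\sigma\rho)$. Using $s_j\rho=\rho-\alpha_j$ and the commutations among non-adjacent reflections, $\rho-\sigma\rho$ is an explicit nonnegative sum of simple roots: it is $\sum_{i\in S}\alpha_i$ for an antichain product $\sigma=\prod_{i\in S}s_i$, and $2\alpha_2+\alpha_3+\sum_j\alpha_{i_j}$ for the elements $s_{i_1}\cdots s_{i_n}s_2s_3$ of part~(3). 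Expanding $\lambda-\mu$ in the simple root basis of $B_r$ (with $\varpi_i=e_1+\cdots+e_i$ for $i<r$ and $e_m=\alpha_m+\cdots+\alpha_r$) then presents each $\sigma(\lambda+\rho)-(\mu+\rho)$ as a vector whose $\alpha_1$-coefficient grows linearly in the parameters, with the remaining coefficients in $\{0,1\}$ in part~(1) and controlled by the parameters at the first few indices in part~(3); the subcase $r=3$ of part~(3), where $\varpi_3$ is a spin weight, is handled separately.

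The next step is a lemma evaluating $\wp_q$ on vectors of this shape. The structural fact used is that every positive root of $B_r$ is supported on a contiguous block of simple roots with a (possibly empty) doubled tail reaching $\alpha_r$; in particular every positive root whose support meets $\alpha_1$ has $\alpha_1$-coefficient exactly $1$. So to write $c\alpha_1+\sum_{i\ge2}c_i\alpha_i$ as a sum of positive roots one uses exactly $c$ roots through $\alpha_1$, and because the $c_i$ are small, at most one of these may extend beyond $\alpha_1$ while the rest must equal $\alpha_1$. This reduces $\wp_q$ of the whole vector to a short sum, over the finitely many choices of the single root through $\alpha_1$, of $\wp_q$ of a residual vector supported on $\{\alpha_2,\dots,\alpha_r\}$ and of the same restricted shape; iterating yields a closed formula (essentially a product of $q$-integers times a power of $q$) for every $\wp_q\bigl(\sigma(\lambda+\rho)-(\mu+\rho)\bigr)$.

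It remains to assemble the alternating sum. In part~(1) the vector attached to $\sigma=1$ is $\lambda-\mu$, of height $2k+r$, and it strictly exceeds every other $\nu_\sigma$ in height, so the monomial $q^{2k+r}$ appears in $m_q$ with coefficient $1$ (from the unique all-simple-roots decomposition) and cannot cancel; the remaining lower-degree contributions then cancel, which one verifies by organizing the antichains of $\{2,\dots,r\}$ along the recursion behind their Fibonacci count, equivalently via a sign-reversing involution on the non-surviving terms. Part~(3) follows the same scheme but is the crux of the argument: Theorem~\ref{thm:mainB}(3) supplies only a superset of the alternation set, the residual partition functions now carry the extra $\varpi_2,\varpi_3$ data, and the two families (pure antichain products and the $s_{i_1}\cdots s_{i_n}s_2s_3$'s) must be summed together before the collapse to the single monomial asserted in the statement becomes apparent. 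As a safeguard and an alternative to checking the cancellation directly, one may use the positivity of the coefficients of $m_q(\lambda,\mu)$ for $\mu$ dominant: since $m_q(\lambda,\mu)\big|_{q=1}=m(\lambda,\mu)$ is $1$ in cases~(1) and~(3) and $0$ in case~(2) by~\cite{BZ}, a nonnegative polynomial equal to $1$ (resp.\ $0$) at $q=1$ must be a single power of $q$ (resp.\ zero), so the only remaining task is to identify the exponent, which the degree/lowest-degree analysis above pins down.
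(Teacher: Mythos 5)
Your overall framework coincides with the paper's: reduce the sum to $\A(\lambda,\mu)$ (so part~(2) is immediate from $\A=\emptyset$), compute each $\wp_q(\sigma(\lambda+\rho)-(\mu+\rho))$ from the structure of the positive roots of $B_r$, and assemble the alternating sum. But the decisive step --- the cancellation --- is precisely what you do not carry out. The paper does it by splitting the nonconsecutive products according to whether they contain $s_r$ (and, in part~(3), whether they contain $s_2$, $s_3$, $s_2s_3$, or none), using the disjoint-support factorization and Lemmas~\ref{lem1:qversiontypeB}--\ref{lem:qversiontypeB} to get $\wp_q(\tau)=q^{2k}q^{1+m}(1+q)^{r-2-2m}$ (resp.\ $q^{2k}q^{1+m}(1+q)^{r-1-2m}$), counting the length-$m$ antichains by $\binom{r-2-m}{m}$ (resp.\ $\binom{r-1-m}{m}$), and invoking the identity $\sum_{m}(-1)^m\binom{r-m}{m}q^{1+m}(1+q)^{r-2m}=\sum_{i=1}^{r+1}q^i$; the single monomial comes from the difference of the two resulting geometric sums. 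Your ``sign-reversing involution on the non-surviving terms'' is never constructed, and nothing in the write-up substitutes for this computation.

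Your positivity shortcut is a genuinely different route, but note that nonnegativity of the coefficients of $m_q(\lambda,\mu)$ for dominant $\mu$ is a deep external theorem (Kato's identification of $m_q$ with affine Kazhdan--Lusztig polynomials plus KL positivity), nowhere cited or used in this paper; if you invoke it you must say so, and you should observe that it immediately yields Conjecture~\ref{con:powerofq} in full, which the authors state they cannot prove. Granting it, your argument for part~(1) is complete and clean: $\lambda-\mu=(2k+1)\a_1+\a_2+\cdots+\a_r$ has height $2k+r$, every $\sigma\neq 1$ contributes only in strictly lower degree, so the coefficient of $q^{2k+r}$ in $m_q$ is $1$, and a nonnegative polynomial summing to $1$ at $q=1$ must then equal $q^{2k+r}$.

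For part~(3), however, the same degree analysis defeats your plan rather than completing it. There $\lambda-\mu=(2j+4k+1)\a_1+(2k+1)\a_2+\a_3+\cdots+\a_r$ has height $r+2j+6k$, and again only $\sigma=1$ contributes in that degree, so $[q^{r+2j+6k}]\,m_q=1$; with positivity this forces $m_q=q^{r+2j+6k}$, which is \emph{not} the exponent $r+2j+6k-2$ you are asked to prove. The appendix tables side with the degree count (e.g.\ $B_4$, $\ell=7$, $\mu=2\w_3$, i.e.\ $j=0$, $k=1$, is listed as $q^{10}=q^{r+2j+6k}$, whereas the stated formula gives $q^{8}$). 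So your assertion that ``the degree/lowest-degree analysis pins down the exponent'' is untenable for part~(3) as stated: carried out honestly, your method produces a different exponent from the theorem, and you would have had to either locate an error in your own argument or flag the exponent in the statement. As written, the proposal does not establish part~(3).
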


\begin{theorem}\label{thm:qmainG2}
Let $\lambda=\ell\w_2$ with $\ell\geq 1$ be a weight of $G_2$. If $\mu=m_1\varpi_1+m_2\varpi_2$, where $m_1,m_2\in\mathbb{Z}_{\geq 0}$ and $3\ell-1=2m_1+3m_2$, then 
$m_q(\lambda,\mu)=q^{n+2}$ where $m_1=3n+1$.    
\end{theorem}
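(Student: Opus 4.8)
The plan is to derive this from Theorem~\ref{thm:mainG2}. Since $\ell\geq 1$ we have $\lambda=\ell\w_2\neq 0$, so that theorem gives $\A(\lambda,\mu)=\{1,s_1\}$; in the $q$-analog of Kostant's weight multiplicity formula every $\sigma\notin\{1,s_1\}$ then contributes $\wp_q(\sigma(\lambda+\rho)-(\mu+\rho))=0$, the corresponding $\wp$-value being zero. As $1$ has length $0$ and $s_1$ has length $1$, this leaves
\[
m_q(\lambda,\mu)=\wp_q\big((\lambda+\rho)-(\mu+\rho)\big)-\wp_q\big(s_1(\lambda+\rho)-(\mu+\rho)\big),
\]
so the problem reduces to evaluating the $q$-analog of Kostant's partition function for $G_2$ at these two weights.

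First I would unwind the hypothesis $3\ell-1=2m_1+3m_2$. Reducing modulo $3$ gives $m_1\equiv 1\pmod 3$, so writing $m_1=3n+1$ with $n\in\ZZ_{\geq 0}$ forces $\ell=2n+1+m_2$ (in particular $\ell\geq 1$ is automatic, and $m_2=\ell-2n-1$). Next, using the convention in which $\alpha_1$ is the short simple root of $G_2$, so that $\w_1=2\alpha_1+\alpha_2$, $\w_2=3\alpha_1+2\alpha_2$ and $\rho=\w_1+\w_2=5\alpha_1+3\alpha_2$, I would expand $\lambda-\mu$ in the simple-root basis: its $\alpha_1$-coefficient is $3\ell-2m_1-3m_2=1$ by hypothesis, and substituting $\ell=2n+1+m_2$ and $m_1=3n+1$ shows its $\alpha_2$-coefficient is $n+1$, so $(\lambda+\rho)-(\mu+\rho)=\alpha_1+(n+1)\alpha_2$. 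Since $\lambda+\rho=\w_1+(\ell+1)\w_2$ has $\w_1$-coordinate $1$, we get $\langle\lambda+\rho,\alpha_1^\vee\rangle=1$, hence $s_1(\lambda+\rho)=(\lambda+\rho)-\alpha_1$ and therefore $s_1(\lambda+\rho)-(\mu+\rho)=(n+1)\alpha_2$.

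Finally I would read off the two partition-function values from the positive roots $\alpha_1,\ \alpha_2,\ \alpha_1+\alpha_2,\ 2\alpha_1+\alpha_2,\ 3\alpha_1+\alpha_2,\ 3\alpha_1+2\alpha_2$. Because $\alpha_2$ is the only positive root with vanishing $\alpha_1$-coefficient, $(n+1)\alpha_2$ decomposes uniquely as $n+1$ copies of $\alpha_2$, giving $\wp_q((n+1)\alpha_2)=q^{n+1}$. Likewise any decomposition of $\alpha_1+(n+1)\alpha_2$ must use exactly one positive root of $\alpha_1$-coefficient $1$, namely $\alpha_1$ (together with $n+1$ copies of $\alpha_2$, for $n+2$ roots) or $\alpha_1+\alpha_2$ (together with $n$ copies of $\alpha_2$, for $n+1$ roots); hence $\wp_q(\alpha_1+(n+1)\alpha_2)=q^{n+2}+q^{n+1}$. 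Subtracting yields $m_q(\lambda,\mu)=(q^{n+2}+q^{n+1})-q^{n+1}=q^{n+2}$.

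I do not anticipate a genuine obstacle once Theorem~\ref{thm:mainG2} is in hand: the argument is short and elementary. The only points requiring care are the bookkeeping of the $G_2$ root-system normalization (which simple root is short, and the consequent formulas for $\w_1,\w_2,\rho$) and the clean cancellation by the $s_1$-term of the ``extra'' monomial $q^{n+1}$ arising from the identity's partition-function value, which is precisely what collapses the answer to the single monomial $q^{n+2}$. As a sanity check, setting $q=1$ recovers $m(\lambda,\mu)=1$, consistent with the Berenshtein--Zelevinskii classification.
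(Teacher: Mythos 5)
Your proposal is correct and follows essentially the same route as the paper: restrict the sum to $\A(\lambda,\mu)=\{1,s_1\}$ via Theorem \ref{thm:mainG2}, compute $(\lambda+\rho)-(\mu+\rho)=\a_1+(n+1)\a_2$ and $s_1(\lambda+\rho)-(\mu+\rho)=(n+1)\a_2$, and subtract $\wp_q((n+1)\a_2)=q^{n+1}$ from $\wp_q(\a_1+(n+1)\a_2)=q^{n+2}+q^{n+1}$. The root-system bookkeeping and partition-function evaluations all match the paper's argument.
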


Section \ref{sec:future} ends the manuscript by providing ample evidence for the following.
\begin{conjecture}\label{con:powerofq}
If $\lambda,\mu$ are a pair of weights for which $m(\lambda,\mu)=1$, then $m_q(\lambda,\mu)=q^{f(r)}$, where $f(r)$ is a function of $r$, the rank of the Lie algebra.
\end{conjecture}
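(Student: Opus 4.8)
The plan is to combine a soft positivity input, which forces the \emph{shape} $q^{f}$, with the explicit Weyl alternation set computations of this paper, suitably extended, which would pin down the exponent $f$.

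First I would reduce to the Berenshtein--Zelevinskii list: every multiplicity-one pair $(\lambda,\mu)$ belongs, up to Dynkin diagram isomorphism, to one of the families of List A, List B, or List G, and in each of these families $\mu$ is written as a nonnegative integral combination of fundamental weights, hence is \emph{dominant}. For $\lambda,\mu\in P_+(\fg)$ the polynomial $m_q(\lambda,\mu)$ is Lusztig's $q$-analog of weight multiplicity, and a now-established positivity theorem --- obtained by identifying $m_q(\lambda,\mu)$, up to normalization, with the Poincar\'e polynomial of the stalk of $\mathrm{IC}(\overline{\mathrm{Gr}^{\lambda}})$ at a point of $\mathrm{Gr}^{\mu}$ via geometric Satake (alternatively, Lusztig's original reduction to affine Hecke algebras and Kazhdan--Lusztig polynomials) --- gives $m_q(\lambda,\mu)\in\ZZ_{\geq 0}[q]$. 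Since $m_q(\lambda,\mu)|_{q=1}=m(\lambda,\mu)=1$, a polynomial with nonnegative integer coefficients whose coefficients sum to $1$ must equal $q^{f}$ for a unique integer $f\geq 0$. This would establish the form predicted by the conjecture for \emph{all} simple $\fg$ and all multiplicity-one pairs; what remains is to compute $f$, which in general depends on $\lambda$ and $\mu$ as well as on $r$ (as Theorems~\ref{thm:qmainA}--\ref{thm:qmainG2} already show), so the substantive content is the monomiality together with an explicit formula for the exponent.

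Second, to identify $f$ I would proceed family by family along the lines of Sections~\ref{sec:A}--\ref{sec:G2}. For each pair one needs: (i) a complete description of $\A(\lambda,\mu)$, extending Theorems~\ref{thm:mainA}, \ref{thm:mainB}, and \ref{thm:mainG2} to the remaining weights $\mu$ (those supported on fundamental weights of higher index) and to the exceptional types; (ii) a closed form for $\wp_q\bigl(\sigma(\lambda+\rho)-(\mu+\rho)\bigr)$ for each $\sigma\in\A(\lambda,\mu)$, tractable because these vectors have very restricted support, so $\wp_q$ can be evaluated directly or by a short generating-function argument; and (iii) the alternating sum $\sum_{\sigma}(-1)^{\ell(\sigma)}\wp_q\bigl(\sigma(\lambda+\rho)-(\mu+\rho)\bigr)$, which by the first step is guaranteed to collapse to a monomial, so only bookkeeping is needed to read off $f$. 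In type $A_r$ one can bypass (i)--(iii): here $m_q(\ell\varpi_1,\mu)$ is the Kostka--Foulkes polynomial $K_{(\ell),\widetilde\mu}(q)$ of the partition $\widetilde\mu$ underlying $\mu$, and $K_{(\ell),\nu}(q)=q^{n(\nu)}$ with $n(\nu)=\sum_{i}(i-1)\nu_i$; translating through $\varpi_i\mapsto\epsilon_1+\cdots+\epsilon_i$ gives
\[
f=\sum_{i}\binom{i}{2}m_i+\frac{r}{2}\Bigl(\ell-\sum_{i}im_i\Bigr),
\]
which recovers Theorem~\ref{thm:qmainA} in the cases $\ell=\sum_i im_i$ treated there and extends it to all of List A. Types $B$, $C$, $D$ admit analogous, if less uniform, combinatorial models (Lecouvey-type Kostka--Foulkes polynomials, Littelmann paths), and the handful of exceptional pairs can be settled by direct computation with the root data.

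The main obstacle is step (i) in general --- precisely the difficulty flagged after Theorem~\ref{thm:mainB}. As $\mu$ acquires fundamental weights of larger index, the number of inequalities controlling membership in $\A(\lambda,\mu)$ grows, and the alternation sets themselves acquire genuine structure (the split $m_4=1$ versus $m_4\geq 2$ in Theorem~\ref{thm:mainA}(4) is an early symptom). I expect a uniform treatment to require either a transfer-matrix/generating-function encoding of $\A(\lambda,\mu)$, in the spirit of the Fibonacci phenomena already established, or an induction on the rank $r$ that removes the last node of the Dynkin diagram. A complementary, more conceptual route to the exponent is furnished by the geometry of the first step: when $m(\lambda,\mu)=1$ the IC stalk at $\mathrm{Gr}^{\mu}$ is one-dimensional and concentrated in a single cohomological degree, so $f$ can in principle be read off from the local intersection-cohomology data of $\overline{\mathrm{Gr}^{\lambda}}$ near $\mathrm{Gr}^{\mu}$; making this explicit family by family would give a type-uniform formula for $f$ and, with it, a proof of the conjecture.
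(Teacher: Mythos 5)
The paper does not actually prove Conjecture \ref{con:powerofq}: it is left as an open problem, the authors explicitly state that their techniques ``do not lend themselves well to provide a direct proof,'' and the only support given is the case-by-case verification in Theorems \ref{thm:qmainA}--\ref{thm:qmainG2} plus the tables of Appendix \ref{app:tables}. Your first step is therefore a genuinely different route, and it settles the substantive content of the conjecture --- that $m_q(\lambda,\mu)$ collapses to a single power of $q$ --- in one stroke: for $\lambda,\mu$ dominant, Lusztig's identification of $m_q(\lambda,\mu)$ with an affine Kazhdan--Lusztig polynomial (equivalently, with the Poincar\'e polynomial of an intersection-cohomology stalk on the affine Grassmannian; see \cite{LL} and Kato's theorem) gives $m_q(\lambda,\mu)\in\ZZ_{\geq 0}[q]$, and a polynomial with nonnegative integer coefficients summing to $1$ at $q=1$ is a monomial. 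Since every pair on the Berenshtein--Zelevinskii list has dominant $\mu$, this covers all families the paper treats and the ones it cannot reach (higher-index fundamental weights in type $B$, types $C$, $D$, and the exceptional algebras), whereas the paper's elementary method needs a complete description of $\A(\lambda,\mu)$ and an explicit evaluation of each $\wp_q$, which is precisely what becomes intractable beyond Theorems \ref{thm:mainA}--\ref{thm:mainG2}. What the paper's approach buys in exchange is the explicit exponent and the combinatorics of the alternation sets (the Fibonacci counts), which the positivity argument does not see. Your type-$A$ identification $m_q(\ell\varpi_1,\mu)=K_{(\ell),\widetilde\mu}(q)=q^{n(\widetilde\mu)}$ with $n(\widetilde\mu)=\sum_i(i-1)\widetilde\mu_i$ is correct and reproduces Theorem \ref{thm:qmainA} and the appendix tables, including the term $\tfrac{r}{2}\bigl(\ell-\sum_i im_i\bigr)$ coming from the shift by $p(1,\dots,1)$.

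Two caveats you should make explicit. First, the conjecture as written asserts the exponent is a function of the rank alone; this is false on its face (the paper's own exponents $q^{m_2}$, $q^{m_2+3m_3}$, $q^{2k+r}$ depend on $\mu$), so the statement must be read, as you do, as ``$m_q(\lambda,\mu)$ is a power of $q$,'' with exponent depending on $(\lambda,\mu)$. Second, positivity of Lusztig's $q$-analog is a theorem only for dominant $\mu$; the hypothesis ``$m(\lambda,\mu)=1$'' also admits the non-dominant Weyl translates of the listed weights, for which $m_q(\lambda,\mu)$ is not Weyl-invariant and the positivity input does not directly apply, so either restrict the claim to dominant $\mu$ (the natural reading, given the paper's evidence) or supply a separate argument there. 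With those provisos your step (1) is a complete proof of the monomiality, and only the explicit determination of the exponent in the remaining families --- which is not needed for the conjecture itself --- remains open, for exactly the reasons the authors flag after Theorem \ref{thm:mainB}.
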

This conjecture is rather curious, as there are infinite ways that the $q$-multiplicity when evaluated at $q=1$ would yield a value of one. Yet what we see is that the only term remaining, after a tremendous amount of (polynomial) cancellation, is simply a power of $q$. Unfortunately, the techniques we present do not lend themselves well to provide a direct proof of the above conjecture, and we would welcome a new approach to such a problem. 

Lastly, in Section \ref{sec:future}, we present a new direction for research related to a construction of a poset we call the \emph{Weyl alternation poset}. This poset is created by computing all Weyl alternation sets $\A(\lambda,\mu)$ of a specified simple Lie algebra, and considering the set containment of these Weyl alternation sets. We present the cases of the Lie algebras of types $A_2$, $B_2$, and $G_2$, in which we fix $\mu=0$ and vary $\lambda$ in the integral weight lattice.
Studying these posets would be of interest in general, but would require techniques beyond the scope presented here.

\section{Type \texorpdfstring{$A$}{A}}\label{sec:A}
In this section, we consider the Lie algebra of type $A_r$ for  $r\geq 1$. The set of simple roots is given by $\Delta=\{\alpha_1,\alpha_{2}, \cdots, \alpha_{r}\}$,
and the set of positive roots is given by
\[
\Phi^+ = \Delta \cup \{\alpha_i+\alpha_{i+1}+\cdots+\alpha_j: 1 \leq i < j \leq r\}.
\]
For $1 \leq i \leq r$, the fundamental weights are defined  by \[\varpi_i = \frac{r+1-i}{r+1}(\alpha_1+2\alpha_2+\cdots+(i-1)\alpha_{i-1}) +\frac{i}{r+1}((r-i+1)\alpha_i+(r-i)\alpha_{i+1}+\cdots+\alpha_r). \]
The weight $\rho$ is defined as the half sum of the positive roots, $\rho=\frac{1}{2} \sum_{\alpha\in\Phi^+} \alpha$, which is equivalent to $\rho = \varpi_1+\varpi_2+\cdots+\varpi_r$. As noted previously, the Weyl group elements are generated by reflections about the hyperplanes that lie perpendicular to the simple roots. We denote these simple reflections by $s_i$, where $1 \leq i \leq r$, whose action on the simple roots is defined by
$
s_{i}(\alpha_{j})=\alpha_{j}$ if $|i-j|>1$, $s_{i}(\alpha_{j})=
    -\alpha_{j} $ if $i=j$, and $s_{i}(\alpha_{j})=
    \alpha_{i}+\alpha_{j}$ if $|i-j|=1$. 
We also recall that the Weyl group elements act on the fundamental weights by
\[s_{i}(\varpi_{j})=\begin{cases}\varpi_j-\alpha_i&\mbox{if $i=j$}\\
\varpi_j&\mbox{otherwise.}
\end{cases}
\]

We provide a proof of Theorem \ref{thm:mainA} by establishing the following technical results.

\begin{proposition}
If $\lambda = \ell\w_1$, $\mu=\sum_{1\leq i\leq r}m_i\w_i$, where $\ell, m_i\in\mathbb{Z}_+$ and $(\ell-\sum_{1\leq i\leq r} im_i)=0 \mod (r+1)$, then $s_1\notin \A(\lambda,\mu)$ for any choice of $\ell$ and $r$ in $\mathbb{Z}_+$.
\end{proposition}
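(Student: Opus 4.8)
The plan is to show directly that $\wp\big(s_1(\lambda+\rho)-(\mu+\rho)\big)=0$ by exhibiting a strictly negative coefficient in the expansion of $s_1(\lambda+\rho)-(\mu+\rho)$ in the basis of simple roots; since a weight with a negative simple-root coefficient cannot be a nonnegative $\ZZ$-linear combination of positive roots, this forces $s_1\notin\A(\lambda,\mu)$.

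First I would simplify $s_1(\lambda+\rho)$. Writing $\lambda+\rho=(\ell+1)\varpi_1+\varpi_2+\cdots+\varpi_r$ and using $s_1(\varpi_1)=\varpi_1-\alpha_1$ together with $s_1(\varpi_j)=\varpi_j$ for $j\neq1$, one obtains $s_1(\lambda+\rho)=(\lambda+\rho)-(\ell+1)\alpha_1$, and hence
\[
s_1(\lambda+\rho)-(\mu+\rho)=(\lambda-\mu)-(\ell+1)\alpha_1.
\]

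Next I would track the coefficient of $\alpha_1$ in this weight. From the given formula for the fundamental weights, the coefficient of $\alpha_1$ in $\varpi_i$ is $\tfrac{r+1-i}{r+1}$, which is strictly less than $1$ for every $i$ with $1\le i\le r$. Thus the coefficient of $\alpha_1$ in $\lambda=\ell\varpi_1$ equals $\tfrac{\ell r}{r+1}<\ell+1$, and since each $m_i\ge0$ the coefficient of $\alpha_1$ in $\mu$ is nonnegative; therefore the coefficient of $\alpha_1$ in $\lambda-\mu$ is at most $\tfrac{\ell r}{r+1}<\ell+1$, so the coefficient of $\alpha_1$ in $(\lambda-\mu)-(\ell+1)\alpha_1$ is strictly negative. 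Every positive root of $A_r$ is a nonnegative integer combination of simple roots, so any weight equal to a nonnegative $\ZZ$-linear combination of positive roots has all of its simple-root coefficients nonnegative; since the weight above does not, $\wp\big(s_1(\lambda+\rho)-(\mu+\rho)\big)=0$, i.e.\ $s_1\notin\A(\lambda,\mu)$, for every $\ell,r\in\mathbb{Z}_+$.

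There is no serious obstacle here: the computation of $s_1(\lambda+\rho)$ collapses cleanly precisely because $\lambda$ is a multiple of the single fundamental weight $\varpi_1$, and the sign of the $\alpha_1$-coefficient falls out of the elementary inequality $\tfrac{\ell r}{r+1}<\ell+1$ once one observes that the contribution of $\mu$ can only decrease that coefficient. The congruence hypothesis $(\ell-\sum_i im_i)\equiv0\pmod{r+1}$ is not strictly needed for this conclusion (it merely guarantees that the difference lies in the root lattice, while $\wp$ vanishes in either case); if one wants the exact value, writing $\ell-\sum_i im_i=(r+1)s$ with $s\ge0$ yields $-s-\sum_i m_i-1$ for the $\alpha_1$-coefficient.
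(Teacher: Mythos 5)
Your proof is correct and follows essentially the same route as the paper's: both compute $s_1(\lambda+\rho)-(\mu+\rho)=(\lambda-\mu)-(\ell+1)\alpha_1$ and show the coefficient of $\alpha_1$ is negative (the paper computes it exactly as $-\frac{\ell+r+1+\sum_i m_i(r+1-i)}{r+1}$, while you bound it via $\frac{\ell r}{r+1}<\ell+1$, which agrees with your closing remark giving the exact value $-s-\sum_i m_i-1$). Your observation that the congruence hypothesis is not needed for the vanishing of $\wp$ is accurate and a minor refinement over the paper's presentation.
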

\begin{proof}
Note that $\ell = (r+1)n +\sum im_i$ for some $n\in \ZZ_{\ge 0}$ and that $s_1(\lambda) = s_1(\ell\w_1)=\ell(\w_1 - \a_1)$. Hence,
\begin{align*}
    s_1(\lambda + \rho) - \rho - \mu &= \ell(\w_1 - \a_1)+(\rho-\a_1) - \rho - \mu \\
      &= \frac{\ell }{r+1}(r\a_1+(r-1)\a_2+\cdots+\a_r) - (\ell+1)\a_1 - (m_1\w_1+m_2\w_2+\cdots+m_r\w_r).
\end{align*}
Note that the coefficient of $\a_1$ in the last displayed equation is 
\begin{align*}
    \frac{\ell r}{r+1}-(\ell+1)-\sum_{i=1}^r \frac{m_i(r+1-i)}{r+1} 
      &= -\frac{\ell+r+1+\sum_{i=1}^r m_i(r+1-i)}{r+1}.
\end{align*}
Since $1\le i\le r$ and $\ell$, $r$, and $m_i$ are all nonnegative integers, this coefficient is negative for all choices of $r$ and $\ell$. Thus, $\wp(s_1(\lambda + \rho) - \rho - \mu )=0$ and $s_1\notin\A(\lambda,\mu)$.
\end{proof}

The following result establishes part 2 of Theorem \ref{thm:mainA}.
\begin{proposition}
Consider the Lie algebra of type $A_r$ with $r\geq 1$ and let $\ell\in\mathbb{Z}_+$. If $\lambda=\ell\varpi_1$, $\mu=\sum_{1\leq i\leq r}m_i\varpi_i$, where $m_i\in\mathbb{Z}_+$ and $(\ell-\sum_{1\leq i\leq r} im_i)=0$, $m_j=0$ for all $3\leq j\leq r$, then 
$\A(\lambda,\mu)=\{1\}$.
\end{proposition}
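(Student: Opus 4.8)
The plan is to show that the only Weyl group element $\sigma$ contributing to $m(\lambda,\mu)$ is the identity, by directly analyzing when $\sigma(\lambda+\rho)-(\mu+\rho)$ can be a nonnegative $\ZZ$-linear combination of positive roots. Since $\lambda=\ell\varpi_1$ and $\mu=m_1\varpi_1+m_2\varpi_2$ with $\ell = m_1+2m_2$, the weight $\lambda$ lies on the boundary of the Weyl chamber (it is a multiple of a single fundamental weight), so its stabilizer in $W$ is the parabolic subgroup generated by $s_2,\dots,s_r$. First I would record the base case $\sigma=1$: then $\sigma(\lambda+\rho)-(\mu+\rho)=\lambda-\mu = (\ell-m_1-2m_2)\varpi_1$-type combination, and using the $\ell=m_1+2m_2$ relation one checks that $\lambda-\mu$ is a nonnegative integer combination of the simple roots (in fact of $\alpha_1$ through $\alpha_r$), so $1\in\A(\lambda,\mu)$; this also explains the $q$-exponent later.

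Next I would rule out every non-identity $\sigma$. The key structural observation is that for any $\sigma\neq 1$ there is a simple reflection $s_i$ with $\ell(s_i\sigma)<\ell(\sigma)$, equivalently $\sigma^{-1}(\alpha_i)<0$; I would instead argue on the left, using that any reduced word for $\sigma$ starts with some $s_i$, and split into the case $i=1$ and $i\geq 2$. The case $s_1$ appearing is already handled: the previous proposition shows $s_1\notin\A(\lambda,\mu)$, and more is true — if a reduced expression for $\sigma$ begins with $s_1$, then I would show the coefficient of $\alpha_1$ in $\sigma(\lambda+\rho)-(\mu+\rho)$ is negative, because $s_1(\lambda+\rho)-\rho$ already has a strictly negative $\alpha_1$-coefficient (computed explicitly in the previous proof as $-\frac{\ell+r+1+\sum m_i(r+1-i)}{r+1}$) and the remaining reflections $s_{i_2},s_{i_3},\dots$ in the word only add nonnegative multiples of other simple roots to the $\alpha_1$ slot or leave it unchanged — here one must be careful, as $s_2$ acting later can change the $\alpha_1$-coefficient, so the cleanest route is to instead use the descent on the right: pick $i$ with $\sigma^{-1}(\alpha_i)\in\Phi^-$ and write $\sigma = \sigma' s_i$ with $\ell(\sigma')=\ell(\sigma)-1$, giving $\sigma(\lambda+\rho) = \sigma'(s_i(\lambda+\rho)) = \sigma'(\lambda+\rho-\langle\lambda+\rho,\alpha_i^\vee\rangle\alpha_i)$.

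Carrying out that right-descent analysis: since $\lambda=\ell\varpi_1$, we have $\langle\lambda,\alpha_i^\vee\rangle = \ell\delta_{i,1}$ and $\langle\rho,\alpha_i^\vee\rangle=1$, so $s_i(\lambda+\rho)-(\lambda+\rho) = -(\ell\delta_{i,1}+1)\alpha_i$. For $i\geq 2$ this means $s_i$ fixes $\lambda$ and sends $\lambda+\rho$ to $\lambda+\rho-\alpha_i$; iterating, any $\sigma$ in the stabilizer-coset structure produces $\sigma(\lambda+\rho)-(\lambda+\rho) = -\sum_{j} c_j \alpha_j$ with the $c_j$ controlled, and then $\sigma(\lambda+\rho)-(\mu+\rho) = (\lambda-\mu) - \sum_j c_j\alpha_j$. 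The hard part will be showing that for every $\sigma\neq 1$ at least one simple-root coefficient of this expression is negative; I expect to do this by showing that $\lambda-\mu = \sum_{i=1}^{r} d_i\alpha_i$ has all $d_i$ not too large — specifically, because $\mu$ involves only $\varpi_1,\varpi_2$, the coefficient $d_i$ of $\alpha_i$ in $\lambda-\mu$ decreases (in fact $d_i \le d_{i-1}$ type monotonicity or becomes small for large $i$), so that subtracting even a single $\alpha_i$ (which is forced once any $s_i$, $i\ge 2$, enters the word) drives some coefficient negative for all $i$ beyond a small range, and for the remaining finitely many small cases ($s_2$, $s_3$, etc.) a direct computation using $\ell=m_1+2m_2$ and $m_j=0$ for $j\geq 3$ shows the $\alpha_2$- or $\alpha_1$-coefficient goes negative. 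Combining: $\A(\lambda,\mu)=\{1\}$.

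Thus the main obstacle is bookkeeping the simple-root coordinates of $\sigma(\lambda+\rho)-(\mu+\rho)$ uniformly over all $\sigma\neq 1$; I would organize it by first disposing of any $\sigma$ whose reduced word contains $s_1$ (via the previous proposition's sign computation, suitably extended), and then handling $\sigma$ in the parabolic subgroup $\langle s_2,\dots,s_r\rangle$ by an explicit coordinate argument showing a negative coefficient always appears, using crucially that $\mu$ has support only on $\{\varpi_1,\varpi_2\}$.
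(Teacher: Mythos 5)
Your outline follows the same route as the paper's proof---verify that the identity contributes and that no simple reflection does, then use the weak-order structure to dispose of longer elements---but as written it stops short at exactly the step you label ``the hard part,'' and the plan you sketch for that step would not close the argument. You propose to show that the coefficients $d_i$ in $\lambda-\mu=\sum_i d_i\alpha_i$ are ``not too large'' or satisfy some monotonicity; that is not enough, since subtracting a single $\alpha_j$ from a vector whose coefficients are all positive produces nothing negative. What is actually true, and what the paper computes, is that $\ell=m_1+2m_2$ forces $\lambda-\mu=m_2(2\varpi_1-\varpi_2)=m_2\alpha_1$ exactly, so $d_j=0$ for every $j\ge 2$. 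With that in hand the exclusion of $s_i$ for $i\ge 2$ is the one-line computation $s_i(\lambda+\rho)-\rho-\mu=m_2\alpha_1-\alpha_i$, and more generally any $\sigma\ne 1$ in the stabilizer $\langle s_2,\dots,s_r\rangle$ of $\lambda$ yields $m_2\alpha_1-\sum_{\alpha\in\Phi^+\cap\sigma(\Phi^-)}\alpha$, where the subtracted sum is a nonempty sum of positive roots supported on $\alpha_2,\dots,\alpha_r$; a negative coefficient is immediate. You should make this computation explicit rather than appealing to a qualitative decay of the $d_i$.

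The second loose end is the reduction from arbitrary $\sigma\ne 1$ to simple reflections. You already wrote down the identity that accomplishes this: for a reduced factorization $\sigma=\sigma's_i$ one has $\sigma'(\alpha_i)\in\Phi^+$ and $\langle\lambda+\rho,\alpha_i^\vee\rangle\ge 1$, so $\sigma'(\lambda+\rho)-(\mu+\rho)=\bigl(\sigma(\lambda+\rho)-(\mu+\rho)\bigr)+\langle\lambda+\rho,\alpha_i^\vee\rangle\,\sigma'(\alpha_i)$ is a nonnegative integral combination of positive roots whenever $\sigma(\lambda+\rho)-(\mu+\rho)$ is. Hence $\sigma\in\A(\lambda,\mu)$ implies $\sigma'\in\A(\lambda,\mu)$, and iterating shows that a nonidentity element of $\A(\lambda,\mu)$ would force some simple reflection into $\A(\lambda,\mu)$. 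Since $s_1$ is excluded by the preceding proposition and $s_2,\dots,s_r$ by the computation above, this finishes the proof and removes entirely the need for the uniform bookkeeping over all $\sigma\ne 1$ that you were worried about; the left-descent detour concerning words containing $s_1$, which you correctly flag as problematic, can simply be deleted.
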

\begin{proof} We begin by establishing that under the given conditions, $s_i\notin\A(\lambda,\mu)$ for any $1<i\leq r$. Hence, it suffices to show that if  $1<i\leq r$, then
\[s_i(\lambda+\rho)-\rho-\mu=c_1\a_1+\cdots+c_r\a_r\]
and there exists $1\leq j\leq r$ such that $c_j<0$.
For $1<i\leq r$ note
\begin{align*}
s_i(\lambda+\rho)-\rho-\mu&=\ell\w_1+\rho-\a_i-\rho-\mu\\
&=\frac{2m_2}{r+1}(r\a_1+(r-1)\a_{2}+\cdots+\a_r)-\a_i\\
&\qquad\frac{-m_2}{r+1}((r-1)\a_1)-\frac{2m_2}{r+1}((r-1)\a_2+(r-2)\a_3+\cdots+\a_r)\\
&=m_2\a_1-\a_i.
\end{align*}
Hence, as desired, the coefficient $c_i=-1$ and $s_i\notin\A(\lambda,\mu)$. 
To complete the proof it suffices to note that since $\mu=m_1\w_1+m_2\w_2$, and $\ell=m_1+2m_2$, we have
\begin{align}
    1(\lambda+\rho)-\rho-\mu&=\lambda-\mu=(\ell-m_1)\w_1-m_2\w_2=m_2\a_1.\label{eq:1part2thmA}
\end{align}
Thus $\A(\lambda,\mu)=\{1\}$.
\end{proof}

The following result establishes part 3 of Theorem \ref{thm:mainA}.
\begin{proposition}
Type $A_r$ $(r\geq 1)$: If $\lambda=\ell\varpi_1$, $\mu=\sum_{1\leq i\leq r}m_i\varpi_i$, where $m_i\in\mathbb{Z}_+$ and $(\ell-\sum_{1\leq i\leq r} im_i)= 0$, $m_j=0$ for all $4\leq j\leq r$ and $m_3\neq 0$, then 
$\A(\lambda,\mu)=\{1,s_2\}$.
\end{proposition}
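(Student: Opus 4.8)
The plan is to mimic the structure of the proof of part 2, but now with the extra fundamental weight $\w_3$ contributing to $\mu$. First I would show that the only simple reflections that can possibly lie in $\A(\lambda,\mu)$ are $s_2$, by computing $s_i(\lambda+\rho)-\rho-\mu$ for each $1<i\le r$ and exhibiting a negative coefficient whenever $i\ne 2$. The key computation is that, since $\lambda=\ell\w_1$ with $\ell=m_1+2m_2+3m_3$, the weight $\lambda-\mu$ simplifies (as in Equation~\eqref{eq:1part2thmA}) to a nonnegative combination of $\a_1,\ldots,\a_r$; in fact one should get $\lambda-\mu = (m_2+3m_3)\a_1 + (2m_3)\a_2 + m_3\a_3$ or something of that shape after expanding the fundamental weights, which confirms $1\in\A(\lambda,\mu)$. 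For $s_i$ with $i\ne 2$, applying $s_i$ subtracts $\a_i$ from this expression (since $s_i(\ell\w_1)=\ell\w_1$ for $i\ne 1$, and the $\w_1$ term is unchanged), and because the $\a_i$-coefficient of $\lambda-\mu$ is $0$ for $i\ge 4$ and we already know $s_1\notin\A(\lambda,\mu)$ from the first proposition, each such $s_i$ produces a negative coefficient; the case $i=3$ needs a little care since the $\a_3$-coefficient of $\lambda-\mu$ is $m_3>0$, so one must check that subtracting $\a_3$ still leaves $m_3-1\ge 0$ — wait, that does not immediately fail, so instead I would look at whether $s_3(\lambda+\rho)-\rho-\mu$ has ALL nonnegative coefficients, and if it does, $s_3$ would be in the alternation set, contradicting the claim; so the real check is more subtle and I expect the negative coefficient for $s_3$ to appear via the interaction with $\rho$, namely $s_3(\rho)-\rho=-\a_3$ combined with the precise coefficients.

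Next I would handle $s_2$ itself: compute $s_2(\lambda+\rho)-\rho-\mu = (\lambda-\mu)-\a_2$ and verify that after expanding, all coefficients remain nonnegative integers (using $m_3\ne 0$ crucially, since the $\a_2$-coefficient of $\lambda-\mu$ should be $2m_3\ge 2>0$, so subtracting one $\a_2$ is harmless), hence $\wp(s_2(\lambda+\rho)-\rho-\mu)>0$ and $s_2\in\A(\lambda,\mu)$. This is also where the hypothesis $m_3\ne 0$ earns its keep: if $m_3=0$ we would be back in the situation of part~2 where $s_2\notin\A(\lambda,\mu)$.

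Then comes the bulk of the argument: ruling out all non-identity, non-$s_2$ elements of $W$, i.e. all $\sigma$ with $\ell(\sigma)\ge 2$ or $\sigma=s_i$ with $i\ne 2$ (the latter already done). For a general reduced word $\sigma = s_{i_1}\cdots s_{i_k}$, I would argue that $\sigma(\lambda+\rho)-\rho-\mu$ must have a negative coefficient. The cleanest route is: any such $\sigma$ can be written so that it either (a) contains $s_1$ as its leftmost letter in some reduced expression, or (b) fixes $\w_1$ entirely (when $\sigma$ lies in the parabolic subgroup generated by $s_2,\ldots,s_r$). In case (b), $\sigma(\lambda+\rho)-\rho-\mu = \lambda-\mu+(\sigma(\rho)-\rho)$, and $\sigma(\rho)-\rho = -\sum_{\alpha>0,\ \sigma^{-1}\alpha<0}\alpha$ is a nonpositive combination of positive roots whose support, for $\sigma$ anything other than $1$ or $s_2$ in that parabolic, overwhelms the small nonnegative combination $\lambda-\mu$ supported on $\a_1,\a_2,\a_3$; in particular any $\sigma$ involving $s_j$ for $j\ge 4$ forces a negative $\a_j$-coefficient, and within $\langle s_2,s_3\rangle$ one checks the finitely many elements $s_3, s_2s_3, s_3s_2, s_2s_3s_2$ by hand and finds a negative coefficient in each (this should parallel, and be simpler than, the $m_4$ computations announced in part~4). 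In case (a), I would push the $s_1$ through: write $\sigma = s_1\tau$ and use $s_1(\lambda+\rho) = \ell(\w_1-\a_1)+\rho-\a_1$, so that the $\a_1$-coefficient acquires the large negative contribution $-(\ell+1)$ computed in the first proposition, which no subsequent reflections from $\{s_2,\ldots,s_r\}$ can repair enough to make nonnegative.

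The main obstacle I anticipate is the case analysis in case (b) within the rank-two parabolic $\langle s_2, s_3\rangle$, and more generally making precise the claim that ``$\sigma(\rho)-\rho$ overwhelms $\lambda-\mu$'' — the inequality $\lambda-\mu$ has bounded, explicitly known coefficients (multiples of $m_3$ on $\a_1,\a_2,\a_3$ and zero elsewhere), so one needs a uniform lower bound on how negative some coefficient of $\sigma(\rho)-\rho$ gets, and a clean way to see this is to note that if $s_j$ appears in a reduced word for $\sigma$ with $j\ge 4$ then $\a_j$ itself appears among the inversions of $\sigma$ with coefficient exactly $-1$ (and nothing in $\lambda-\mu$ compensates), while if $\sigma\in\langle s_2,s_3\rangle\setminus\{1,s_2\}$ a direct five-element check settles it. I would present the parabolic reduction as a lemma-free inline argument to keep things short, deferring the genuinely exhaustive bookkeeping to the analogous but longer proof of part~4.
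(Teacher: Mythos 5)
Your overall strategy is the paper's: expand $\sigma(\lambda+\rho)-\rho-\mu$ in the simple-root basis, exhibit a negative coefficient for every $\sigma$ other than $1$ and $s_2$, and check directly that $1$ and $s_2$ give nonnegative expansions. The one concrete problem is your guessed expansion of $\lambda-\mu$. With $\ell=m_1+2m_2+3m_3$ one gets
\[
\lambda-\mu=(\ell-m_1)\w_1-m_2\w_2-m_3\w_3=(m_2+2m_3)\a_1+m_3\a_2,
\]
so the $\a_2$-coefficient is $m_3$ (not $2m_3$) and, crucially, the $\a_3$-coefficient is $0$, not $m_3$. This dissolves the difficulty you flag for $s_3$: for every $2<i\le r$ one has $s_i(\lambda+\rho)-\rho-\mu=\lambda-\mu-\a_i=(m_2+2m_3)\a_1+m_3\a_2-\a_i$, whose $\a_i$-coefficient is $-1$, so $s_3$ is excluded by exactly the same one-line computation as $s_i$ for $i\ge 4$; this is precisely what the paper does in Equation~\eqref{eq:1part3thmA}. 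As written, your proposal leaves the $s_3$ case genuinely unresolved (``I expect the negative coefficient to appear\dots''), so you must actually carry out this expansion. Your verification of $s_2$ goes through with the corrected coefficient, since all that is needed is $m_3-1\ge 0$, which holds because $m_3\neq 0$.

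Your third step --- ruling out $\ell(\sigma)\ge 2$ via the parabolic decomposition and the fact that the support of $(\lambda+\rho)-\sigma(\lambda+\rho)$ contains $\a_j$ for every $s_j$ occurring in a reduced word of $\sigma$ --- is sound and is in fact more than the paper writes down: the paper's proof of this proposition only treats the identity and the simple reflections, leaving the longer elements to the same implicit support argument you spell out. Note that with the corrected expansion your case analysis inside $\langle s_2,s_3\rangle$ collapses: every element of $\langle s_2,s_3\rangle$ other than $1$ and $s_2$ contains $s_3$ in its reduced word, and since the $\a_3$-coefficient of $\lambda-\mu$ is already $0$, all of them are excluded at once; no five-element hand check is needed.
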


\begin{proof}We begin by establishing that under the given conditions $s_i\notin\A(\lambda,\mu)$ for any $2<i\leq r$. 
Hence, it suffices to show that if  $2<i\leq r$, then
\[s_i(\lambda+\rho)-\rho-\mu=c_1\a_1+\cdots+c_r\a_r\]
and there exists $1\leq j\leq r$ such that $c_j<0$.
For $2<i\leq r$ note
\begin{align}\nonumber
    s_i(\lambda+\rho)-\rho-\mu
&=(\ell-m_1)\w_1-m_2\w_2-m_3\w_3-\a_i\\\nonumber
&=\frac{(2m_2+3m_3)r-m_2(r-1)-m_3(r-2)}{r+1}\a_1\\\nonumber
&\qquad+\frac{(2m_2+3m_3)(r-1)-2m_2(r-1)-2m_3(r-2)}{r+1}\a_2-\a_i\\
&=(m_2+2m_3)\a_1+m_3\a_2-\a_i.\label{eq:1part3thmA}
\end{align}
Hence, as desired, the coefficient of $\a_i$ is negative, and $s_i\notin\A(\lambda,\mu)$, whenever $2<i\leq r$.

To complete the proof it suffices to show that $1$ and $s_2$ are both in $\A(\lambda,\mu)$. First note that since $\mu=m_1\w_1+m_2\w_2+m_3\w_3$, we know $\ell=m_1+2m_2+3m_3$,  hence $\ell-m_1=2m_2+3m_3$. Then
\begin{align*}
    1(\lambda+\rho)-\rho-\mu&=(2m_2+3m_3)\a_1+m_3\a_2\mbox{ and}\\
    s_2(\lambda+\rho)-\rho-\mu&=(2m_2+3m_3)\a_1+(m_3-1)\a_2.
\end{align*}
Since $m_3\geq 1$, we have established that $1,s_2\in\A(\lambda,\mu)$.
\end{proof}

The following result establishes part 4 of Theorem \ref{thm:mainA}.

\begin{proposition}\label{prop:m4}
Type $A_r$ $(r\geq 1)$: If $\lambda=\ell\varpi_1$, $\mu=\sum_{1\leq i\leq r}m_i\varpi_i$, where $m_i\in\mathbb{Z}_+$ and $(\ell-\sum_{1\leq i\leq r} im_i)= 0$, $m_j=0$ for all $5\leq j\leq r$ and $m_4\neq 0$, then 
$\A(\lambda,\mu)=\{1,s_2,s_3,s_2s_3\}$ when $m_4=1$ and $\A(\lambda,\mu)=\{1,s_2,s_3,s_2s_3,s_3s_2,s_2s_3s_2\}$ when $m_4\ge 2$.
\end{proposition}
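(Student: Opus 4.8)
The plan is to prove separately that each Weyl group element named in the statement lies in $\A(\lambda,\mu)$, and conversely that $\A(\lambda,\mu)\subseteq\langle s_2,s_3\rangle$; recall that $\langle s_2,s_3\rangle\cong S_3$ has order six, with elements exactly $1,s_2,s_3,s_2s_3,s_3s_2,s_2s_3s_2$. For the first part I would begin with $1\cdot(\lambda+\rho)-\rho-\mu=\lambda-\mu$ and use the explicit fundamental-weight formula (the hypothesis $\ell=\sum_i im_i$ guaranteeing this lies in the root lattice) to get
\[
\lambda-\mu=(m_2+2m_3+3m_4)\alpha_1+(m_3+2m_4)\alpha_2+m_4\alpha_3,
\]
which is a nonnegative $\ZZ$-combination of simple roots since $m_4\ge1$, so $1\in\A(\lambda,\mu)$. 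Every element of $\langle s_2,s_3\rangle$ fixes $\lambda=\ell\w_1$, so for such $\sigma$ one has $\sigma(\lambda+\rho)-\rho-\mu=(\lambda-\mu)+(\sigma\rho-\rho)$; computing $\sigma\rho-\rho$ for the six elements (via $s_i\rho=\rho-\alpha_i$ and the action of $s_2,s_3$ on $\alpha_2,\alpha_3$) leaves the $\alpha_1$-coefficient equal to $m_2+2m_3+3m_4\ge0$ and turns the $(\alpha_2,\alpha_3)$-coefficients into $(m_3+2m_4,m_4)$, $(m_3+2m_4-1,m_4)$, $(m_3+2m_4,m_4-1)$, $(m_3+2m_4-2,m_4-1)$, $(m_3+2m_4-1,m_4-2)$, $(m_3+2m_4-2,m_4-2)$, respectively. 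Hence $1,s_2,s_3,s_2s_3\in\A(\lambda,\mu)$ for all $m_4\ge1$, while $s_3s_2,s_2s_3s_2\in\A(\lambda,\mu)$ precisely when $m_4\ge2$, matching the claimed dichotomy.

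For the reverse inclusion I would pass to the standard $\epsilon$-coordinates for type $A_r$, writing $\lambda+\rho=(N_1,\dots,N_{r+1})$ with $N_1=\ell+r$ and $N_k=r+1-k$ for $k\ge2$, and $\mu+\rho=(T_1,\dots,T_{r+1})$ with $T_k=(r+1-k)+e_k$, where $(e_1,e_2,e_3,e_4)=(m_1+m_2+m_3+m_4,\;m_2+m_3+m_4,\;m_3+m_4,\;m_4)$ and $e_k=0$ for $k\ge5$. Using the standard criterion that a weight $\sum_k c_k\epsilon_k$ with $\sum_k c_k=0$ lies in $\ZZ_{\ge0}\Phi^+$ iff all its partial sums $\sum_{k\le j}c_k$ are nonnegative, membership $\sigma\in\A(\lambda,\mu)$ is equivalent to $\sum_{k\le j}N_{\sigma^{-1}(k)}\ge\sum_{k\le j}T_k$ for every $j$. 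The crucial observation is that $e_1+\cdots+e_j=\ell$ for every $j\ge4$ --- this is exactly the hypothesis $\ell=\sum_i im_i$ --- so for $j\ge4$ the inequality becomes $\sum_{k\le j}N_{\sigma^{-1}(k)}\ge\sum_{k\le j}N_k$; since $N_1>N_2>\cdots>N_{r+1}$, the right side is the maximum value of a sum of $j$ of the $N_i$, so equality is forced and $\sigma^{-1}(\{1,\dots,j\})=\{1,\dots,j\}$. Letting $j$ run through $4,5,\dots,r$ shows $\sigma$ fixes $5,6,\dots,r+1$ and stabilizes $\{1,2,3,4\}$, while the $j=1$ inequality --- where $T_1=r+m_1+m_2+m_3+m_4\ge r$ strictly exceeds every $N_k$ with $k\ge2$ --- forces $\sigma^{-1}(1)=1$. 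Hence $\sigma$ merely permutes $\{2,3,4\}$, i.e.\ $\sigma\in\langle s_2,s_3\rangle$.

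I expect the reverse inclusion to be the main obstacle: it relies on the $\epsilon$-coordinate reformulation of the partition function and, decisively, on the cancellation $e_1+\cdots+e_j=\ell$ for $j\ge4$, which is what collapses the a priori unbounded set of candidate permutations $\sigma$ down to the six elements of $S_{\{2,3,4\}}$. By contrast the first part is a finite (if somewhat tedious) computation, whose only delicate point is writing $\lambda-\mu$ correctly in the simple-root basis, after which the form of $\sigma(\lambda+\rho)-\rho-\mu$ for every $\sigma\in\langle s_2,s_3\rangle$ follows immediately.
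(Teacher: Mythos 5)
Your proposal is correct, and while the forward inclusion coincides with the paper's computation (the paper likewise converts $\lambda-\mu$ to $(m_2+2m_3+3m_4)\a_1+(m_3+2m_4)\a_2+m_4\a_3$ and subtracts $\a_i$, $2\a_i+\a_j$, or $2\a_2+2\a_3$ for each candidate $\sigma$), your reverse inclusion is genuinely different and in fact tighter than what the paper records. The paper only verifies that the single reflections $s_i$ for $i\geq 4$ fail to lie in $\A(\lambda,\mu)$ and then asserts that the alternation set ``can only contain Weyl group elements formed by the simple reflections $s_2$ and $s_3$''; unlike the type $B$ arguments later in the paper, no length induction is supplied to rule out longer words involving $s_1$ or $s_i$ with $i\geq 4$. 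Your $\epsilon$-coordinate argument closes that gap cleanly: the partial-sum criterion for membership in $\ZZ_{\geq 0}\Phi^+$, combined with the identity $e_1+\cdots+e_j=\sum_i i m_i=\ell$ for $j\geq 4$ and the strict ordering $N_1>N_2>\cdots>N_{r+1}$, forces $\sigma^{-1}$ to stabilize every initial segment $\{1,\dots,j\}$ with $j\geq 4$ and to fix $1$, so $\sigma\in S_{\{2,3,4\}}=\langle s_2,s_3\rangle$ with no case analysis on reduced words. This buys both rigor and generality --- the same argument handles $m_k\neq 0$, $m_j=0$ for $j>k$ uniformly, giving $\A(\lambda,\mu)\subseteq S_{\{2,\dots,k\}}$ --- at the cost of leaving the explicit simple-root coordinates that the paper reuses in Section~\ref{sec:qanalog} to compute the $q$-multiplicities.
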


\begin{proof}
For $i\geq 2$, we compute
\begin{align*}
    s_i(\lambda+\rho)-\rho-\mu&=\ell\w_1+\rho-\a_i-\rho-\mu\\
&=(\ell-m_1)\w_1-m_2\w_2-m_3\w_3-m_4\w_4-\a_i\\
&=\frac{(2m_2+3m_3+4m_4)r-m_2(r-1)-m_3(r-2)-m_4(r-3)}{r+1}\a_1\\
&\qquad+\frac{(2m_2+3m_3+4m_4)(r-1)-2m_2(r-1)-2m_3(r-2)-2m_4(r-3)}{r+1}\a_2\\
&\qquad+\frac{(2m_2+3m_3+4m_4)(r-2)-2m_2(r-2)-3m_3(r-2)-3m_4(r-3)}{r+1}\a_3-\a_i\\
&=(m_2+2m_3+3m_4)\a_1+(m_3+2m_4)\a_2+m_4\a_3-\a_i.
\end{align*}
Note that, in this case $\ell=m_1+2m_2+3m_3+4m_4$; hence $\ell-m_1=2m_2+3m_3+4m_4$. If $i \geq 4$, then $s_i\left(\lambda + \rho\right) - \rho - \mu$ has a negative coefficient for $\a_i$, and hence $s_i\notin \A(\lambda,\mu)$. Thus, $\A(\lambda,\mu)$ can only contain Weyl group elements formed by the simple reflections $s_2$ and $s_3$.

Using the similar calculations for $i\neq j$ where $i,j\ge 2$ and $|i-j|=1$, note that
\begin{align*}
    s_is_j(\lambda+\rho)-\rho-\mu&=\ell\w_1+\rho-2\a_i-\a_j-\rho-\mu\\
&=(\ell-m_1)\w_1-m_2\w_2-m_3\w_3-m_4\w_4-2\a_i-\a_j\\
&=(m_2+2m_3+3m_4)\a_1+(m_3+2m_4)\a_2+m_4\a_3-2\a_i-\a_j.
\end{align*}
where the same conversion from fundamental weights to simple roots applies. Notice that if $m_4=1$, $s_2s_3$ is the only element of this form in the Weyl alternation set. If $m_4\geq 2$, then $s_3s_2$ is also in the Weyl alternation set. Also, we have
\begin{align*}
    s_2s_3s_2(\lambda+\rho)-\rho-\mu&=\ell\w_1+\rho-2\a_2-2\a_3-\rho-\mu\\
&=(\ell-m_1)\w_1-m_2\w_2-m_3\w_3-m_4\w_4-2\a_2-2\a_3\\
&=(m_2+2m_3+3m_4)\a_1+(m_3+2m_4)\a_2+m_4\a_3-2\a_2-2\a_3.
\end{align*}
so $s_2s_3s_2$ is also in the Weyl alternation set when $m_4\geq 2$.
\end{proof}

\subsection{Low rank examples: types \texorpdfstring{$A_2$}{A2} and \texorpdfstring{$A_3$}{A3}}\label{sec:posetsA}
In this section, we consider the Lie algebra of types $A_2$ and $A_3$. By fixing the weight $\mu$ and varying the weight $\lambda$, we can make a diagram over the fundamental weight lattice of the Lie algebra by coloring two weights with the same color when they have the same Weyl alternation set.

Figure \ref{typeAstuff} presents Weyl alternation diagrams in the Lie algebra of type $A_2$ where we fix a weight $\mu$ in the root lattice and vary $\lambda$ over the fundamental weight lattice. The case of $\mu=0$ (Figure \ref{zeroA}) first appeared in \cite{PHThesis} and later, Harris in collaboration with  Lescinsky and  Mabie, generalized these results to all integral weights $\mu$ of $\mathfrak{sl}_{3}(\mathbb{C})$~\cite{HLM}. Figures \ref{fig:a1+a2} and \ref{fig:2a1+2a2} illustrate the cases where $\mu=\w_1+\w_2$ and $\mu=2\w_1+2\w_2$, respectively. 

\begin{figure}[H]
\centering
\input{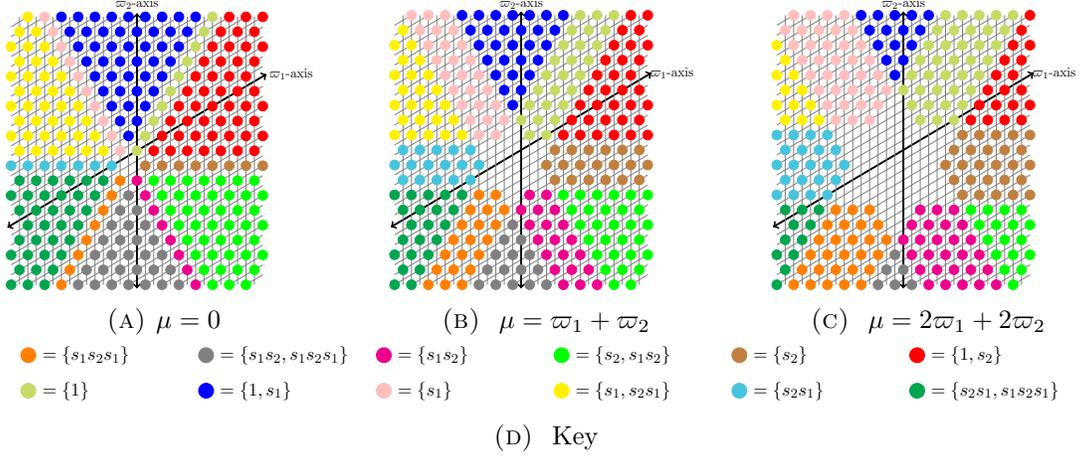}
    \caption{Weyl alternation diagram for the Lie algebra of type $A_2$ for a fixed $\mu$ and $\lambda=m\w_1+n\w_2$ with $m,n\in\mathbb{N}$.}\label{typeAstuff}
\end{figure}

As depicted in Figure \ref{typeAstuff}, the colored regions represent the intersection of specific hyperplanes (within the fundamental weight lattice) arising from the action of the elements of the Weyl group on the input of the partition function in \eqref{KWMF}. 
That is, each element $\sigma$ of the Weyl group gives rise to a region on the fundamental weight lattice arising from the intersection of some hyperplanes, from which one can create Weyl alternation diagrams.

To illustrate how to create Weyl alternation diagrams we fix $r=3$. For each pair $(\ell\w_1,\mu)$ for which $m(\ell\w_1,\mu)=1$, we have that $\mu = m_1\varpi_1+m_2\varpi_2+m_3\varpi_3$ and so $\ell - (m_1+2m_2+3m_3) = 4p$ for $p \in \mathbb{Z}_{\geq0}$. We also have that 
\begin{align*}
    1(\lambda+\rho)-\rho-\mu &= \ell\varpi_1-(m_1\varpi_1+m_2\varpi_2+m_3\varpi_3) =
     \left(m_2+2m_3+3p\right)\alpha_1+\left(m_3+2p\right)\alpha_2+p\alpha_3.
\end{align*}
Hence, if $i\geq2$, then
\begin{align*}
    s_i(\lambda+\rho)-\rho-\mu &= \lambda-\mu - \alpha_i\\
    s_is_{i+1}(\lambda+\rho)-\rho-\mu&=\lambda-\mu - 2\alpha_i - \alpha_{i+1} \\
    s_{i+1}s_i(\lambda+\rho)-\rho-\mu &= \lambda-\mu - \alpha_i - 2\alpha_{i+1} \\
  s_is_{i+1}s_i(\lambda+\rho)-\rho-\mu &= \lambda-\mu -2\alpha_i - 2\alpha_{i+1}.
  \end{align*}
The above equations imply that
\begin{itemize}
    \item $s_2 \in \A(\lambda,\mu)$  if and only if $m_3+2p \geq 1$, 
    \item $s_3 \in \A(\lambda,\mu)$ if and only if $p\geq 1$
    \item $s_2s_3 \in \A(\lambda,\mu)$ if and only if  $m_3+2p \geq 2$  and $p \geq 1$ 
    \item $s_3s_2 \in \A(\lambda,\mu)$ if and only if  $m_3+2p \geq 1$  and $p \geq 2$
    \item $s_2s_3s_2 \in \A(\lambda,\mu)$ if and only if $m_3+2p \geq 2$   and $p \geq 2$
\end{itemize}

Hence, if $p=0$, then $\A(\lambda, \mu)=\{1\}$; if $p=1$,  then $\A(\lambda, \mu)=\{1,s_2,s_3,s_2s_3\}$; and if $p\ge 2$, then $\A(\lambda, \mu)=\{1,s_2,s_3,s_2s_3,s_3s_2,s_2s_3s_2\}$.

From the inequalities above, the inclusion of $\sigma\in W$ in the Weyl alternation set $\A(\lambda, \mu)$ as prescribed relies only on the value of the nonnegative integer $p$. 
We note that the diagrams illustrating the allowable $\mu$ is either one of the highlighted weights in Figure~\ref{fig:typeA3} or there is no such $\mu$, meaning that $\A(\lambda,\mu)=\emptyset$.

\begin{figure}[H]
\centering
\begin{subfigure}[b]{0.40\textwidth}\centering
    \resizebox{!}{1.55in}{%
  \includegraphics{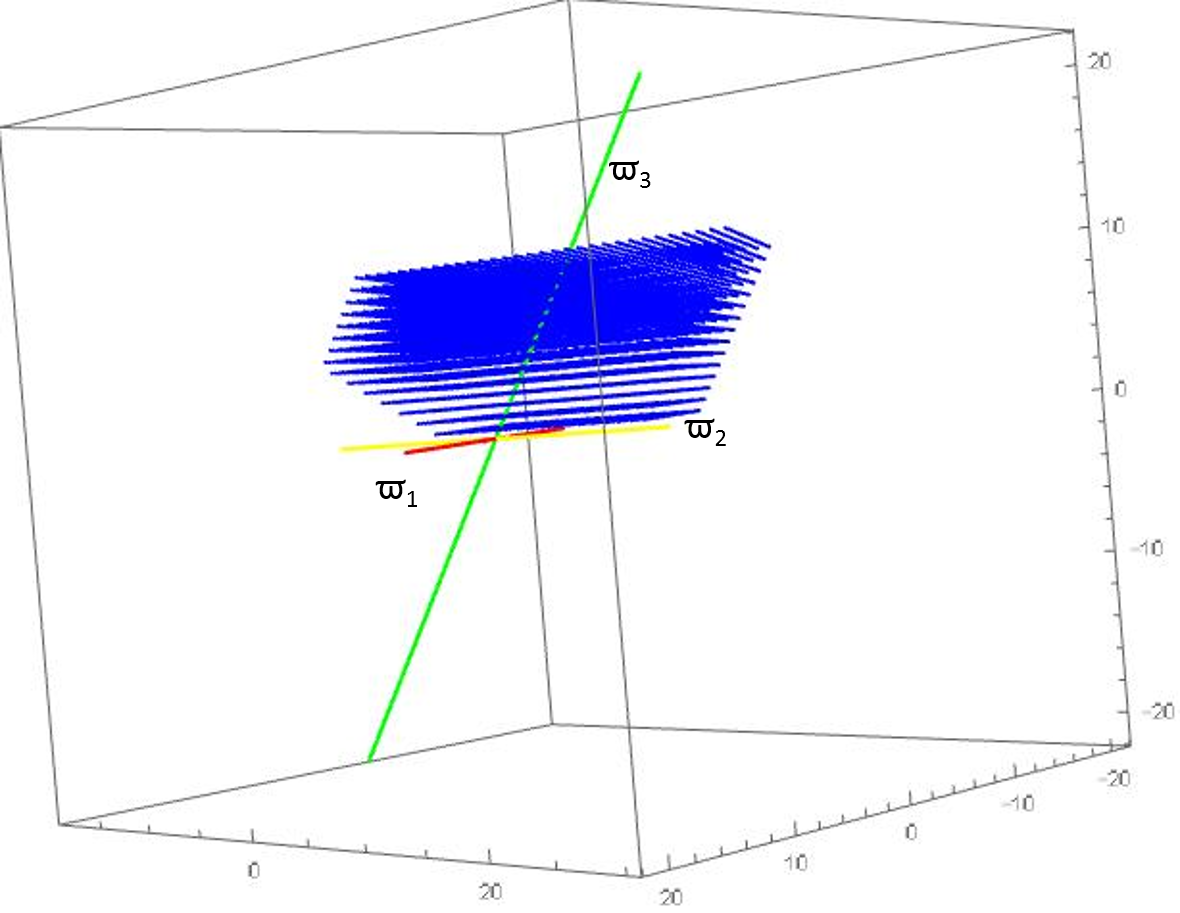}}
  \caption{Standard view in 3D}
  \label{fig:A3-standard}
\end{subfigure}
\qquad
\begin{subfigure}[b]{0.40\textwidth}\centering
    \resizebox{!}{1.55in}{%
  \includegraphics{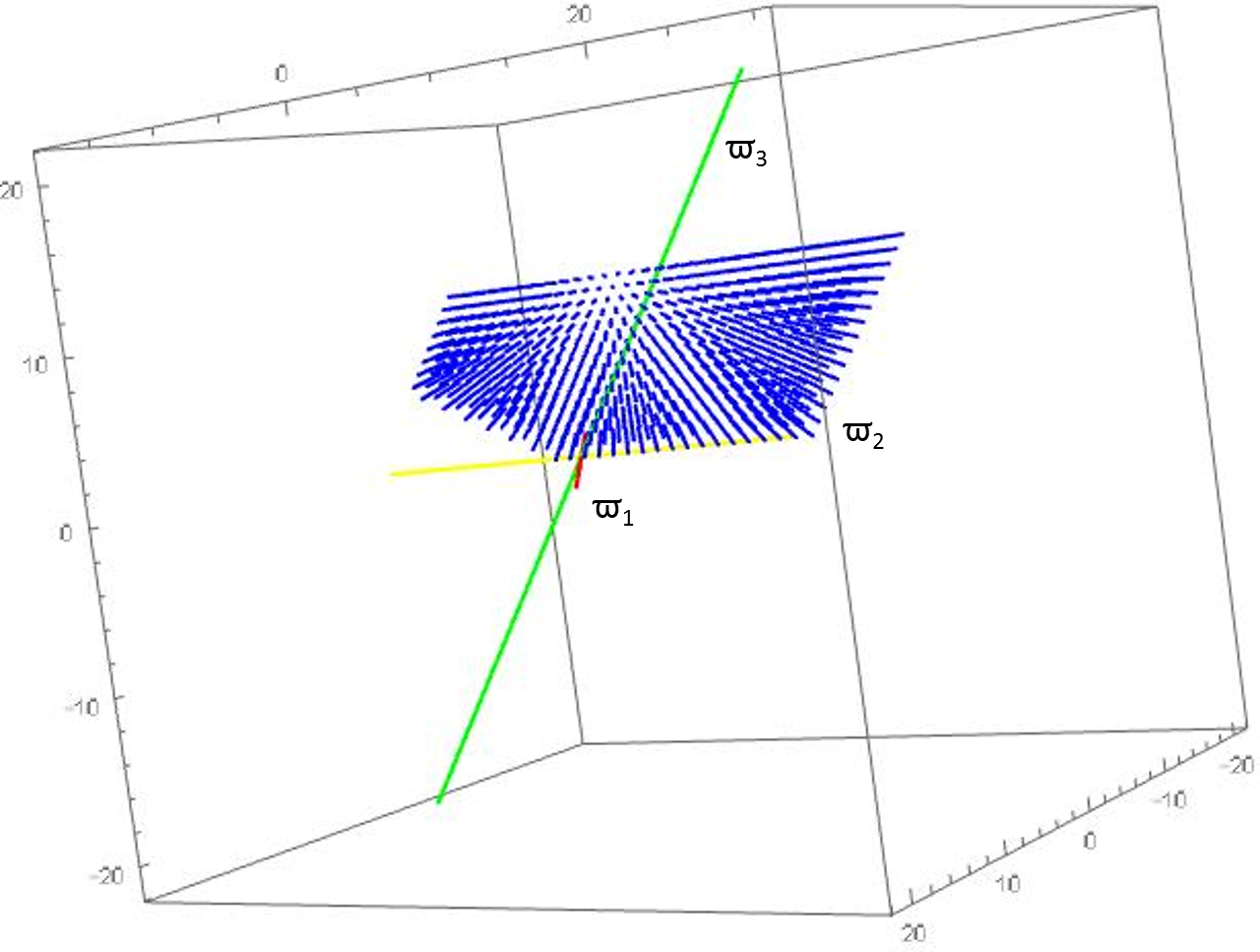}}
  \caption{View from positive $\w_1$-axis in 3D}
  \label{fig:A3-w1axis}
\end{subfigure}\\
\begin{subfigure}[b]{0.40\textwidth}\centering
    \resizebox{!}{1.55in}{%
  \includegraphics{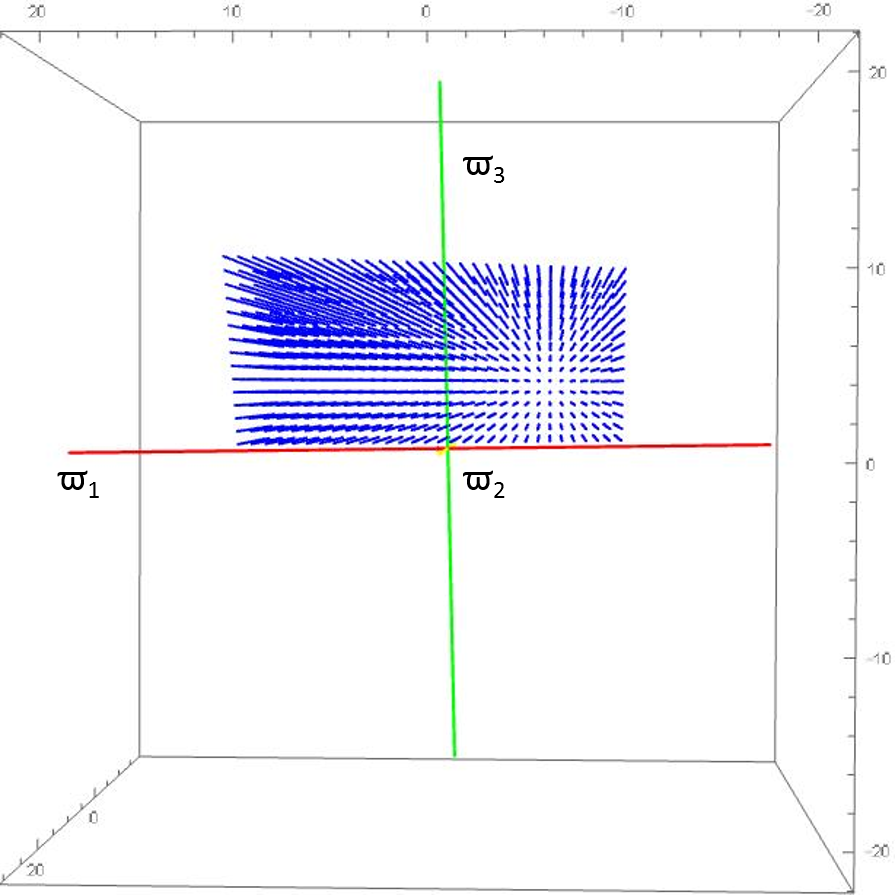}}
  \caption{View from positive $\w_2$-axis in 3D}
  \label{fig:A3-w2axis}
\end{subfigure}
\qquad
\begin{subfigure}[b]{0.40\textwidth}\centering
    \resizebox{!}{1.55in}{%
  \includegraphics{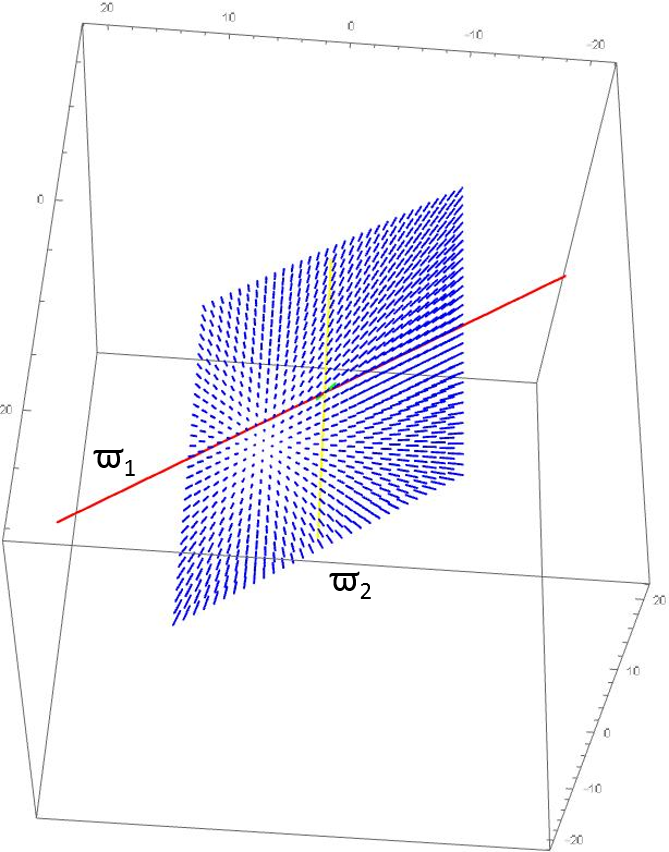}}
  \caption{View from positive $\w_3$-axis in 3D}
  \label{fig:A3-w3axis}  
\end{subfigure}
\caption{Diagrams highlighting weights in the Weyl alternation sets $\A(\lambda,\mu)$ when $\lambda=\ell\w_1$ in the Lie algebra of type $A_3$.}\label{fig:typeA3}
\end{figure}

These low rank computations illustrate the complexity in determining whether an element $\sigma$ of the Weyl group is in a specific Weyl alternation set $\A(\lambda,\mu)$. Deciding whether or not $\sigma$ is in $\A(\lambda,\mu)$ requires determining if $\sigma(\lambda+\rho)-\rho-\mu$ lies in a specified region of the integral root lattice. Although this may sound simple, the number of such regions as we vary over all pairs of weights of a Lie algebra is currently unknown. 

\section{Type \texorpdfstring{$B$}{B}}\label{sec:B}
In this section, we consider the Lie algebra of type $B_r$ for $r\geq 2$. The set of simple roots is given by $\Delta=\{\alpha_1,\ldots,\alpha_{r}\}$
and the set of positive roots is given by
$
\Phi^+=\{\varepsilon_i-\varepsilon_j,\varepsilon_i+\varepsilon_j:\;1\le i<j\le n\}\cup\{\varepsilon_i: 1\le i\le r\}.
$
The fundamental weights are defined by $\varpi_i=\varepsilon_1+\cdots+\varepsilon_i$ for $1\le i\le r-1$ and $\varpi_r=\frac{1}{2}(\varepsilon_1+\varepsilon_2+\cdots+\varepsilon_r)$. The weight $\rho$ is defined as half the sum of the positive roots, which is equivalent to $\rho = \varpi_1+\cdots+\varpi_r$. The Weyl group elements are generated by the simple root reflections $s_1,s_2,\ldots,s_r$ which act on the simple roots and fundamental weights as follows. If $1\leq i\leq r-1$, then $s_i(\a_i)=-\a_i$, $s_{i}(\a_{i-1})=\a_{i-1}+\a_i$, $s_{i}(\a_{i+1})=\a_i+\a_{i+1}$, and $s_r(\a_r)=-\a_r$, $s_r(\a_{r-1})=\a_{r-1}+2\a_r$.
For any $1\leq i,j\leq r$, \[s_{i}(\varpi_{j})=\begin{cases}\varpi_j-\alpha_i&\mbox{if $i=j$}\\
\varpi_j&\mbox{otherwise.}
\end{cases}
\]

We provide a proof of Theorem \ref{thm:mainB} by establishing the following technical results.

\begin{lemma}
In type $B_r$ with $\ell, r \in 2\ZZ_{\ge 0}$ under the condition $\ell-1=\sum_{1\le i< r} im_i + \frac{rm_r}{2}$ with $m_i\in 2\ZZ_{\ge 0}$, then $\A(\ell\w_1,\mu)=\emptyset$.
\end{lemma}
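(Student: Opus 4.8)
The plan is to observe that the three parity hypotheses in the statement are jointly incompatible, so that in fact there is no weight $\mu=\sum_{1\le i\le r}m_i\w_i$ satisfying all of the stated constraints; the conclusion $\A(\ell\w_1,\mu)=\emptyset$ then holds vacuously. This requires no facts about Kostant's partition function or about the Weyl-group action — it is a one-line parity count — so the work is entirely in setting up the bookkeeping and phrasing the conclusion honestly.

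The first step is to compare the parities of the two sides of the defining relation $\ell-1=\sum_{1\le i< r} im_i + \tfrac{rm_r}{2}$. Since $\ell\in 2\ZZ_{\ge 0}$, the left-hand side $\ell-1$ is odd. For the right-hand side: each $m_i$ with $1\le i<r$ is even, so $\sum_{1\le i<r} im_i$ is even; and writing $m_r=2m_r'$ with $m_r'\in\ZZ_{\ge 0}$ (legitimate since $m_r$ is even) gives $\tfrac{rm_r}{2}=rm_r'$, which is even because $r\in 2\ZZ_{\ge 0}$. Hence the right-hand side is even, contradicting that it equals the odd integer $\ell-1$.

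Therefore no $\mu$ of the prescribed form exists, so the family of pairs $(\ell\w_1,\mu)$ under consideration is empty and $\A(\ell\w_1,\mu)=\emptyset$ trivially; the same reasoning gives $m_q(\ell\w_1,\mu)=0$, which is the corresponding part of Theorem~\ref{thm:qmainB}. In the write-up I would state this cleanly as "the hypotheses force $\ell-1$ to be simultaneously odd and even," and then note that the case is recorded only so that the case analysis underlying Theorem~\ref{thm:mainB} is exhaustive: when $r$ is even, every List~B pair with $\lambda=\ell\w_1$ necessarily has $\ell$ odd (those are handled by parts~(1) and~(3)), and this lemma disposes of the remaining formal possibility $\ell$ even.

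The only real obstacle is conceptual rather than computational — recognizing that demanding $\ell$, $r$, and all of $m_1,\dots,m_r$ to be even over-determines the List~B relation. If one wanted to make the statement non-vacuous instead, one would have to weaken the hypothesis on $r$ (allowing $r$ odd), in which case genuine pairs $(\ell\w_1,\mu)$ with $\ell$ even do occur and a true emptiness-of-$\A$ argument would be needed; but as stated the parity obstruction suffices.
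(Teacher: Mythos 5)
Your proof is correct and is essentially identical to the paper's own argument: both show that for $\ell$ and $r$ even and all $m_i$ even, the right-hand side of $\ell-1=\sum_{1\le i<r} im_i+\frac{rm_r}{2}$ is even while the left-hand side is odd, so no admissible $\mu$ exists and $\A(\ell\w_1,\mu)=\emptyset$ vacuously. The additional remarks you make about the role of this case in the overall case analysis are accurate but not part of the paper's proof.
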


\begin{proof}
Suppose $\ell$ is even. If $r$ is even, then $\frac{rm_r}{2}$ is even since $m_r$ is even. Thus, $\sum_{1\le i <r} im_i + \frac{rm_r}{2}$ is also even. Then $\ell=1+\sum_{1\le i <r} im_i + \frac{rm_r}{2}$ must be odd. Since $\ell$ is even, then there are no values of the $m_i$'s ($1\le i\le r$) which satisfy the condition. Hence, $\A(\ell\w_1,\mu)=\emptyset$.
\end{proof}

\begin{proposition}\label{prop:sinonconsec}
  Let $\sigma=s_{i_1}s_{i_2}\cdots s_{i_k}$ where the indices of the simple reflections form a collection of nonconsecutive integers $2\leq i_1,\dots,i_k \leq r$. If $\lambda = \ell\w_1$ and $\mu = (\ell - 1 - 2b-3c)\w_1 + b\w_2 + c\w_3$, for $\ell,b,c$ nonnegative integers such that $\ell > b$ and $\ell > c$, then we have $\sigma(\lambda + \rho) - \mu - \rho$ is a nonnegative $\mathbb{Z}$-linear combination of positive roots.
\end{proposition}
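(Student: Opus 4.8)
The plan is to reduce the statement to a short computation in the basis of simple roots, using the nonconsecutive hypothesis to pin down $\sigma(\rho)$ exactly.

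First I would evaluate $\sigma(\lambda+\rho)-\mu-\rho$. Since the indices $i_1,\dots,i_k$ are pairwise nonconsecutive, the reflections $s_{i_1},\dots,s_{i_k}$ pairwise commute and each $s_{i_m}$ fixes $\alpha_{i_\ell}$ for $\ell\neq m$ (the corresponding Dynkin nodes are nonadjacent). Writing $\rho=\varpi_1+\cdots+\varpi_r$ and repeatedly applying $s_i(\varpi_j)=\varpi_j-\delta_{ij}\alpha_i$, an easy induction gives $\sigma(\rho)=\rho-(\alpha_{i_1}+\cdots+\alpha_{i_k})$; and since every $i_m\ge 2$, the same rule gives $\sigma(\varpi_1)=\varpi_1$, hence $\sigma(\lambda)=\ell\varpi_1=\lambda$. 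Therefore
\[
\sigma(\lambda+\rho)-\mu-\rho=\lambda-\mu-\sum_{m=1}^{k}\alpha_{i_m}=\ell\varpi_1-\mu-\sum_{m=1}^{k}\alpha_{i_m}.
\]

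Next I would expand $\ell\varpi_1-\mu$ in simple roots. From the definition of $\mu$ we have $\ell\varpi_1-\mu=(1+2b+3c)\varpi_1-b\varpi_2-c\varpi_3$, and substituting $\varpi_1=\alpha_1+\cdots+\alpha_r$, $\varpi_2=\alpha_1+2\alpha_2+2\alpha_3+\cdots+2\alpha_r$, and $\varpi_3=\alpha_1+2\alpha_2+3\alpha_3+\cdots+3\alpha_r$ yields
\[
\ell\varpi_1-\mu=(1+b+2c)\alpha_1+(1+c)\alpha_2+\alpha_3+\alpha_4+\cdots+\alpha_r.
\]
The roots $\alpha_{i_1},\dots,\alpha_{i_k}$ are pairwise distinct with indices $\ge 2$, so subtracting their sum lowers the coefficient of each $\alpha_{i_m}$ by exactly $1$ and changes nothing else. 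The coefficient of $\alpha_1$ is untouched and equals $1+b+2c\ge 1$; the coefficient of $\alpha_2$ becomes $1+c$ or $c$, both $\ge 0$; and for $m\ge 3$ the coefficient of $\alpha_m$ is $1$ or $0$. Hence $\sigma(\lambda+\rho)-\mu-\rho$ is a nonnegative \emph{integer} combination of simple roots, and since each simple root is a positive root it is in particular a nonnegative $\mathbb{Z}$-linear combination of positive roots, i.e.\ $\wp\bigl(\sigma(\lambda+\rho)-\mu-\rho\bigr)>0$, as claimed.

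The argument is really just bookkeeping, so there is no deep obstacle; the step most prone to slips is the conversion of $\varpi_1,\varpi_2,\varpi_3$ to simple roots, together with the elementary but easy-to-forget observation that landing in the nonnegative cone spanned by the \emph{simple} roots already suffices. The one genuine subtlety is the boundary case $r=3$, where $\varpi_3=\varpi_r=\tfrac12(\varepsilon_1+\varepsilon_2+\varepsilon_3)$ is the spin weight with a different simple-root expansion and should be treated separately (this is harmless in the application to Theorem~\ref{thm:mainB}(3), where the coefficient of $\varpi_3$ is even). I also note that the hypotheses $\ell>b$ and $\ell>c$ are not invoked by this computation; they only serve to guarantee that $\mu$ is a genuine weight in the surrounding discussion.
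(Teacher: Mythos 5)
Your proof is correct and follows essentially the same route as the paper's: both reduce to $\sigma(\lambda+\rho)-\mu-\rho=\lambda-\mu-\sum_{m}\alpha_{i_m}$ using $\sigma(\lambda)=\lambda$ and $\sigma(\rho)=\rho-\sum_m\alpha_{i_m}$, and then expand $\lambda-\mu=\varpi_1+(b+2c)\alpha_1+c\alpha_2$ in simple roots, the nonconsecutivity guaranteeing that each subtracted $\alpha_{i_m}$ only lowers a distinct coefficient that is at least $1$. You are in fact somewhat more thorough than the paper, which stops at the displayed formula without spelling out the final nonnegativity check or the $r=3$ spin-weight caveat for $\varpi_3=\varpi_r$.
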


\begin{proof}
  Let $\sigma=s_{i_1}s_{i_2}\cdots s_{i_k}$ for some collection of nonconsecutive integers $2\leq i_1,\dots,i_k \leq r$. Note that $\sigma(\lambda) = \lambda$ and $\sigma(\rho)=\rho-\sum_{j=1}^k \a_{i_j}$. Hence, if $\lambda = \ell\w_1$ and $\mu = (\ell - 1 - 2b-3c)\w_1 + b\w_2 + c\w_3$, for $\ell,b,c$ nonnegative integers such that $\ell > b$ and $\ell > c$, then we have $\sigma(\lambda+\rho)-\mu-\rho = \lambda - \mu - \sum_{j=1}^k \a_{i_j} = \w_1+(b+2c)\a_1 + c\a_2 - \sum_{j=1}^k \a_{i_j}$.
\end{proof}

\begin{proposition}\label{lem:Bnonconsprod}
In type $B_r$  with $r\geq 3$ and an odd, positive integer $\ell$, all  $\sigma\in\A(\ell\w_1,(\ell-1-4k)\w_1+2k\w_2)$ where $k\in \ZZ_{\geq 0}$ are products of nonconsecutive $s_i$'s with $1<i\leq r$.
\end{proposition}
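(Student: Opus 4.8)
The plan is to exploit the identity
\[
\sigma(\lambda+\rho)-\rho-\mu \;=\; (\lambda-\mu)\;-\;\ell\,(\w_1-\sigma\w_1)\;-\;(\rho-\sigma\rho),
\]
in which both $\w_1-\sigma\w_1$ (because $\w_1$ is dominant) and $\rho-\sigma\rho=\sum_{\alpha\in N(\sigma)}\alpha$ are nonnegative integral combinations of simple roots, where $N(\sigma)=\{\alpha\in\Phi^+:\sigma^{-1}\alpha\in\Phi^-\}$ is the inversion set of $\sigma$ (so $|N(\sigma)|=\ell(\sigma)$). Using $\w_1=\varepsilon_1$ and $\w_2=\varepsilon_1+\varepsilon_2$ (the latter valid since $r\ge 3$) one finds the normal form $\lambda-\mu=(1+2k)\alpha_1+\alpha_2+\alpha_3+\cdots+\alpha_r$. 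Since the pair lies in List B we have $m_1=\ell-1-4k\ge 0$, hence $\ell>2k$; the hypothesis that $\ell$ is odd only serves to make $\mu$ a List B weight and is not otherwise used. The whole argument is driven by the inequality $\ell>2k$ together with the displayed identity.

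First I would show that every $\sigma\in\A(\ell\w_1,\mu)$ fixes $\w_1$, equivalently $\sigma\in W_{\{2,\ldots,r\}}=\langle s_2,\ldots,s_r\rangle$. If not, then $\w_1-\sigma\w_1$ equals $\varepsilon_1-\varepsilon_t$ or $\varepsilon_1+\varepsilon_t$ for some $t\ge 2$, or $2\varepsilon_1$, and in each case its $\alpha_1$-coefficient is at least $1$; moreover, since $\sigma\notin W_{\{2,\ldots,r\}}$, some root in $N(\sigma)$ involves $\alpha_1$, so $\rho-\sigma\rho$ likewise has $\alpha_1$-coefficient at least $1$. Hence the $\alpha_1$-coefficient of $\sigma(\lambda+\rho)-\rho-\mu$ is at most $(1+2k)-\ell-1=2k-\ell<0$, so $\wp(\sigma(\lambda+\rho)-\rho-\mu)=0$ and $\sigma\notin\A(\ell\w_1,\mu)$, a contradiction. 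In particular $N(\sigma)$ lies in the sub-root system spanned by $\alpha_2,\ldots,\alpha_r$.

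Now fix $\sigma\in W_{\{2,\ldots,r\}}$. Then $\sigma\w_1=\w_1$ and $\rho-\sigma\rho=\sum_{i=2}^{r}c_i\alpha_i$ with each $c_i\ge 0$, so
\[
\sigma(\lambda+\rho)-\rho-\mu \;=\; (1+2k)\alpha_1+\sum_{i=2}^{r}(1-c_i)\alpha_i ,
\]
and hence $\sigma\in\A(\ell\w_1,\mu)$ exactly when $c_i\le 1$ for every $i$, i.e.\ when every simple root occurs with total coefficient at most $1$ in $\sum_{\alpha\in N(\sigma)}\alpha$. The crucial remaining step is the structural fact that this happens if and only if $N(\sigma)\subseteq\Delta$, equivalently $\sigma=\prod_{i\in S}s_i$ for a set $S$ of pairwise non-adjacent indices. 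The $(\Leftarrow)$ directions are immediate. For the other direction, suppose $N(\sigma)$ contained a non-simple root; take one, $\gamma$, of minimal height and pick a simple $\alpha_i$ with $\gamma-\alpha_i\in\Phi^+$, so that $\gamma$ has $\alpha_i$-coefficient at least $1$. Since $\Phi^+\setminus N(\sigma)$ is closed, either $\alpha_i\in N(\sigma)$ or $\gamma-\alpha_i\in N(\sigma)$, and by minimality of $\gamma$ the latter root, if it occurs, is simple; in either case some simple root in the support of $\gamma$ acquires total coefficient at least $2$ in $\sum_{\alpha\in N(\sigma)}\alpha$, a contradiction. Likewise, if $\alpha_i,\alpha_j\in N(\sigma)$ with $i,j$ adjacent then closedness of $N(\sigma)$ forces $\alpha_i+\alpha_j\in N(\sigma)$, again impossible. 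Combined with the first step this gives $\sigma=\prod_{i\in S}s_i$ with $S\subseteq\{2,\ldots,r\}$ nonconsecutive, which is exactly the assertion; in fact the argument yields the equality $\A(\ell\w_1,\mu)=\{\prod_{i\in S}s_i:S\subseteq\{2,\ldots,r\}\text{ nonconsecutive}\}$.

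The main obstacle is this last, purely combinatorial step: in type $B_r$ one must handle the short roots $\varepsilon_i$ (with support $\{i,i+1,\ldots,r\}$) and especially the roots $\varepsilon_i+\varepsilon_j$ for $i<j$, which already carry coefficient $2$ on $\alpha_j,\ldots,\alpha_r$ and hence must be excluded from $N(\sigma)$ at the outset, and the minimal-height descent $\gamma\mapsto\gamma-\alpha_i$ has to be organized so that the doubled simple coefficient lands on the correct index. Everything else is bookkeeping with the two displayed identities.
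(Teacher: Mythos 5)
Your proof is correct, but it takes a genuinely different route from the paper's. The paper argues directly: it computes $\sigma(\lambda+\rho)-\rho-\mu$ for $\sigma$ of length $0$, $1$, and $2$, observes that $s_1$ and the consecutive products $s_{j-1}s_j$, $s_js_{j-1}$ each produce a negative simple-root coefficient, and then asserts by an induction on $\ell(\sigma)$ (whose inductive step is only sketched) that no element containing such a factor in a reduced expression can lie in $\A(\lambda,\mu)$. You instead work with the identity $\sigma(\lambda+\rho)-\rho-\mu=(\lambda-\mu)-\ell(\w_1-\sigma\w_1)-\sum_{\alpha\in N(\sigma)}\alpha$: the $\a_1$-coefficient bound $(1+2k)-\ell-1<0$ forces $\sigma$ into the stabilizer $\langle s_2,\dots,s_r\rangle$ of $\w_1$, after which membership in $\A(\lambda,\mu)$ becomes exactly the condition that every simple root occurs with coefficient at most $1$ in $\sum_{\alpha\in N(\sigma)}\alpha$, and the biconvexity of inversion sets turns that into ``$N(\sigma)$ is an antichain of simple roots,'' i.e.\ $\sigma$ is a product of pairwise nonconsecutive $s_i$ with $i\ge 2$. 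What your approach buys is uniformity in $\ell(\sigma)$ — the induction the paper only gestures at is replaced by a single structural argument — together with the stronger ``if and only if'' conclusion and a conceptual explanation of why nonconsecutive products are exactly the answer; the minimal-height/closedness argument correctly handles the type-$B$ roots $\varepsilon_i$ and $\varepsilon_i+\varepsilon_j$ whose expansions carry coefficient $2$. What the paper's route buys is elementarity: it needs nothing beyond the explicit action of short Weyl group elements on $\rho$, and its length-$\le 2$ computations are reused verbatim in the proof of part 3 of Theorem \ref{thm:mainB}. Your reduction adapts to that case as well (there $\lambda-\mu$ has $\a_2$-coefficient $2k+1$, which is precisely what admits the extra factor $s_2s_3$).
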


\begin{proof}
Let $\l=\ell\w_1$ and $\mu=(\ell-1-4k)\w_1+2kw_2$ for some $k\in \ZZ{\geq 0}.$ First, we note that $\ell-1=\ell-1-4k+2(2k)$ so $\ell > 2k$ and $\l\succ\mu$.

For $r\geq 3$, note that $\ell\w_1=\ell(\a_1+\cdots+\a_r)$. We claim that the only elements $\sigma\in W$ for which $\wp(\sigma(\ell\w_1+\rho)-\rho-(\ell-1-4k)\w_1-2k\w_2)>0$ are $\sigma=s_{i_1}s_{i_2}\cdots s_{i_m}$ such that $i_1,i_2,\ldots,i_m$ are nonconsecutive integers between 2 and $r$ (inclusive).

\noindent$(\Leftarrow)$ Let $\sigma = 1$, then $1(\l + \rho) - (\mu+\rho) = \l-\mu=\w_1+2k\a_1$ is a nonnegative $\ZZ$-linear combination of positive roots. Thus $1 \in \mathcal A(\l,\mu)$. Proposition \ref{prop:sinonconsec} implies that if $\sigma=s_{i_1}s_{i_2}\cdots s_{i_m}$ for some nonconsecutive integers $2 \leq i_1,\ldots,i_m \leq r$, then $\sigma\in\mathcal{A}(\l,\mu)$.

\noindent$(\Rightarrow)$ Suppose $\sigma\in\mathcal A(\l,\mu)$. We proceed by induction on $\ell(\sigma)$. If $\ell(\sigma)=0$, then $\sigma=1$, which satisfies the needed condition. If $\ell(\sigma)=1$, then $\sigma=s_i$ for some $1 \le i \le r$. If $i=1$, then $s_{1}(\l + \rho) - (\mu+\rho) =  \w_1+2k\a_1-(\ell+1)\a_1$, which implies $s_{1} \notin \mathcal A(\l,\mu)$. Thus, $\sigma\in\mathcal{A}(\l,\mu)$ cannot contain $s_{1}$ in its reduced word expression.
If $1<i\leq r$, then $s_{i}(\l + \rho) - (\mu+\rho) =\w_1+2k\a_1-\a_i$, and   $s_i\in\mathcal{A}(\l,\mu)$ and $s_i$ is of the required form.

If $\ell(\sigma)=2$, then $\sigma=s_is_j$ for distinct integers $i,j$ satisfying $1 < i,j \leq r$. 
Without loss of generality, assume $i < j$. 
If $i,j$ are consecutive integers, then
$i = j-1$, with $1 < i,j < r$ or $i = r-1$ and $j = r$. For the case $1<i,j<r$, note
    $s_{j-1}s_{j}(\l + \rho) - (\mu+\rho )
         = \l-\mu-2\alpha_{j-1}-\alpha_{j}$. 
For the case $i=r-1$ and $j=r$, note
    $s_{r-1}s_{r}(\l + \rho) - (\mu+\rho)
         = \l-\mu-2\alpha_{r-1}-\alpha_{r}$.
None of these can be written as a nonnegative $\ZZ$-linear combination of positive roots. Thus, $s_{r-1}s_r$, $s_{r}s_{r-1}$, $s_{j-1}s_j$, $s_{j}s_{j-1} \notin \mathcal A(\l,\mu)$.
Moreover, any $\sigma\in W$ containing $s_{j}s_{j-1}$ or $s_{j-1}s_{j}$ in its reduced word expression cannot be in $\mathcal A(\l,\mu)$ for all $2<j\leq r$. The case where $i,j$ are nonconsecutive was already considered in Proposition \ref{prop:sinonconsec}.
\end{proof}

Before stating our next result we recall that the Fibonacci numbers are defined by the recurrence relation $F_n=F_{n-1}+F_{n-2}$ for all $n\geq 3$ and $F_1=F_2=1$. 
\begin{corollary}\label{cor:Bnonconsprodcount}
In type $B_r$ with $r\geq 3$, if $\ell$ is an odd positive integer and  $k\in \ZZ_{\geq 0}$, then $\left|\A(\ell\w_1,(\ell-1-4k)\w_1+2k\w_2)\right| = F_{r+1}$. 
\end{corollary}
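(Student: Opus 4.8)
The plan is to reduce the count to a standard enumeration of independent sets in a path. By Proposition~\ref{lem:Bnonconsprod}, the set $\A(\ell\w_1,(\ell-1-4k)\w_1+2k\w_2)$ consists exactly of the products $s_{i_1}s_{i_2}\cdots s_{i_m}$ (with $m\ge 0$, the empty product being the identity) whose indices $i_1,\dots,i_m$ form a set of pairwise nonconsecutive integers in $\{2,3,\dots,r\}$. Since $|i-j|>1$ implies $s_is_j=s_js_i$, such a product depends only on the underlying set $S=\{i_1,\dots,i_m\}$, so $S\mapsto\prod_{i\in S}s_i$ is a well-defined surjection from the collection of subsets of $\{2,\dots,r\}$ with no two consecutive elements onto the Weyl alternation set.

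First I would check this map is injective. As computed in the proof of Proposition~\ref{prop:sinonconsec}, $\bigl(\prod_{i\in S}s_i\bigr)(\rho)=\rho-\sum_{i\in S}\alpha_i$, and distinct subsets of $\{2,\dots,r\}$ yield distinct vectors $\sum_{i\in S}\alpha_i$, since the $\alpha_i$ are linearly independent; hence they yield distinct Weyl group elements. Therefore $\left|\A(\ell\w_1,(\ell-1-4k)\w_1+2k\w_2)\right|$ equals the number of subsets of the $(r-1)$-element set $\{2,3,\dots,r\}$ containing no two consecutive elements.

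It then remains to do the combinatorics. Let $a_n$ denote the number of subsets with no two consecutive elements of a set of $n$ consecutive integers. Conditioning on whether the largest element belongs to the subset gives $a_n=a_{n-1}+a_{n-2}$ for $n\ge 2$, with $a_0=1$ and $a_1=2$; an easy induction then yields $a_n=F_{n+2}$ (using $F_1=F_2=1$). Taking $n=r-1$ gives $\left|\A(\ell\w_1,(\ell-1-4k)\w_1+2k\w_2)\right|=F_{r+1}$, completing the proof.

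I do not anticipate a genuine obstacle here; the only step requiring a moment's attention is the injectivity of $S\mapsto\prod_{i\in S}s_i$, which the action on $\rho$ settles immediately. As a sanity check, for $r=3$ the set is $\{1,s_2,s_3\}$, of size $3=F_4$, and for $r=4$ it is $\{1,s_2,s_3,s_4,s_2s_4\}$, of size $5=F_5$.
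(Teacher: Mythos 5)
Your proof is correct and follows essentially the same route as the paper's: both reduce the count to subsets of $\{2,\dots,r\}$ with no two consecutive elements and obtain the Fibonacci recursion by conditioning on whether the largest available generator appears. The only difference is that you explicitly verify injectivity of the map $S\mapsto\prod_{i\in S}s_i$ via the action on $\rho$, a point the paper's induction leaves implicit.
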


\begin{proof}
We show $\left|\mathcal{A}\left(\lambda,\mu\right)\right| = F_{r+1}$ for $B_r$ using induction. First, if $r=2$, then $\left|\mathcal{A}\left(\lambda,\mu\right)\right|=\left|\{1,s_2\}\right| = 2 = F_3$. If $r=3$, then $\left|\mathcal{A}\left(\lambda,\mu\right)\right|=\left|\{1,s_2,s_3\}\right| = 3 = F_4$. Assume for all $r$, $4\leq r \leq m$, $\left|\mathcal{A}\left(\lambda,\mu\right)\right|=\left|\{1,s_2\}\right| = F_{r+1}$, the $(r+1)^{\text{st}}$ Fibonacci number. We consider the case when $r = m+1$. All elements $\sigma\in\mathcal{W}$ consisting of nonconsecutive products of the generators $s_2,s_3,\dots,s_m$ will either contain $s_{m+1}$ or not. If they don't contain $s_{m+1}$, then by our inductive hypothesis, the number of Weyl group elements consisting of nonconsecutive products of $s_2,s_3,\dots,s_m$ is given by $F_{m+1}$. If the Weyl group elements contain $s_{m+1}$, then we must count the number of nonconsecutive products of $s_2,s_3,\dots,s_{m-1}$, which by our inductive hypothesis is given by $F_m$. Therefore, $\left|\mathcal{A}\left(\lambda,\mu\right)\right|= F_{m}+F_{m+1} = F_{m+2}$. 
\end{proof}
The following result establishes part 3 of Theorem \ref{thm:mainB}.
\begin{proposition}
In type $B_r$ with $r>3$ and an odd, positive integer $\ell$, all $\sigma \in \A(\ell\w_1,(\ell-1-4j-6k)\w_1+2j\w_2+2k\w_3)$ where $j\in \ZZ_{\geq 0}$, $k\in \ZZ_{>0}$, $\ell>2j$, and $\ell > 4k$ are either products of nonconsecutive $s_i$'s with $1<i\leq r$ or products of the form $s_{i_1}\dots s_{i_m}s_2s_3$ with nonconsecutive $s_{i_p}$'s for $4<i_p\leq r$.
\end{proposition}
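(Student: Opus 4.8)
The plan is to show that any $\sigma\in\A(\ell\w_1,\mu)$, with $\mu=(\ell-1-4j-6k)\w_1+2j\w_2+2k\w_3$, must be of one of the two prescribed forms, by proceeding by induction on $\ell(\sigma)$, exactly in the spirit of the proof of Proposition \ref{lem:Bnonconsprod}. First I would record the base data: $\sigma(\lambda)=\lambda=\ell(\a_1+\cdots+\a_r)$ for any $\sigma$ built from $s_i$ with $i>1$, and converting $\mu$ into the $\a$-basis gives $\lambda-\mu=\w_1+(2j+3k)\a_1+k\a_2$ (using $\w_2=\a_1$-part contributions and $\w_3$ similarly, as in Proposition \ref{prop:sinonconsec}). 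Then for a general $\sigma=s_{i_1}\cdots s_{i_m}$ with all $i_p>1$ I have $\sigma(\lambda+\rho)-\rho-\mu = \lambda-\mu-\bigl(\rho-\sigma(\rho)\bigr)$, and $\rho-\sigma(\rho)$ is a nonnegative combination of simple roots whose coefficients I can control via the standard fact that $\rho-\sigma(\rho)=\sum_{\beta\in\Phi^+\cap\sigma^{-1}\Phi^-}\beta$.

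The core of the argument is the $\Rightarrow$ direction. I would handle small length cases by hand: $\ell(\sigma)=1$ kills $s_1$ (coefficient of $\a_1$ becomes negative since $\lambda-\mu$ has $\a_1$-coefficient $2j+3k$ but $s_1$ subtracts $\ell+1>2j+3k$ — here one uses $\ell>2j$ and $\ell>4k$), and leaves all $s_i$, $i>1$, which are of the allowed form. For $\ell(\sigma)=2$ the new phenomenon compared to Proposition \ref{lem:Bnonconsprod} appears: because $\mu$ now carries a nonzero $\w_3$ (equivalently a nonzero $\a_2$-coefficient $k\ge 1$ in $\lambda-\mu$), the element $s_2s_3$ survives — one computes $s_2s_3(\lambda+\rho)-\rho-\mu=\lambda-\mu-2\a_2-\a_3=\w_1+(2j+3k)\a_1+(k-1)\a_2+?$, wait, more precisely $-2\a_2-\a_3$ which is still nonnegative exactly because $k\ge 1$. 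By contrast $s_3s_2$, $s_{j-1}s_j$, $s_js_{j-1}$ for $j>3$, $s_{r-1}s_r$, $s_rs_{r-1}$ all produce a strictly negative coefficient somewhere, hence are excluded, and any word containing such a consecutive pair as a subword is excluded too (the subtracted vector only grows). So after length $2$ the surviving ``non-nonconsecutive'' seed is precisely $s_2s_3$.

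Next I would show that the only reduced words that use $s_2$ and $s_3$ adjacently and still lie in $\A$ are those of the form $s_{i_1}\cdots s_{i_m}s_2s_3$ with the $i_p>4$ nonconsecutive: concretely, one argues that $s_2$ and $s_3$ must occur exactly once each, adjacent, with $s_2$ preceding $s_3$ (any other arrangement reintroduces a forbidden consecutive pattern, e.g. $s_3s_2$, $s_2s_3s_2$, $s_3s_4$, or forces the $\a_2$- or $\a_3$-coefficient negative); then the remaining letters commute past $s_2s_3$ and form a nonconsecutive product among $\{s_5,\ldots,s_r\}$, because $s_4$ would be consecutive with $s_3$. Combined with the case where $s_2,s_3$ are \emph{not} both present — which reduces to Proposition \ref{lem:Bnonconsprod}'s conclusion that $\sigma$ is a nonconsecutive product of the $s_i$, $i>1$ — this exhausts all possibilities and yields the two listed families. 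The $\Leftarrow$ direction (that these elements really are in $\A$) follows from Proposition \ref{prop:sinonconsec} for the nonconsecutive products, and from a direct computation $s_{i_1}\cdots s_{i_m}s_2s_3(\lambda+\rho)-\rho-\mu=\w_1+(2j+3k)\a_1+(k-1)\a_2-\sum_p\a_{i_p}+(\text{nonneg.})$, which is a nonnegative $\ZZ$-combination of positive roots precisely because $k\ge 1$.

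The main obstacle I anticipate is the bookkeeping in the inductive step when $\sigma$ has large length and mixes a nonconsecutive ``tail'' with the $s_2s_3$ block: one needs a clean way to see that the coefficient vector of $\rho-\sigma(\rho)$ in the simple roots never exceeds $\lambda-\mu$ coordinatewise unless $\sigma$ is of the stated form, and the difficulty is that $\rho-\sigma(\rho)$ does not simply add one $\a_{i}$ per letter — reflections interact. The cleanest route is probably to strengthen the inductive hypothesis so that one tracks the full support of $\rho-\sigma(\rho)$ and argues that any consecutive pair $s_i s_{i\pm1}$ other than the single allowed $s_2 s_3$ forces a coordinate of $\rho-\sigma(\rho)$ to be at least $2$ in a position where $\lambda-\mu$ has coefficient $\le 1$ (namely the $\a_i$ for $i\ge 3$, all of which are $0$ in $\lambda-\mu$ except the $\a_2$-coordinate which is $k$), mirroring the length-2 analysis but propagated along the word. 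I would also double-check the edge constraints $\ell>2j$, $\ell>4k$, $k\ge 1$ are exactly what is needed and no more.
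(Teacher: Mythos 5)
Your approach is essentially the paper's: both arguments proceed by induction on $\ell(\sigma)$, dispose of $s_1$ and of every consecutive pair other than $s_2s_3$ at lengths $1$ and $2$, invoke Proposition \ref{prop:sinonconsec} for the products of nonconsecutive reflections, and verify the elements $s_{i_1}\cdots s_{i_m}s_2s_3$ by a direct computation using the fact that these $s_{i_p}$ commute with $-2\a_2-\a_3$. One arithmetic correction: writing $\mu$ in the root basis gives $\lambda-\mu=\w_1+(2j+4k)\a_1+2k\a_2$ (take $b=2j$ and $c=2k$ in Proposition \ref{prop:sinonconsec}, so the coefficients are $b+2c=2j+4k$ and $c=2k$), not $\w_1+(2j+3k)\a_1+k\a_2$; the slip happens not to affect any sign analysis --- the relevant $\a_2$-coefficient after subtracting $2\a_2$ is $2k-1$, nonnegative exactly when $k\geq 1$ --- but your displayed computations should be fixed accordingly. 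Finally, the ``obstacle'' you flag for long words, namely controlling $\rho-\sigma(\rho)$ coordinatewise once a nonconsecutive tail is mixed with the $s_2s_3$ block, is indeed the part of the argument that is not reduced to a finite check; the paper's own proof stops at $\ell(\sigma)=2$ and asserts the extension to longer words in the same way, so your plan is at parity with the published one, and your suggestion to track the support of $\rho-\sigma(\rho)$ via $\rho-\sigma(\rho)=\sum_{\beta\in\Phi^+\cap\sigma^{-1}\Phi^-}\beta$ is a reasonable way to make that step rigorous.
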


\begin{proof}
\noindent$(\Leftarrow)$ Let $\sigma = 1$. Then $1(\lambda + \rho) - \rho - \mu = \w_1 + (2j+4k)\a_1+2k\a_2$, which is a nonnegative $\ZZ$-linear combination of positive roots. Thus $1 \in \mathcal A(\l,\mu)$. Note that by Proposition \ref{prop:sinonconsec}, $\sigma = s_{i_1}s_{i_2}\cdots s_{i_m}\in \mathcal A(\l,\mu)$. We need only show that $\sigma = s_{i_1}s_{i_2}\cdots s_{i_m}s_2s_3 \in \mathcal A(\l,\mu)$. Note that $s_{i_1}\cdots s_{i_m}s_2s_3(\lambda + \rho) = s_{i_1}\cdots s_{i_m}(\lambda + \rho - 2\a_2 - \a_3) = \lambda + \rho - \sum_{p=1}^k \a_{i_p} + s_{i_1}\cdots s_{i_k}(-2\a_2 - \a_3)$. Since the $s_{i_p}$'s are a collection of nonconsecutive integers between $5$ and $r$, inclusive, $s_{i_1}\cdots s_{i_m}(-2\a_2 - \a_3) = -2\a_2 - \a_3$. Hence, $s_{i_1}\cdots s_{i_m}s_2s_3(\lambda + \rho) - \rho - \mu = \w_1 + (2j+4k)\a_1 + (2k-2)\a_2 - \a_3 - \sum_{p=1}^k \a_{i_p} \in \mathcal A(\l,\mu)$ if $k \neq 0$.
  
\noindent$(\Rightarrow)$ Suppose $\sigma \in \mathcal{A}(\lambda,\mu)$. Note that $\lambda - \mu = \w_1 + (2j+4k)\a_1 + 2k\a_2$. We proceed by induction on $\ell(\sigma)$. If $\ell(\sigma)=0$, then $\sigma = 1$, which satisfies the needed condition. If $\ell(\sigma) =1$, then $\sigma = s_i$ for some $1 \leq i \leq r$. If $i = 1$, then $s_1(\lambda + \rho) - \mu - \rho = \w_1 + (2j+4k)\a_1 + 2k\a_2 - (\ell+1)\a_1 \notin \mathcal{A}(\lambda,\mu)$. Thus $\sigma \in \mathcal{A}(\lambda,\mu)$ cannot contain $s_1$ in its reduced word expression. If $1 < i \leq r$, then $s_i(\lambda+\rho) - \mu - \rho = \w_1 + (2j+4k)\a_1 + 2k\a_2 - \a_i$ and so $s_i \in \mathcal{A}(\lambda,\mu)$ and $s_i$ is of the required form. 
  
  Now suppose $\ell(\sigma) = 2$. Then $\sigma = s_is_m$ for distinct integers $i,m$ such that $1 < i,m \leq r$. Without loss of generality, assume $i < m$. If $i = 2$ and $m = 3$, then we have $s_2s_3(\lambda + \rho) - \mu - \rho = \w_1 + (2j+4k)\a_1 + 2k\a_2 - 2\a_2 - \a_3$, which means $s_2s_3 \in \mathcal{A}(\lambda,\mu)$ if $k\neq 0$ and $s_2s_3$ is of the required form. Note that $s_3s_2(\lambda + \rho) - \mu - \rho = \lambda - \mu - \a_2 - 2\a_3$, which is not in $\mathcal{A}(\lambda,\mu)$ and is not of the required form. If $i,m$ are any other consecutive pair of integers, then $i = m-1$ with either $2 < i,m < r$ or $i = r-1$ and $m=r$. Suppose $2 < i,m < r$. Then $s_{m-1}s_m(\lambda+\rho) - \mu - \rho = \lambda - \mu - 2\a_{m-1} - \a_m$. If $i = r-1$ and $m = r$, then $s_{r-1}s_r(\lambda + \rho) - \mu - \rho = \lambda - \mu - 2\a_{r-1} - \a_r$. Note that neither of these cases can be written as a nonnegative $\mathbb{Z}$-linear combination of positive roots. Thus, $s_{r-1}s_r,s_rs_{r-1},s_{m-1}s_m,s_ms_{m-1} \notin \mathcal{A}(\lambda, \mu)$ for all cases except $s_2s_3$. Moreover, any $\sigma \in W$ containing $s_ms_{m-1}$ or $s_{m-1}s_m$ in its reduced word expression cannot be in $\mathcal{A}(\lambda,\mu)$ for all $2 < m \leq r$, except $s_2s_3$. If $i,m$ are nonconsecutive, then by Proposition \ref{prop:sinonconsec}, $s_is_m \in \mathcal{A}(\lambda,\mu)$.
  \end{proof}

\begin{corollary}
In type $B_r$ with $r>3$, if $\ell$ is an odd positive integer and $j\in \ZZ_{\geq 0}$, $k\in \ZZ_{>0}$, with $\ell>2j$, and $\ell > 4k$ , then $\left|\A(\ell\w_1,(\ell-1-4j-6k)\w_1+2j\w_2+2k\w_3)\right|=2F_{r}$.
\end{corollary}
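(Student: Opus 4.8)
The plan is to feed the explicit description of the alternation set from the preceding proposition into a Fibonacci count. That proposition says
\[
\A\bigl(\ell\w_1,(\ell-1-4j-6k)\w_1+2j\w_2+2k\w_3\bigr)=\mathcal F_1\cup\mathcal F_2,
\]
where $\mathcal F_1$ consists of the nonconsecutive products $s_{i_1}\cdots s_{i_m}$ with $1<i_p\le r$ (including the identity), and $\mathcal F_2$ consists of the elements $s_{i_1}\cdots s_{i_m}s_2s_3$ with $\{i_1,\dots,i_m\}$ a nonconsecutive subset of $\{5,\dots,r\}$ (including the empty subset, giving $s_2s_3$). First I would record the two standard facts that make the bookkeeping honest: in type $B_r$ the reflections $s_i,s_j$ commute whenever $|i-j|>1$, and the \emph{support} of $w\in W$ (the set of simple reflections occurring in any reduced word for $w$) is a well-defined invariant of $w$. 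It follows that a nonconsecutive product $s_{i_1}\cdots s_{i_m}$ is reduced of length $m$ with support exactly $\{i_1,\dots,i_m\}$, and that $s_{i_1}\cdots s_{i_m}s_2s_3$ with all $i_p\ge 5$ is reduced of length $m+2$ with support $\{2,3\}\cup\{i_1,\dots,i_m\}$, since each $s_{i_p}$ commutes with both $s_2$ and $s_3$.

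With these facts in hand, counting each family reduces to counting independent sets in a path graph. Since distinct subsets of $\{2,\dots,r\}$ give elements of $\mathcal F_1$ with distinct supports, $|\mathcal F_1|$ is the number of independent sets in the path on the $r-1$ vertices $2,\dots,r$, which is $F_{r+1}$ — this is precisely the enumeration already carried out in Corollary~\ref{cor:Bnonconsprodcount}. Likewise, distinct subsets of $\{5,\dots,r\}$ give elements of $\mathcal F_2$ with distinct supports, so $|\mathcal F_2|$ is the number of independent sets in the path on the $r-4$ vertices $5,\dots,r$, namely $F_{r-2}$ (when $r=4$ this path is empty and $\mathcal F_2=\{s_2s_3\}$, consistent with $F_2=1$). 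The two families are disjoint: every element of $\mathcal F_2$ has both $2$ and $3$ in its support, while no element of $\mathcal F_1$ can, because $2$ and $3$ are consecutive. Hence
\[
\left|\A\bigl(\ell\w_1,(\ell-1-4j-6k)\w_1+2j\w_2+2k\w_3\bigr)\right|=|\mathcal F_1|+|\mathcal F_2|=F_{r+1}+F_{r-2}.
\]

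To finish, I would apply the Fibonacci recurrence twice: $F_{r+1}+F_{r-2}=(F_r+F_{r-1})+F_{r-2}=F_r+(F_{r-1}+F_{r-2})=2F_r$, as claimed.

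The only genuinely delicate point is ensuring the two counts are legitimate — that distinct subsets yield distinct Weyl group elements and that $\mathcal F_1$ and $\mathcal F_2$ do not overlap. The principle that the support of a reduced word is an invariant, together with the commutation of nonadjacent generators, settles both at once; the rest is the standard path/independent-set count already appearing in Corollary~\ref{cor:Bnonconsprodcount}. An alternative would be to mimic the induction of that corollary directly, splitting on whether $s_r$ occurs in $\sigma$; I expect that route to be equivalent but more verbose, so I would present the version above.
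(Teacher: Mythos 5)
Your proposal is correct and follows essentially the same route as the paper: both split the alternation set into the family of nonconsecutive products (counted by $F_{r+1}$ as in Corollary \ref{cor:Bnonconsprodcount}) and the family ending in $s_2s_3$ with nonconsecutive factors from $\{5,\dots,r\}$ (counted by $F_{r-2}$), then apply $F_{r+1}+F_{r-2}=2F_r$. Your added remarks on supports and disjointness are a welcome tightening of details the paper leaves implicit, but the argument is the same.
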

 \begin{proof} With $\lambda,\mu$ as given, we show $\left|\mathcal{A}\left(\lambda,\mu\right)\right| = 2F_{r}$ for $B_r$. As in Corollary \ref{cor:Bnonconsprodcount}, we know there are $F_{r+1}$ elements coming from the products of nonconsecutive $s_i$'s, $1 < i \leq r$. By a similar argument, there are $F_{r-2}$ elements $\sigma = s_{i_1}s_{i_2}\cdots s_{i_m}s_2s_3$ for some nonconsecutive integers $4 < i_1, i_2, \dots, i_m \leq r$. Thus, $\left|\mathcal{A}\left(\lambda,\mu\right)\right| = F_{r+1} + F_{r-2} = F_r + F_{r-1} + F_{r-2} = 2F_{r}$.
\end{proof}

The condition for $B_r$ such that $\ell$ is even and $r$ is odd is quite different from the three conditions we posed in Theorem \ref{thm:mainB}. When r is odd and $\ell$ is even, the constraint $\ell - 1 = \sum_{1\leq i < r} im_i + \frac{rm_r}{2}$ forces $m_r = 2(2k+1)$, since $m_i$ is even for all $i$. The Weyl alternation sets $\mathcal{A}\left(\lambda,\mu\right)$ for $B_3$ are easy to compute and remain quite small. But when $r = 5$, the Weyl alternation sets grow very quickly. In the three conditions discussed in Theorem \ref{thm:mainB}, the elements of the Weyl alternation sets had a nice pattern of being products of mostly nonconsecutive simple reflections; however, we don't see similar patterns for the case when $r$ is odd and $\ell$ is even, as seen in the tables in Appendix \ref{alttables}. As the coefficient $m_r$ increases, the elements in the Weyl alternation set $\mathcal{A}(\lambda,\mu)$ become even more complicated.

\subsection{Low rank examples: \texorpdfstring{$B_2$}{B2} and \texorpdfstring{$B_3$}{B3}} \label{sec:posetsB}
In this section we consider the Lie algebra of types $B_2$ and $B_3$. By fixing the weight $\mu$ and varying the weight $\lambda$ we can make a diagram over the fundamental weight lattice of the Lie algebra by coloring two weights with the same color when they have the same Weyl alternation set.

Figure \ref{fig:diagramB2mu=0} presents the Weyl alternation diagram when $\mu=0$. Observe that there are 24 distinct colored dots and the integral weights without a colored dot represent those weights that have empty Weyl alternation set. This illustrates the 25 distinct Weyl alternation types given in~\cite[Theorem 2.3.1]{PHThesis}. 

\begin{figure}[H]
    \centering
    \includegraphics[width=2in]{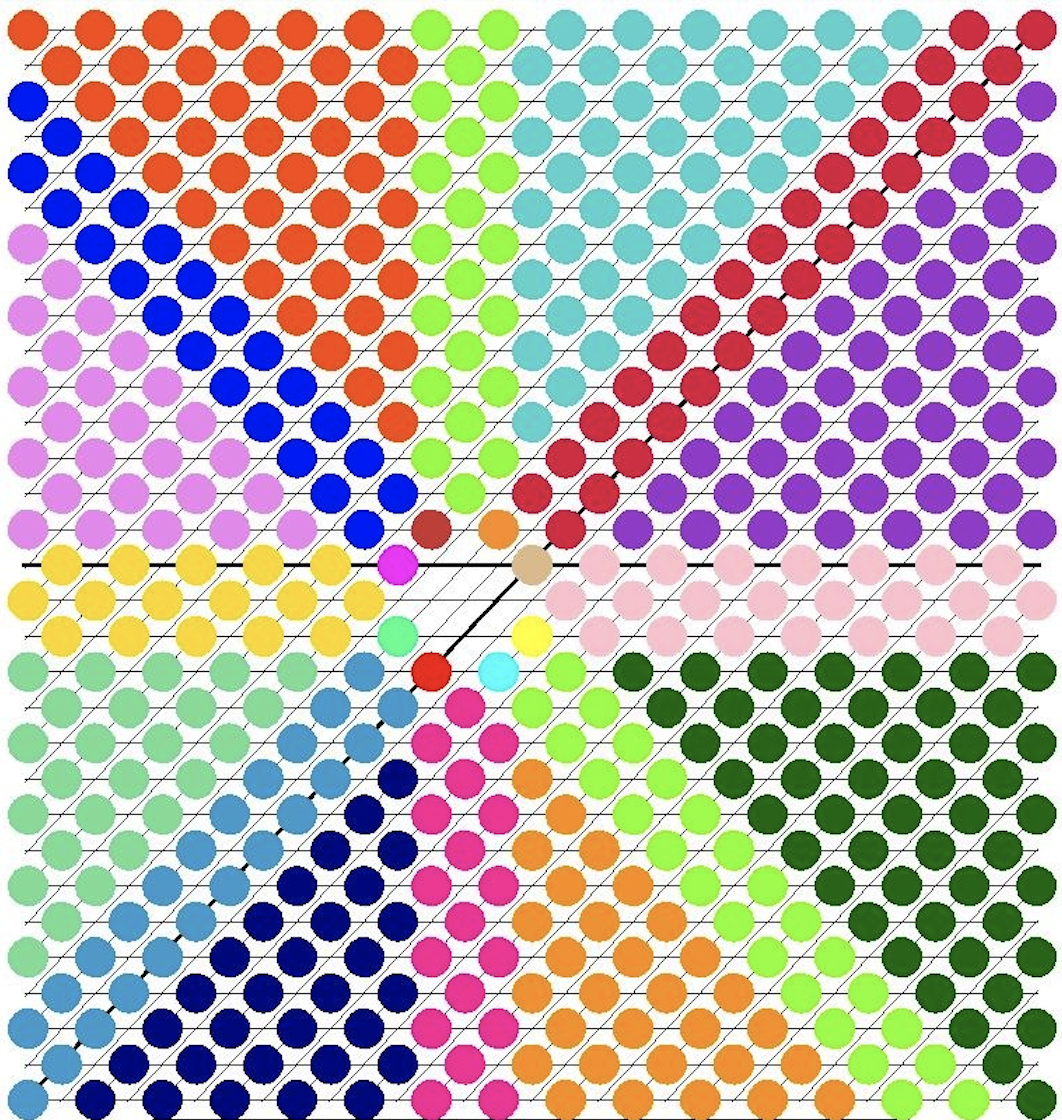}
    \includegraphics[width=2.5in]{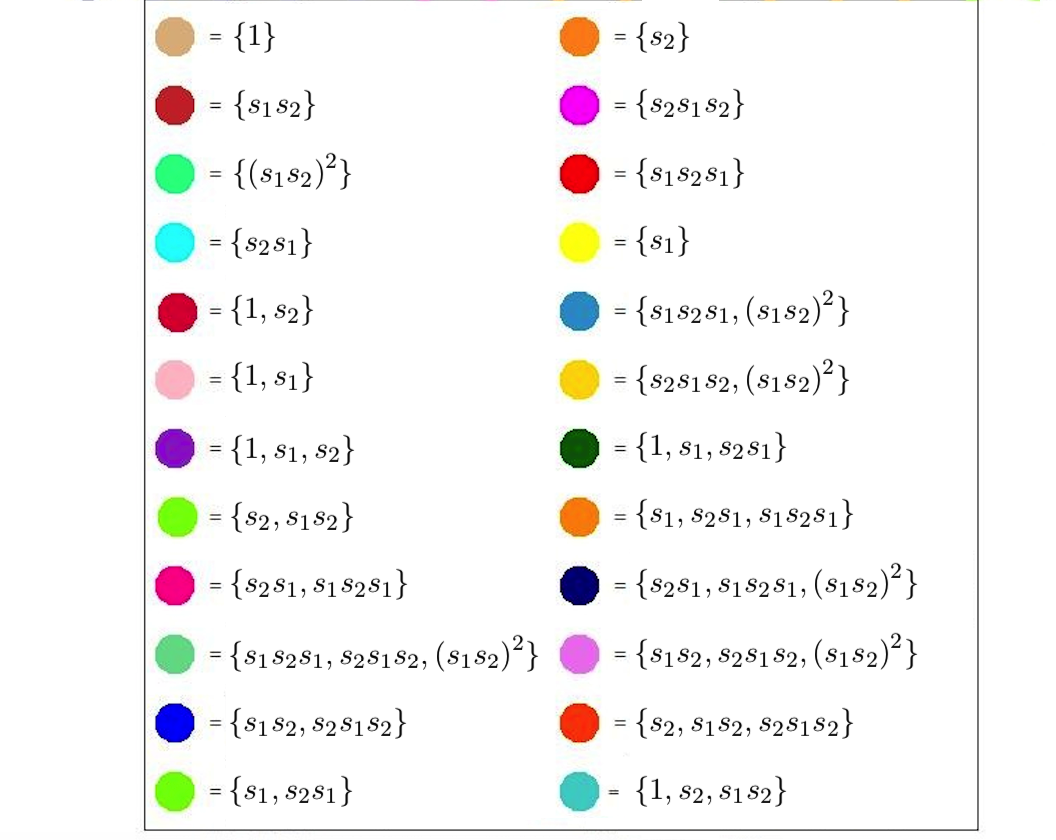}
    \caption{Weyl alternation diagram for the Lie algebra of type $B_2$ when $\mu=0$ and $\lambda=m\w_1+n\w_2$ with $m,n\in\mathbb{N}$.}
    \label{fig:diagramB2mu=0}
\end{figure}

Now we fix $r=3$. For each pair $(\ell\w_1,\mu)$ for which $m(\ell\w_1,\mu)=1$, we have that $\mu = m_1\varpi_1+m_2\varpi_2+m_3\varpi_3$ and $m_1,m_2,m_3\in2\mathbb{Z}_{\geq0}$ so $\ell = m_1+2m_2+\frac{3}{2}m_3$. We also have that 
\begin{align*}
    1\left(\lambda+\rho\right)-\left(\mu+\rho\right) &= \lambda-\mu
                               = \left(m_2+m_3+1\right)\a_1+\left(\frac{m_3+2}{2}\right)\a_2+\a_3\\
    s_2\left(\lambda+\rho\right)-\left(\mu+\rho\right) &= \lambda-\mu - \a_2
                               = \left(m_2+m_3+1\right)\a_1+\left(\frac{m_3}{2}\right)\a_2+\a_3\\  
    s_3\left(\lambda+\rho\right)-\left(\mu+\rho\right) &= \lambda-\mu-\a_3
                               = \left(m_2+m_3+1\right)\a_1+\left(\frac{m_3+2}{2}\right)\a_2\\
    s_2s_3\left(\lambda+\rho\right)-\left(\mu+\rho\right) &= \lambda-\mu-2\a_2-a_3
                               = \left(m_2+m_3+1\right)\a_1+\left(\frac{m_3-2}{2}\right)\a_2\\
    s_3s_2\left(\lambda+\rho\right)-\left(\mu+\rho\right) &= \lambda-\mu-\a_2-3\a_3
                               = \left(m_2+m_3+1\right)\a_1+\left(\frac{m_3}{2}\right)\a_2-2\a_3\\
    s_2s_3s_2\left(\lambda+\rho\right)-\left(\mu+\rho\right) &= \lambda-\mu-3\a_2-3\a_3
                               = \left(m_2+m_3+1\right)\a_1+\left(\frac{m_3-4}{2}\right)\a_2-2\a_3\\  
    s_3s_2s_3\left(\lambda+\rho\right)-\left(\mu+\rho\right) &= \lambda-\mu-2\a_2-4\a_3
                               = \left(m_2+m_3+1\right)\a_1+\left(\frac{m_3-2}{2}\right)\a_2-3\a_3\\
    \left(s_3s_2\right)^2\left(\lambda+\rho\right)-\left(\mu+\rho\right) &= \lambda-\mu-3\a_2-4\a_3
                               = \left(m_2+m_3+1\right)\a_1+\left(\frac{m_3-4}{2}\right)\a_2-3\a_3\\
\end{align*}
Therefore, if $m_3=0$, then $\A(\lambda,\mu)=\{1,s_2,s_3\}$. If $m_3\in 2\mathbb{Z}_{>0}$, then $\A(\lambda,\mu)=\{1,s_2,s_3,s_2s_3\}$. To display when $\sigma\in W$ is an element of $\A(\lambda,\mu)$, we provide Figure~\ref{fig:typeB3}, which show different views of the allowed choices for $\mu$. 

\begin{figure}[H]
\centering
\begin{subfigure}[b]{0.40\textwidth}\centering
    \resizebox{!}{1.55in}{%
  \includegraphics{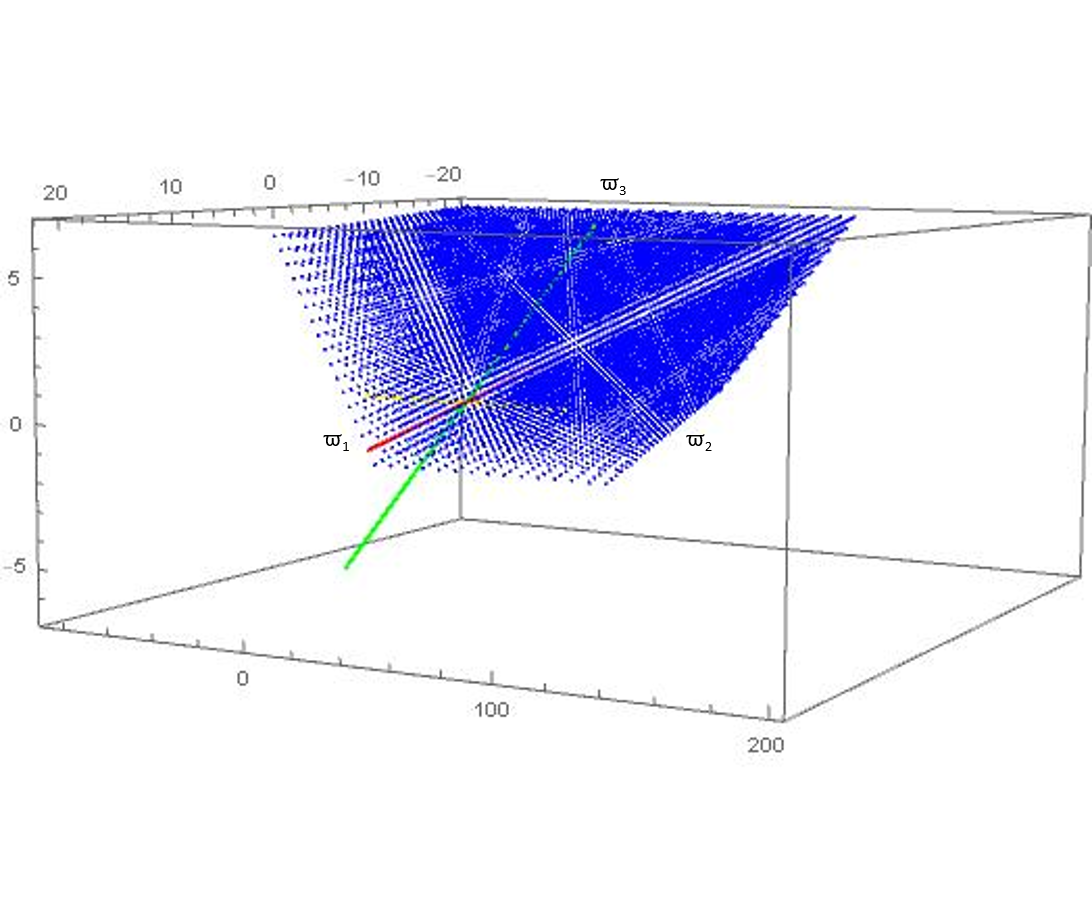}}
  \caption{Standard view in 3D}
  \label{fig:B3-standard}
\end{subfigure}
\qquad
\begin{subfigure}[b]{0.40\textwidth}\centering
    \resizebox{!}{1.55in}{%
  \includegraphics{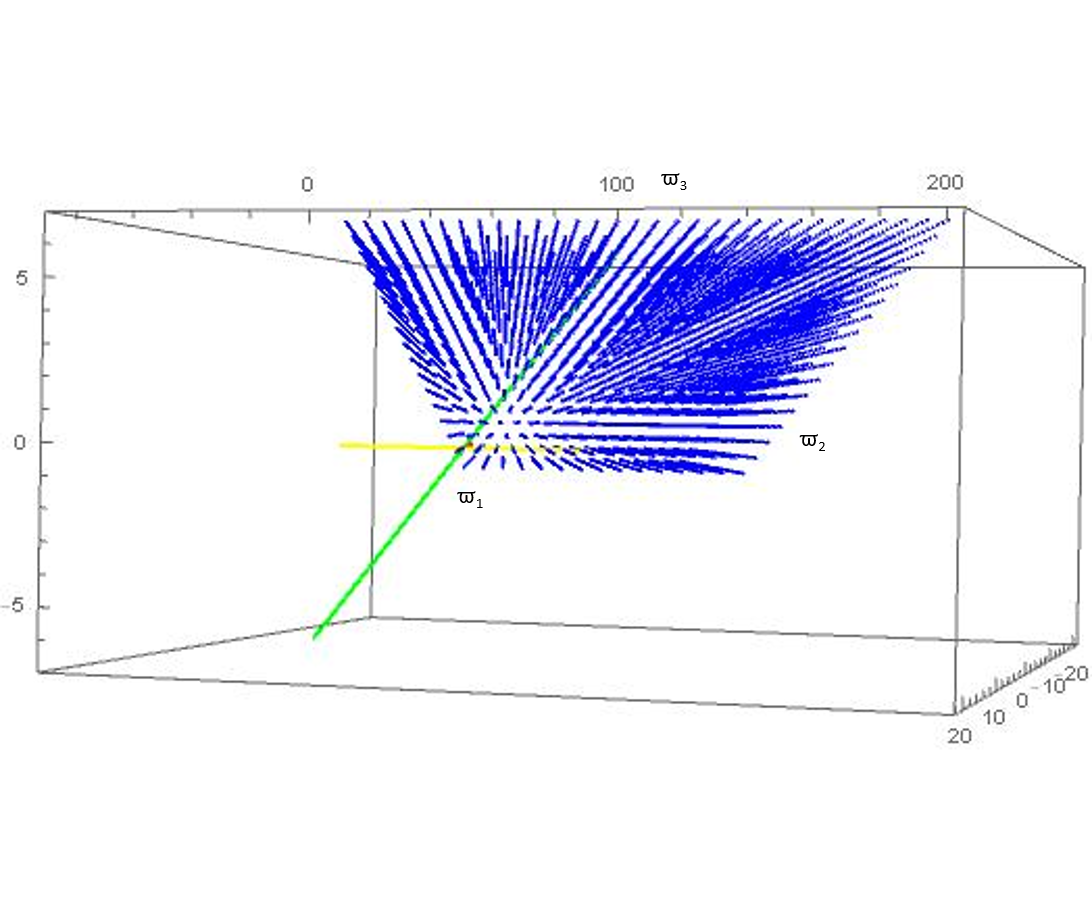}}
  \caption{View from positive $\w_1$-axis in 3D}
  \label{fig:B3-w1axis}
\end{subfigure}\\
\begin{subfigure}[b]{0.40\textwidth}\centering
    \resizebox{!}{1.55in}{%
  \includegraphics{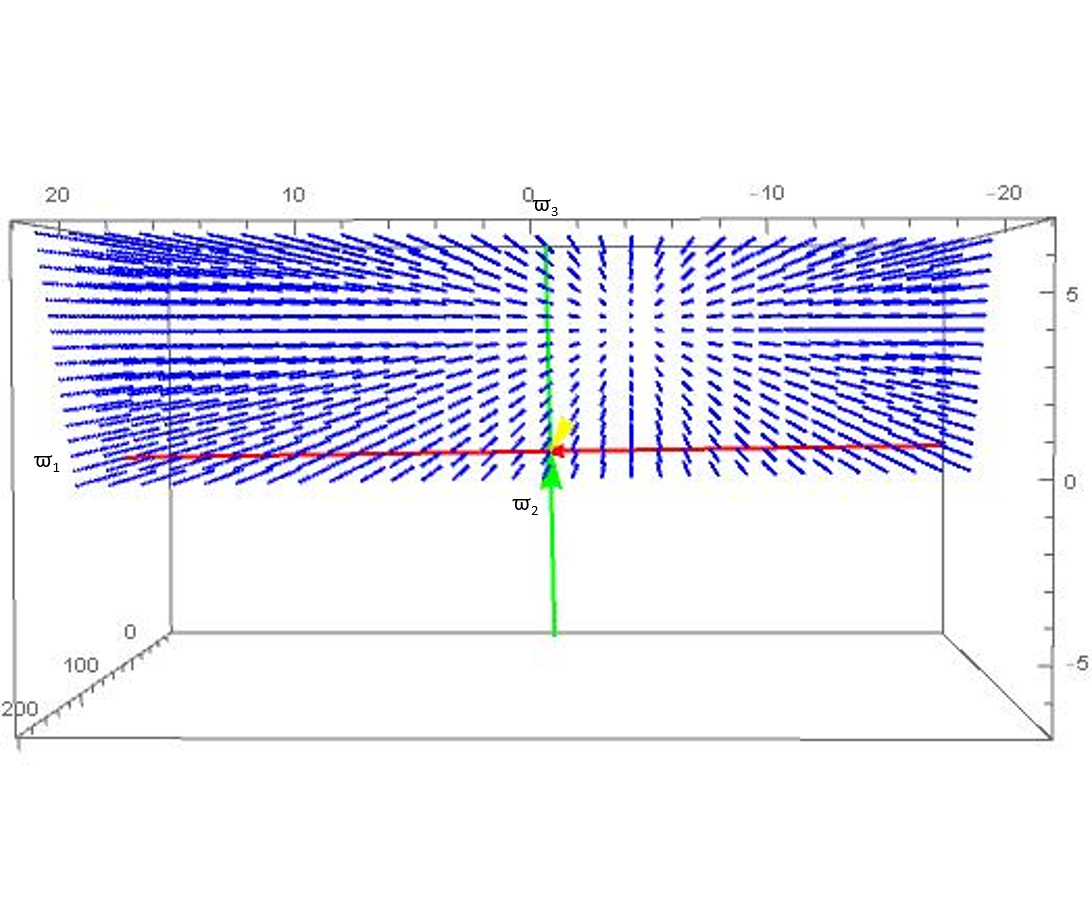}}
  \caption{View from positive $\w_2$-axis in 3D}
  \label{fig:B3-w2axis}
\end{subfigure}
\qquad
\begin{subfigure}[b]{0.40\textwidth}\centering
    \resizebox{!}{1.55in}{%
  \includegraphics{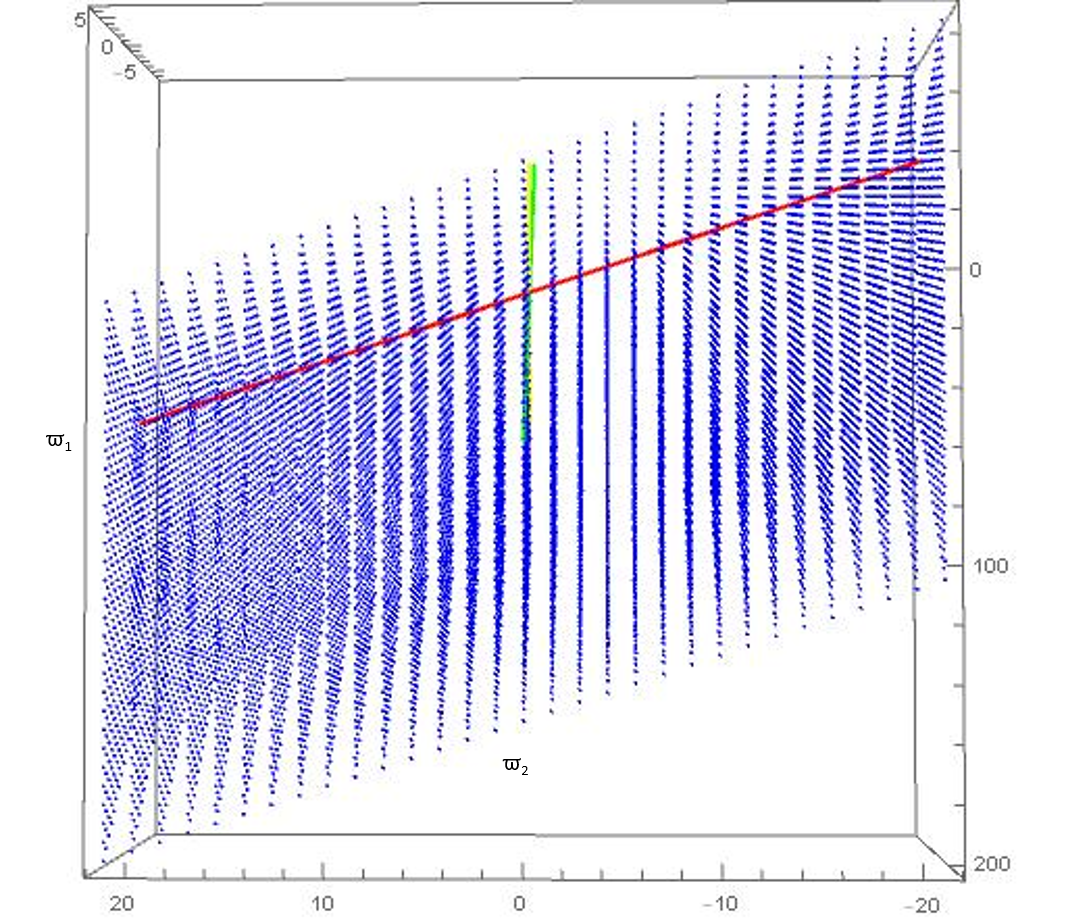}}
  \caption{View from positive $\w_3$-axis in 3D}
  \label{fig:B3-w3axis}  
\end{subfigure}
\caption{Diagrams highlighting weights in the Weyl alternation sets $\A(\lambda,\mu)$ when $\lambda=\ell\w_1$ in the Lie algebra of type $B_3$.}\label{fig:typeB3}
\end{figure}

\section{Type \texorpdfstring{$G_2$}{G2}}\label{sec:G2}
The positive roots of $G_2$ are 
$\Phi^+=\{\a_1,\a_2,\a_1+\a_2,2\a_1+\a_2,3\a_1+\a_2,3\a_1+2\a_2\}$ and the fundamental weights in terms of the simple roots are given by
$\w_1=2\a_1+\a_2$ and
$\w_2=3\a_1+2\a_2$.
\begin{theorem}\label{sets:G2}
Let $\lambda=\ell\w_2$ with $\ell\geq 1$ be a weight of $G_2$. If $\mu=m_1\varpi_1+m_2\varpi_2$, where $m_1,m_2\in\mathbb{Z}_{\geq 0}$ and $3\ell-1=2m_1+3m_2$, then 
$\A(\lambda,\mu)=\{1,s_1\}$ when $\lambda\neq 0$ and $\A(0,\mu)=\emptyset$.    \end{theorem}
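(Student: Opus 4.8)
The plan is to prove the theorem by a direct, finite computation: since the Weyl group $W$ of $G_2$ has only $12$ elements, I would compute $\sigma(\lambda+\rho)-(\mu+\rho)$ for each $\sigma\in W$ and test membership in $\A(\lambda,\mu)$ one element at a time. The test is especially simple here: every positive root of $G_2$ has nonnegative $\a_1$- and $\a_2$-coordinates, while $\a_1,\a_2\in\Phi^+$, and the root lattice of $G_2$ coincides with its weight lattice; hence for a weight written as $c_1\a_1+c_2\a_2$ we have $\wp(c_1\a_1+c_2\a_2)>0$ if and only if $c_1,c_2\in\ZZ_{\ge 0}$. So the whole argument reduces to reading off the signs of two integer coordinates for each $\sigma$.

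First I would normalize the data. From $3\ell-1=2m_1+3m_2$ we get $2m_1\equiv 2\pmod 3$, so $m_1=3n+1$ for some $n\ge 0$, and then $m_2=\ell-1-2n$; nonnegativity of $m_1,m_2$ gives $0\le n\le\frac{\ell-1}{2}$, in particular $n<\ell$. Using $\w_1=2\a_1+\a_2$, $\w_2=3\a_1+2\a_2$ and $\rho=\w_1+\w_2$ I would rewrite $\lambda+\rho=\w_1+(\ell+1)\w_2=(3\ell+5)\a_1+(2\ell+3)\a_2$ and $\mu+\rho=(m_1+1)\w_1+(m_2+1)\w_2=(3\ell+4)\a_1+(2\ell+2-n)\a_2$, where the first coordinate of $\mu+\rho$ uses $2m_1+3m_2=3\ell-1$ and the second uses $m_1+2m_2=2\ell-1-n$.

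Next I would generate the orbit $\{\sigma(\lambda+\rho):\sigma\in W\}$. From the action on fundamental weights one derives $s_1(\a_1)=-\a_1$, $s_1(\a_2)=3\a_1+\a_2$, $s_2(\a_1)=\a_1+\a_2$, $s_2(\a_2)=-\a_2$, i.e.\ $s_1(x\a_1+y\a_2)=(3y-x)\a_1+y\a_2$ and $s_2(x\a_1+y\a_2)=x\a_1+(x-y)\a_2$. Repeatedly applying $s_1$ and $s_2$ (and noting $w_0$ acts as $-\mathrm{id}$) produces all twelve vectors $\sigma(\lambda+\rho)$; subtracting $\mu+\rho$ gives, for each $\sigma$, a vector $c_1(\sigma)\a_1+c_2(\sigma)\a_2$. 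A short tabulation shows $c_1(\sigma)\ge 0$ exactly for $\sigma\in\{1,\,s_1,\,s_2,\,s_2s_1\}$, with $(c_1,c_2)$ equal to $(1,n+1)$, $(0,n+1)$, $(1,n-\ell)$, $(0,n-\ell-1)$ respectively, while $c_1(\sigma)<0$ for each of the other eight elements (using $\ell\ge 1$). Since $n<\ell$, the values $n-\ell$ and $n-\ell-1$ are negative, so $s_2,s_2s_1\notin\A(\lambda,\mu)$; and for $\sigma\in\{1,s_1\}$ both coordinates are nonnegative, giving $1(\lambda+\rho)-(\mu+\rho)=\a_1+(n+1)\a_2$ and $s_1(\lambda+\rho)-(\mu+\rho)=(n+1)\a_2$. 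Therefore $\A(\lambda,\mu)=\{1,s_1\}$.

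Finally, the case $\lambda=0$, i.e.\ $\ell=0$: the hypothesis $3\ell-1=2m_1+3m_2$ becomes $-1=2m_1+3m_2$, which has no solution with $m_1,m_2\in\ZZ_{\ge 0}$, so there is no admissible $\mu$ and $\A(0,\mu)=\emptyset$ vacuously. I do not expect a genuine obstacle: the argument is a bounded computation. The only points requiring care are (i) recording the simple-reflection action correctly — the stated fundamental-weight action forces $s_1(\a_2)=3\a_1+\a_2$ and $s_2(\a_1)=\a_1+\a_2$ (equivalently, $\a_1$ is the short root), and getting this backwards would scramble the orbit — and (ii) the twelve-fold orbit bookkeeping, which can be halved by pairing $\sigma$ with $w_0\sigma$ since $w_0$ acts as $-\mathrm{id}$.
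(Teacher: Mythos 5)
Your proposal is correct and follows essentially the same route as the paper: parametrize $m_1=3n+1$, $m_2=\ell-1-2n$, run through all twelve Weyl group elements, and conclude by inspecting the signs of the two coordinates of $\sigma(\lambda+\rho)-(\mu+\rho)$ in the basis $\{\a_1,\a_2\}$. One incidental point in your favor: your value $(0,\,n-\ell-1)$ for $s_2s_1$ is the correct one (the paper's table lists $3\a_1+(n-\ell+2)\a_2$ for that element, which at $\ell=2$, $n=0$ would have nonnegative coefficients and so would not exclude $s_2s_1$), so your tabulation actually closes a small arithmetic slip in the published computation while reaching the same conclusion for all $\ell\ge 1$.
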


\begin{proof}
Let $\lambda=\ell\w_2$ with $\ell\geq 1$ be a weight of $G_2$. 
If $\mu=m_1\varpi_1+m_2\varpi_2$, where $m_1,m_2\in\mathbb{Z}_+$ and $3\ell-1=2m_1+3m_2$, then we can parametrize $m_1=3n+1$ and $m_2=\ell-1-2n$, with $0\leq n\leq \lfloor\frac{\ell-2}{2}\rfloor$. Then \[\mu=(3n+1)\w_1+(\ell-1-2n)\w_2=(3n+1)(2\a_1+\a_2)+(\ell-1-2n)(3\a_1+2\a_2)=(3\ell-1)\a_1+(2\ell-n-1)\a_2\]
and some simple computations establish
\begin{align*}
1(\lambda+\rho)-\rho-\mu
&=\a_1+(n+1)\a_2\\
s_1(\lambda+\rho)-\rho-\mu
&=(n+1)\a_2\\
s_2(\lambda+\rho)-\rho-\mu
&=\a_1+(n-\ell)\a_2\\
s_2s_1(\lambda+\rho)-\rho-\mu
  &=3\a_1+(n-\ell+2)\a_2\\
(s_2s_1)^2(\lambda+\rho)-\rho-\mu
&=(-3\ell-5)\a_1+(-3\ell+n-4)\a_2\\
(s_2s_1)^3(\lambda+\rho)-\rho-\mu
&=(-6\ell-11)\a_1+(-4\ell+n-5)\a_2
\\
(s_2s_1)^4(\lambda+\rho)-\rho-\mu&=(-6\ell-8)\a_1+(-3\ell+n-3)\a_2\\
(s_2s_1)^5(\lambda+\rho)-\rho-\mu&=(-3\ell-3)\a_1+(-\ell+n)\a_2\\
s_1s_2s_1(\lambda+\rho)-\rho-\mu&=(-3\ell-5)\a_1+(-\ell+n-1)\a_2\\
s_1(s_2s_1)^2(\lambda+\rho)-\rho-\mu&=(-6\ell-9)\a_1+(-3\ell+n-4)\a_2\\
s_1(s_2s_1)^3(\lambda+\rho)-\rho-\mu&=(-6\ell-8)\a_1+(-4\ell+n-5)\a_2\\
s_1(s_2s_1)^4(\lambda+\rho)-\rho-\mu&=(-3\ell-3)\a_1+(-3\ell+n-3)\a_2.
\end{align*}
Based on these computations and the fact that $0\leq n\leq \lfloor\frac{\ell-2}{2}\rfloor$, we have that the coefficient of at least one of the simple roots is negative for the Weyl group elements that are not $1$ or $s_1$. This means that all Weyl group elements, aside from $1$ and $s_1$, will not contribute to the multiplicity $m(\lambda,\mu)$. Thereby implying that $\A(\lambda,\mu)=\{1,s_1\}$ whenever $\ell\geq 1$ and $\mu=(3n+1)\w_1+(\ell-1-2n)\w_2$. In the case where $\ell=0$, then there are no $\mu$ satisfying the required condition, hence $\A(\lambda,\mu)=\emptyset$.
\end{proof}
 
Figure \ref{fig} gives the Weyl alternation diagram of $G_2$, when we take $\mu=0$ and vary $\lambda$ over the $\mathbb{Z}$-linear combinations of the fundamental weights of $G_2$. Observe that there are 60 distinct colored dots and the integral weights without a colored dot represent those weights that have empty Weyl alternation set. This illustrates the 61 distinct Weyl alternation types given in~\cite[Theorem 2.6.1]{PHThesis}. Figure \ref{fig2} displays the allowable choices of $\mu$ that result in the Weyl alternation set as stated in Theorem \ref{sets:G2}.

\begin{figure}[H]
\centering
    \includegraphics[width=0.4\textwidth]{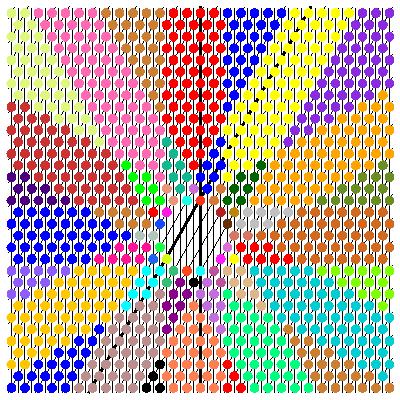}
  \caption{{The $\mu=0$ Weyl alternation diagram of $G_2$.}}\label{fig}
\end{figure}

\begin{figure}[H]
\centering
    \includegraphics[width=0.2\textwidth]{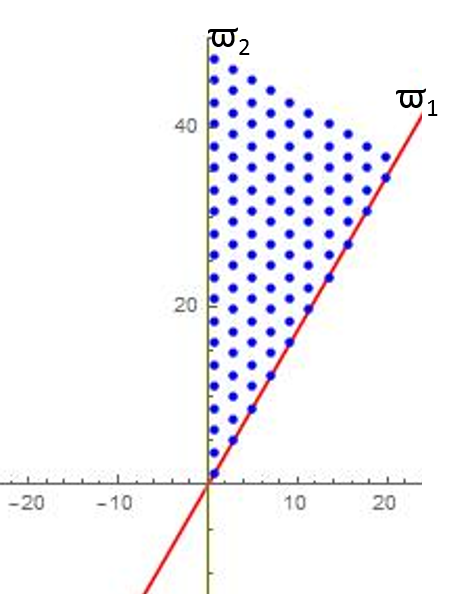}
  \caption{{Diagram highlighting weights in the Weyl alternation sets $\A(\lambda, \mu)$ when $\lambda = \ell \w_2$ and $\mu=m_1\w_1+m_2\w_2$ satisfy the conditions of Theorem \ref{sets:G2} in the Lie algebra of type $G_2$.}}\label{fig2}
\end{figure}

\

\section{A \texorpdfstring{$q-$}{q-}analog}\label{sec:qanalog}

Recall that for a weight  $\xi$, the value of the $q$-analog of Kostant's partition function is the polynomial $\wp_q(\xi)=c_0+c_1q+\cdots+c_k q^k$, where $c_j$ denotes number of ways to write $\xi$ as a nonnegative integral sum of exactly $j$ positive roots. Then the $q$-analog of Kostant's weight multiplicity formula is defined, in \cite{LL}, as:
\begin{center}
$m_q(\lambda,\mu)=\sum\limits_{\sigma\in W}^{}(-1)^{\ell(\sigma)}\wp_q(\sigma(\lambda+\rho)-(\mu+\rho))$.
\end{center}

Of course, for  multiplicity one pairs of weights evaluating $m_q(\lambda,\mu)$ at $q=1$ recovers the fact that $m(\lambda,\mu)=1$. In what follows we provide the $q$-multiplicity for the pairs of weights considered in Theorems \ref{thm:mainA}, \ref{thm:mainB}, and \ref{thm:mainG2}.

In Appendix \ref{app:tables} we provide tables for the value of $m_q(\ell\w_1,\mu)$, for multiplicity one pairs in type $A_r$ (with $1\leq r\leq 10$) and $B_r$ (with $3\leq r\leq 10$) in the cases where $1\leq \ell\leq 100$ and with all possible $\mu$ so that $m(\ell\w_1,\mu)=1$. These tables provide ample evidence in support of the following. 

\begin{repconjecture}{con:powerofq}
If $\lambda,\mu$ are a pair of weights for which $m(\lambda,\mu)=1$, then $m_q(\lambda,\mu)=q^{f(r)}$, where $f(r)$ is a function of $r$, the rank of the Lie algebra.
\end{repconjecture}
Unfortunately, the techniques we present, do not lend themselves well to provide a direct proof of the above conjecture and we would welcome a new approach to such a problem.

\subsection{Type \texorpdfstring{$A$}{A}} In this section we provide formulas for the $q$-multiplicity of the pairs of weight described in Theorem \ref{thm:mainA}. To do so we first recall that for any nonnegative integers $n,m$, 
\begin{align}
    \wp_q(n\a_1+m\a_2)=q^{max(n,m)}+q^{max(n,m)+1}+\cdots+q^{n+m}\label{eq:A2qanalog}
\end{align}
see \cite[Proposition 3.1]{HarrisLauber} for a proof. Our work will also require the following technical result.

\begin{proposition}\label{prop:q}
If $m,n,k$ are  nonnegative integers, then 
\[\wp_q(m\a_1+n\a_2+k\a_3)=q^{m+n+k}\sum_{f=0}^{\min(m,n,k)}\sum_{d=0}^{\min(m-f,n-f)}\sum_{e=0}^{\min(n-f-d,k-f)}\left(\frac{1}{q}\right)^{d+e+2f}.\]
\end{proposition}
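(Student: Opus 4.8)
The plan is to count, for each way of writing $m\a_1+n\a_2+k\a_3$ as a nonnegative integral combination of the positive roots of the $A_3$ subsystem generated by $\a_1,\a_2,\a_3$, the \emph{total number of roots used}, and then to package that count as a polynomial in $q$. The positive roots available are $\a_1,\a_2,\a_3,\a_1+\a_2,\a_2+\a_3,\a_1+\a_2+\a_3$; write their multiplicities as $a,b,c,x,y,f$ respectively, where $x$ is the number of copies of $\a_1+\a_2$, $y$ the number of copies of $\a_2+\a_3$, and $f$ the number of copies of $\a_1+\a_2+\a_3$. The linear system to satisfy is
\[
a+x+f=m,\qquad b+x+y+f=n,\qquad c+y+f=k,
\]
with all six variables nonnegative integers, and the exponent of $q$ contributed by such a solution is $a+b+c+x+y+f$.

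First I would solve the system for the ``atomic'' variables: $a=m-x-f$, $c=k-y-f$, and $b=n-x-y-f$. Nonnegativity of $a,b,c$ forces $0\le f\le\min(m,n,k)$ (using $x,y\ge 0$ in the bound for $f$, then tightening), $0\le x$ with $x\le\min(m-f,\,n-f-y)$, and $0\le y\le\min(n-f-x,\,k-f)$; reindexing $d:=x$ and $e:=y$ gives exactly the three nested ranges in the statement, namely $f$ from $0$ to $\min(m,n,k)$, then $d$ from $0$ to $\min(m-f,n-f)$, then $e$ from $0$ to $\min(n-f-d,\,k-f)$. (One should double-check that the inner bound is $\min(n-f-d,k-f)$ and not $\min(n-f-d,k-f,\text{something})$; since $b=n-f-d-e$ and $c=k-f-e$ are the only constraints involving $e$ beyond $e\ge 0$, this is correct.) Then the exponent $a+b+c+x+y+f$ simplifies: substituting the atomic values, $a+b+c+x+y+f=(m-d-f)+(n-d-e-f)+(k-e-f)+d+e+f=m+n+k-d-e-2f$. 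Hence each admissible triple $(f,d,e)$ contributes $q^{\,m+n+k-d-e-2f}=q^{m+n+k}\fq^{\,d+e+2f}$, which is precisely the summand in the claimed formula.

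The remaining step is purely bookkeeping: since $\wp_q(m\a_1+n\a_2+k\a_3)=\sum q^{(\text{number of roots used})}$ over all decompositions, and decompositions are in bijection with admissible triples $(f,d,e)$ via the formulas above, summing $q^{m+n+k-d-e-2f}$ over the three nested ranges yields the stated identity. I would present the bijection explicitly (decomposition $\leftrightarrow$ tuple $(a,b,c,x,y,f)$ $\leftrightarrow$ triple $(f,d,e)$) so that both the enumeration and the exponent computation are transparent.

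The main obstacle is getting the index ranges exactly right: one must verify that every triple $(f,d,e)$ in the stated ranges genuinely yields $a,b,c\ge 0$ (so no decomposition is overcounted or spuriously included), and conversely that no decomposition is missed because its parameters fall outside the ranges. This is a finite case-check on which of the several $\min$-constraints are active, but it is where an off-by-one or a missing constraint would slip in; I would handle it by writing the full constraint set $a=m-d-f\ge0$, $b=n-d-e-f\ge0$, $c=k-e-f\ge0$ and showing that, after bounding $f$ first and $d$ second, the surviving constraints on $e$ are exactly $0\le e\le\min(n-f-d,\,k-f)$, with the earlier bounds on $f$ and $d$ being the tightest consequences of $a,b,c\ge 0$ once the later variables are taken to be as small as possible (namely $0$).
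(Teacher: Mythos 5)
Your proposal is correct and follows essentially the same route as the paper: parametrize each partition by the multiplicities $(f,d,e)$ of $\a_1+\a_2+\a_3$, $\a_1+\a_2$, $\a_2+\a_3$, observe that these determine the multiplicities of the simple roots, and compute the total number of roots used as $m+n+k-d-e-2f$. The only difference is that you derive the nested index ranges directly from the nonnegativity constraints $a,b,c\ge 0$, whereas the paper imports that enumeration from a cited earlier result; your derivation of those ranges is correct, so the argument stands as a self-contained version of the same proof.
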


\begin{proof}
It is known that $\wp(m\a_1+n\a_2+k\a_2)=\sum_{f=0}^{\min(m,n,k)}\sum_{d=0}^{\min(m-f,n-f)}\sum_{e=0}^{\min(n-f-d,k-f)}1$, where 
\begin{itemize}
\item $d$ accounts for the number of times the positive root $\a_1+\a_2$ appears
\item $e$ accounts for the number of times the positive root $\a_2+\a_3$ appears and 
\item $f$ accounts for the number of times the positive root $\a_1+\a_2+\a_3$ appears
\end{itemize}
in the partition of the weight $m\a_1+n\a_2+k\a_3$, \cite[Proposition 2]{HPPS2018}. Note that it is enough to count these instances as they uniquely determine the number of times we must use the positive roots $\a_1$, $\a_2$, and $\a_3$. That is we must use $m-d-f$ of the positive root $\a_1$, $n-d-e-f$ of the positive root  $\a_2$, and $k-e-f$ of the positive root $\a_3$. It suffices to keep track of the total number of positive roots used in each of these partitions. This is given by $d+e+f+(m-d-f)+(n-d-e-f)+(k-e-f)=m+n+k-d-e-2f$ from which the result follows.
\end{proof}

\begin{reptheorem}{thm:qmainA}
In type $A_r$ with $r\geq 1$ and $\ell\in\mathbb{Z}_+$:
\begin{enumerate}
    \item If $\lambda=\ell\varpi_1$, $\mu=\sum_{1\leq i\leq r}m_i\varpi_i$, where $m_i\in\mathbb{Z}_+$ and $(\ell-\sum_{1\leq i\leq r} im_i)=0$, $m_j=0$ for all $3\leq j\leq r$, then 
$m_q(\lambda,\mu)=q^{m_2}$.
\item If $\lambda=\ell\varpi_1$, $\mu=\sum_{1\leq i\leq r}m_i\varpi_i$, where $m_i\in\mathbb{Z}_+$ and $(\ell-\sum_{1\leq i\leq r} im_i)= 0$ and $m_j=0$ for all $4\leq j\leq r$ and $m_3\neq 0$, then 
$m_q(\lambda,\mu)=q^{m_2+3m_3}$.
\item If $\lambda=\ell\varpi_1$, $\mu=\sum_{1\leq i\leq r}m_i\varpi_i$, where $m_i\in\mathbb{Z}_+$ and $(\ell-\sum_{1\leq i\leq r} im_i)= 0$, $m_j=0$ for all $5\leq j\leq r$ and $m_4\neq 0$, then 
$m_q(\lambda,\mu)=q^{m_2+3m_3+6}$ when $m_4=1$ and $m_q(\lambda,\mu)=q^{m_2+3m_3+6m_4}$ when $m_4\ge 2$.
\end{enumerate}
\end{reptheorem}
\begin{proof}
To establish part (1), recall from part (2) of Theorem \ref{thm:mainA} that $\A(\lambda,\mu)=\{1\}$ and $1(\lambda+\rho)-\rho-\mu=\lambda-\mu=m_2\a_1$ (see equation \eqref{eq:1part2thmA}). Hence $m_q(\lambda,\mu)=\wp_q(m_2\a_1)=q^{m_2}.$

To establish part (2), recall from part (3) of Theorem \ref{thm:mainA} that $\A(\lambda,\mu)=\{1,s_2\}$ and $1(\lambda+\rho)-\rho-\mu=\lambda-\mu=(m_2+2m_3)\a_1+m_3\a_2$ and $s_2(\lambda,\mu)=(m_2+2m_3)\a_1+(m_3-1)\a_2$ (see equation \eqref{eq:1part3thmA}). Hence \begin{align*}
    m_q(\lambda,\mu)&=\wp_q((m_2+2m_3)\a_1+m_3\a_2)-\wp_q((m_2+2m_3)\a_1+(m_3-1)\a_2)\\
    &=(q^{m_2+2m_3}+q^{m_2+2m_3+1}+\cdots+q^{m_2+3m_3})-(q^{m_2+2m_3}+q^{m_2+2m_3+1}+\cdots+q^{m_2+3m_3-1})\\
    &=q^{m_2+3m_3}.
\end{align*}
To establish part (3), recall from part (4) of Theorem \ref{thm:mainA} that $\A(\lambda,\mu)=\{1,s_2,s_3,s_2s_3\}$ when $m_4=1$ and $\A(\lambda,\mu)=\{1,s_2,s_3,s_2s_3,s_3s_2,s_2s_3s_2\}$ when $m_4\geq 2$. From the proof of Proposition \ref{prop:m4} note that when $m_4=1$ we have
\begin{align*}
 1(\lambda+\rho)-\rho-\mu&=(m_2+2m_3+3)\a_1+(m_3+2)\a_2+\a_3\\   
 s_2(\lambda+\rho)-\rho-\mu&=(m_2+2m_3+3)\a_1+(m_3+1)\a_2+\a_3\\
 s_3(\lambda+\rho)-\rho-\mu&=(m_2+2m_3+3)\a_1+(m_3+2)\a_2\\
 s_2s_3(\lambda+\rho)-\rho-\mu&=(m_2+2m_3+3)\a_1+m_3\a_2.\\
 \end{align*}
 Hence, by Proposition \ref{prop:q} we have 
 
\begin{align*}
\wp_q((m_2+2m_3+3)\a_1+(m_3+2)\a_2+\a_3)
&=q^{m_2+3m_3+6}\sum_{f=0}^{1}\sum_{d=0}^{m_3+2-f}\sum_{e=0}^{\min(m_2+2m_3+3-f-d,1-f)}\left(\frac{1}{q}\right)^{d+e+2f}\\
 &\hspace{-.3in}=q^{m_2+3m_3+6}\left[\sum_{d=0}^{m_3+2}\sum_{e=0}^{1}\left(\frac{1}{q}\right)^{d+e}
 +\sum_{d=0}^{m_3+1}\left(\frac{1}{q}\right)^{d+2}\right]\\
&\hspace{-.3in}=q^{m_2+2m_3+3}\left[q^{m_3+3}+2q^{m_3+2}+3q^{m_3+1}+\cdots+3q+2 \right].
\end{align*}
Using the same technique as above, we find that 
\begin{align*}
\wp_q((m_2+2m_3+3)\a_1+(m_3+1)\a_2+\a_3)&=q^{m_2+2m_3+3}(q^{m_3+2}+2q^{m_3+1}+3q^{m_3}+\cdots+3q+2)
\end{align*}
while using \eqref{eq:A2qanalog} yields
\begin{align*}
\wp_q((m_2+2m_3+3)\a_1+(m_3+2)\a_2)&=q^{m_2+2m_3+3}+q^{m_2+2m_3+4}+\cdots+q^{m_2+3m_3+5}\\
\wp_q((m_2+2m_3+3)\a_1+m_3\a_2)&=q^{m_2+2m_3+3}+q^{m_2+2m_3+4}+\cdots+q^{m_2+3m_3+3}.
\end{align*}

 Therefore
 \begin{align*}
 m_q(\lambda,\mu)&=
 \wp_q((m_2+2m_3+3)\a_1+(m_3+2)\a_2+\a_3)-\wp_q((m_2+2m_3+3)\a_1+(m_3+1)\a_2+\a_3)\\
 &\qquad-\wp_q((m_2+2m_3+3)\a_1+(m_3+2)\a_2)+\wp_q((m_2+2m_3+3)\a_1+m_3\a_2).\\
 &=[q^{m_2+2m_3+3}(q^{m_3+3}+2q^{m_3+2}+3q^{m_3+1}+\cdots+3q^2+3q+2)]\\
 &\qquad-[q^{m_2+2m_3+3}(q^{m_3+2}+2q^{m_3+1}+3q^{m_3}+\cdots+3q^2+3q+2)]\\
 &\qquad-[q^{m_2+2m_3+3}+q^{m_2+2m_3+4}+\cdots+q^{m_2+3m_3+5}]\\
 &\qquad+[q^{m_2+2m_3+3}+q^{m_2+2m_3+4}+\cdots+q^{m_2+3m_3+3}].
\\
&=q^{m_2+3m_3+6}.
 \end{align*}
 
 In the case where $m_4\geq 2$ we know $\A(\lambda,\mu)=\{1,s_2,s_3,s_2s_3,s_3s_2,s_2s_3s_2\}$. From the proof of Proposition \ref{prop:m4} note that when $m_4\geq 2$ we have
\begin{align}\label{eq:id}   
 1(\lambda+\rho)-\rho-\mu&=(m_2+2m_3+3m_4)\a_1+(m_3+2m_4)\a_2+m_4\a_3\\
 s_2(\lambda+\rho)-\rho-\mu&=(m_2+2m_3+3m_4)\a_1+(m_3+2m_4-1)\a_2+m_4\a_3\nonumber\\
 s_3(\lambda+\rho)-\rho-\mu&=(m_2+2m_3+3m_4)\a_1+(m_3+2m_4)\a_2+(m_4-1)\a_3\nonumber\\
 s_2s_3(\lambda+\rho)-\rho-\mu&=(m_2+2m_3+3m_4)\a_1+(m_3+2m_4-2)\a_2+(m_4-1)\a_3\nonumber\\
 s_3s_2(\lambda+\rho)-\rho-\mu&=(m_2+2m_3+3m_4)\a_1+(m_3+2m_4-1)\a_2+(m_4-2)\a_3\nonumber\\
 s_2s_3s_2(\lambda+\rho)-\rho-\mu&=(m_2+2m_3+3m_4)\a_1+(m_3+2m_4-2)\a_2+(m_4-2)\a_3\nonumber.
\end{align}
By applying Proposition \ref{prop:q} to Equation \eqref{eq:id} we have that 
\begin{align}
    \wp_q(1(\lambda+\rho)-\rho-\mu)&=q^{m_2+3m_3+6m_4}\displaystyle\sum_{f=0}^{m_4}\sum_{d=0}^{m_3+2m_4-f}\sum_{e=0}^{\min(m_3+2m_4-f-d,m_4-f)}\fq^{d+e+2f}.\label{eq:split}
    \end{align}
We now split the sum indexed by $d$ in Equation \eqref{eq:split} to reflect when the fact that $m_3+2m_4-f-d\leq m_4-f$ if and only if $m_3+m_4\leq d$. This implies that $\wp_q(1(\lambda+\rho)-\rho-\mu)$ is equal to    
    \begin{align}
    &q^{m_2+3m_3+6m_4}\left[\displaystyle\sum_{f=0}^{m_4}\left(\sum_{d=0}^{m_3+m_4-1}\sum_{e=0}^{m_4-f}\fq^{d+e+2f}+\sum_{d=m_3+m_4}^{m3+2m4-f}\sum_{e=0}^{m3+2m4-f-d}\fq^{d+e+2f}\right)\right].\label{eq:complicated}
    \end{align}
    By applying the geometric sum formula to Equation  \eqref{eq:complicated} we find that $\wp_q(1(\lambda+\rho)-\rho-\mu)$ is equal to    
    {\footnotesize \begin{align*}\frac{q^{m_2+3m_3+6m_4}}{(q-1)^3(q+1)}\left(\left(\fq^{m_4} - q\right) q \left(\fq^{m_4} - q^2\right) - \fq^{
  m_3 + 2 m_4} (1 + q) \left(\fq^{m_4} + q (-2 + m4 (-1 + q) + q)\right)\right).
    \end{align*}}
    Using the same technique as above we find that
    \begin{align*}
       \wp_q(s_2(\lambda+\rho)-\rho-\mu)& = F(q)\left(\fq^{m_4-2} + \fq^{ m4-1} - \fq^{2 m_4} - 
   q^3\right)\\&\hspace{.3in}+F(q) \fq^{
    m_3 + 2 m_4} \left(1 + q\right) \left(\fq^{m_4} + q \left( m_4 \left(q-1\right) + q-2\right)\right)\\
    \wp_q(s_3(\lambda+\rho)-\rho-\mu)&=F(q)
 \left( \fq^{m_4}-1\right) \left(\fq^{m_4} - q\right) q^2 \\
 &\hspace{.3in}-F(q)\left(-1 + 
      m_4 \left( q-1\right) + \fq^{m_4}\right) \fq^{m_3 + 2 m_4} \left(1 + q\right)\\
      \wp_q(s_2s_3(\lambda+\rho)-\rho-\mu)&=F(q) \left( \fq^{m_4}-1\right) \left(\fq^{m_4} - q\right) \\
      &\hspace{.3in}-F(q) \left(-1 + m_4 ( q-1) + \fq^
      {m_4}\right) \fq^{m_3 + 2 m_4} (1 + q)\\
      \wp_q(s_3s_2(\lambda+\rho)-\rho-\mu)&=
      F(q) \left( \fq^{ m_4-1}-1\right) \left(\fq^{m_4}-1\right) q^2\\ &\hspace{.3in}-F(q) \fq^{
    m_3 + 2 m_4} (1 + q) \left(m_4 (q-1 ) + \left(\fq^{m_4}-1\right) q\right)\\
    \wp_q(s_2s_3s_2(\lambda+\rho)-\rho-\mu)&=F(q)  \left(\fq^{ m_4-1}-1\right) \left(\fq^{m_4}-1\right) q \\
    &\hspace{.3in}-F(q) \fq^{
    m_3 + 2 m_4} (1 + q) \left(m_4 ( q-1) + \left(\fq^{m_4}-1\right) q\right)
    \end{align*}
    where
    \begin{align*}
        F(q):=-\frac{q^{m_2 + 3 m_3 + 
  6 m_4}}{\left(q-1\right)^3\left(q+1\right)}.
    \end{align*}
    By taking the sum of the above terms, with their sign corresponding to the length of the associated Weyl group element, we find that 
    \[m_q(\lambda,\mu)=q^{m_2 + 3 m_3 + 6 m_4}.\qedhere\]
\end{proof}
\subsection{Type \texorpdfstring{$B$}{B}} In this section we provide formulas for the $q$-multiplicity of the pairs of weight described in Theorem \ref{thm:mainB}.
We begin by proving the following technical results.

\begin{lemma}\label{lem1:qversiontypeB}
If $1\leq i<j\leq r$, then 
\[\wp_q(\a_i+\a_{i+1}+\cdots+\a_j)=q(1+q)^{j-i}.\]
\end{lemma}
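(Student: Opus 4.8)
The statement to prove is that for $1 \leq i < j \leq r$ in type $B_r$,
\[
\wp_q(\alpha_i + \alpha_{i+1} + \cdots + \alpha_j) = q(1+q)^{j-i}.
\]
The plan is to determine exactly which positive roots of $B_r$ can appear in a partition of the weight $\xi = \alpha_i + \alpha_{i+1} + \cdots + \alpha_j$, and then count partitions weighted by the number of parts. First I would translate $\xi$ into the $\varepsilon$-coordinates: since $\alpha_k = \varepsilon_k - \varepsilon_{k+1}$ for $1 \leq k \leq r-1$ and $\alpha_r = \varepsilon_r$, the sum telescopes to $\xi = \varepsilon_i - \varepsilon_{j+1}$ when $j < r$, and $\xi = \varepsilon_i$ when $j = r$. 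In either case $\xi$ is itself a positive root, and I would argue that the only positive roots $\beta \in \Phi^+$ with $\beta \preceq \xi$ (i.e. $\xi - \beta$ a nonnegative sum of simple roots) are those of the form $\varepsilon_a - \varepsilon_{b}$ with $i \leq a < b \leq j+1$ — equivalently, the "contiguous blocks" $\alpha_a + \alpha_{a+1} + \cdots + \alpha_{b-1}$ for $i \leq a \leq b-1 \leq j$. The key observation is that no root involving a $+\varepsilon$ term (i.e. $\varepsilon_a + \varepsilon_b$) and no root $\varepsilon_a$ with $a \leq j$ can appear in a partition of $\varepsilon_i - \varepsilon_{j+1}$ (when $j<r$), since such roots would introduce a coordinate that cannot be cancelled; and when $j = r$, $\xi = \varepsilon_i$, one checks similarly that the usable roots are exactly $\varepsilon_i - \varepsilon_{i+1}, \ldots, \varepsilon_{r-1}-\varepsilon_r, \varepsilon_r$, i.e. again the contiguous blocks $\alpha_a + \cdots + \alpha_r$.

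Next I would set up the counting bijection. A partition of $\xi = \alpha_i + \cdots + \alpha_j$ using only contiguous blocks $\alpha_a + \cdots + \alpha_b$ (with $i \leq a \leq b \leq j$), each block having all multiplicity-one coefficients, forces the multiset of blocks to be a set partition of the integer interval $\{i, i+1, \ldots, j\}$ into contiguous subintervals — because each $\alpha_k$ with $i \leq k \leq j$ must be covered exactly once, and a collection of subintervals of $\{i,\ldots,j\}$ covering each element exactly once is precisely a decomposition into consecutive runs. Such decompositions are in bijection with subsets of the $j - i$ "gaps" between consecutive indices $i, i+1, \ldots, j$: choosing a subset $S$ of the gaps to "cut at" yields $|S| + 1$ blocks. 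Therefore
\[
\wp_q(\alpha_i + \cdots + \alpha_j) = \sum_{S \subseteq \{1,\ldots,j-i\}} q^{|S|+1} = q \sum_{t=0}^{j-i} \binom{j-i}{t} q^t = q(1+q)^{j-i},
\]
which is exactly the claimed identity. An alternative, cleaner organization is induction on $j - i$: peeling off the block containing $\alpha_i$, say $\alpha_i + \cdots + \alpha_m$ for some $i \leq m \leq j$, contributes a factor $q$ and leaves the weight $\alpha_{m+1} + \cdots + \alpha_j$, giving the recurrence $\wp_q(\alpha_i+\cdots+\alpha_j) = q\bigl(\wp_q(\alpha_{i+1}+\cdots+\alpha_j) + \wp_q(\alpha_{i+2}+\cdots+\alpha_j) + \cdots + \wp_q(0)\bigr)$, which one verifies solves to $q(1+q)^{j-i}$ by a short induction since $q + q\sum_{k=0}^{j-i-1} q(1+q)^k = q + q^2 \cdot \frac{(1+q)^{j-i}-1}{q} = q(1+q)^{j-i}$.

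\textbf{Main obstacle.} The only genuinely substantive step is the root-enumeration lemma: rigorously showing that the extra positive roots available in $B_r$ beyond type $A$ — namely the roots $\varepsilon_a + \varepsilon_b$ and the short roots $\varepsilon_a$ — cannot participate in any partition of $\alpha_i + \cdots + \alpha_j$. This requires a careful coordinate argument in the $\varepsilon$-basis (tracking that every coefficient in $\sum \varepsilon$-coordinates of $\xi$ is $0$, $+1$, or $-1$ and that roots with a $+\varepsilon_b$ summand force an uncancellable $+\varepsilon_b$, while $\varepsilon_a$ with $a \leq j$ forces a surplus at position $a$). Once that is pinned down, the problem reduces to the purely combinatorial count of contiguous-interval decompositions, which is routine. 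I expect no difficulty with the induction or the binomial bookkeeping; the care is all in the case analysis $j < r$ versus $j = r$ for which roots are admissible.
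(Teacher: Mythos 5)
Your proposal is correct, and its main argument takes a genuinely different route from the paper's. The paper proves the identity by induction on $j-i$: it classifies the partitions of $\a_i+\cdots+\a_{j+1}$ according to which block $\a_\ell+\cdots+\a_{j+1}$ contains the last simple root, obtains the recurrence $\wp_q(\a_i+\cdots+\a_{j+1})=q\bigl(1+\sum_{\ell=i+1}^{j+1}\wp_q(\a_i+\cdots+\a_{\ell-1})\bigr)$, and closes it with a geometric sum — essentially the "alternative organization" you sketch at the end (peeling off the block containing $\a_i$ rather than $\a_{j+1}$, which is the same recurrence by symmetry). Your primary argument instead counts directly: once only contiguous blocks are available, a partition is exactly a decomposition of the interval $\{i,\ldots,j\}$ into consecutive runs, these are in bijection with subsets of the $j-i$ gaps, and $\sum_S q^{|S|+1}=q(1+q)^{j-i}$ by the binomial theorem. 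This is cleaner and dispenses with the induction entirely. Both arguments rest on the claim that only the roots $\a_{i'}+\cdots+\a_{j'}$ with $i\leq i'\leq j'\leq j$ can occur; the paper asserts this in one sentence without justification, whereas you correctly flag it as the substantive step and sketch an $\varepsilon$-coordinate argument. You could shorten that part: the long roots $\varepsilon_a+\varepsilon_b$ have coefficient $2$ on $\a_b,\ldots,\a_r$ while every coefficient of $\xi$ is at most $1$, so they are excluded immediately, and the short roots $\varepsilon_a=\a_a+\cdots+\a_r$ are contiguous blocks that are admissible exactly when $j=r$ — which folds both cases into the single statement the paper uses.
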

\begin{proof}
The only positive roots that we can use are of the form $\a_{i'}+\cdots+\a_{j'}$ where $i\leq i'\leq j'\leq j$. We now proceed by induction on $j-i$. If $j-i=0$, then we are partitioning $\a_i$, which only has one partition with a single positive root. Thus $\wp_q(\a_i)=q=q(1+q)^0$. If $j-i=1$, then we are partitioning $\a_i+\a_{i+1}$. The possible partitions are the one consisting of only the single simple roots $\a_i$ and $\a_{i+1}$, and the partition using the positive root $\a_i+\a_{i+1}$, which contributes a $q^2$ and $q$ to $\wp_q(\a_{i}+\a_{i+1})$, respectively. Thus $\wp_q(\a_{i}+\a_{i+1})=q^2+q=q(1+q)^{1}=q(1+q)^{2-1}$. Assume that the result holds for $j-i\leq k-1$. Now we count the the partitions of $\a_i+\a_{i+1}+\cdots+\a_{j}+\a_{j+1}$ by considering  which positive root contains $\a_{j+1}$. These positive roots are of the form $\a_\ell+\cdots+\a_{j+1}$, with $i\leq \ell\leq j+1$. Thus 
\begin{align*}
    \wp_q(\a_i+\a_{i+1}+\cdots+\a_{j}+\a_{j+1})&=q\sum_{\ell=i}^{j+1}\wp_q(\a_i+\a_{i+1}+\cdots+\a_{j}+\a_{j+1}-(\a_\ell+\a_{\ell+1}+\cdots+\a_{j+1}))\\
    &=q\left(1+\sum_{\ell=i+1}^{j+1}\wp_q(\a_i+\a_{i+1}+\cdots+\a_{\ell-1})\right).
\end{align*}
By the induction hypothesis, this yields 
\begin{align*}
    \wp_q(\a_i+\a_{i+1}+\cdots+\a_{j}+\a_{j+1})&=q\left(1+\sum_{\ell=i+1}^{j+1}q(1+q)^{\ell-1-i}\right)\\
    &=q\left(1+q\left(\frac{1-(1+q)^{j-i+1}}{1-(1+q)}\right)\right)\\
    &=q(1+q)^{j+1-i},
\end{align*}
as claimed.
\end{proof}

\begin{lemma}\label{lem:qversiontypeB}
If $\ell\geq 3$ and $x,y\in\mathbb{Z}_{>0}$, then 
\[\wp_q(x\a_1+y\a_2+\a_3+\a_4+\cdots+\a_\ell)=q(1+q)^{\ell-3}[\langle x-1,y-1\rangle+\langle x,y-1\rangle+\langle x,y\rangle],\]
where $\langle m,n\rangle:=\wp_q(m\a_1+n\a_2)$ for all nonnegative integers $m$ and $n$.
\end{lemma}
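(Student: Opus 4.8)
The plan is to reduce the computation to one involving only the ``interval roots'' $\a_a+\a_{a+1}+\cdots+\a_b$ already treated in Lemma~\ref{lem1:qversiontypeB}, and then to peel off the positive root that contains $\a_3$.

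First I would pin down which positive roots of $B_r$ can occur in a partition of $\xi:=x\a_1+y\a_2+\a_3+\cdots+\a_\ell$. In the simple root basis the positive roots are $\varepsilon_i-\varepsilon_j=\a_i+\cdots+\a_{j-1}$, $\varepsilon_i=\a_i+\cdots+\a_r$, and $\varepsilon_i+\varepsilon_j=\a_i+\cdots+\a_{j-1}+2\a_j+\cdots+2\a_r$. Since the coefficient of $\a_j$ in $\xi$ is $0$ for $j>\ell$ and is at most $1$ for $j\ge 3$, any root having a coefficient $\ge 2$ (the $\varepsilon_i+\varepsilon_j$ family) and any root whose support meets $\{\a_{\ell+1},\dots,\a_r\}$ cannot be used; what survives is exactly the family $\{\a_a+\a_{a+1}+\cdots+\a_b : 1\le a\le b\le\ell\}$, each member arising from a single positive root. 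So $\wp_q(\xi)$ is computed using only these roots.

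Next, because the coefficient of $\a_3$ in $\xi$ is exactly $1$, every partition of $\xi$ uses exactly one interval root $R=[a,b]$ containing the index $3$, with $a\in\{1,2,3\}$ and $3\le b\le\ell$. Conditioning $\wp_q(\xi)$ on $R$, removing $R$ leaves a partition of $\xi-(\a_a+\cdots+\a_b)$; this residual weight has coefficient $0$ on $\a_3,\dots,\a_b$, so no interval root of the remaining partition can straddle that gap, and the residual partitions factor as a partition of $x'\a_1+y'\a_2$ (with $(x',y')$ equal to $(x-1,y-1)$, $(x,y-1)$, or $(x,y)$ according to $a=1,2,3$) times a partition of $\a_{b+1}+\cdots+\a_\ell$. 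By definition the first factor contributes $\langle x',y'\rangle$, and by Lemma~\ref{lem1:qversiontypeB} the second contributes $q(1+q)^{\ell-1-b}$ when $b<\ell$ and $1$ when $b=\ell$. Including the $q$ coming from $R$ itself, the contribution of a fixed $a$ is $q\,\langle x',y'\rangle\big(1+\sum_{b=3}^{\ell-1}q(1+q)^{\ell-1-b}\big)$, and a geometric-series evaluation collapses the bracket to $(1+q)^{\ell-3}$, uniformly for $\ell\ge 3$ (the sum being empty when $\ell=3$). Summing over $a\in\{1,2,3\}$ yields $\wp_q(\xi)=q(1+q)^{\ell-3}\big(\langle x-1,y-1\rangle+\langle x,y-1\rangle+\langle x,y\rangle\big)$, using $x,y\ge 1$ so that all the $\langle\,\cdot\,,\cdot\,\rangle$ arguments are nonnegative.

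The step demanding the most care is the first one: I must verify that the $\varepsilon_i+\varepsilon_j$ roots and the ``long'' roots $\varepsilon_i=\a_i+\cdots+\a_r$ are genuinely unusable in every sub-case, in particular in the boundary case $r=\ell$, where $\varepsilon_i$ coincides with the interval root $[i,\ell]$ and so must be counted once, not twice. After that the argument is routine; alternatively, the same ``peel off the root containing $\a_\ell$'' idea gives the recursion $\wp_q(\xi)=(1+q)\,\wp_q\big(x\a_1+y\a_2+\a_3+\cdots+\a_{\ell-1}\big)$ for $\ell\ge 4$ together with the base case $\ell=3$, which one could instead present as an induction on $\ell$.
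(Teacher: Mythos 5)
Your argument is correct, and it reaches the formula by a genuinely different decomposition than the paper's. The paper conditions on the unique positive root containing $\a_\ell$, which produces the recursion $\wp_q(\xi)=q\sum_{i=1}^{\ell}\wp_q\bigl(\xi-(\a_i+\cdots+\a_\ell)\bigr)$; the terms $i=1,2,3$ yield the bracket $\langle x-1,y-1\rangle+\langle x,y-1\rangle+\langle x,y\rangle$ directly, while the terms $i\geq 4$ reproduce shorter instances of the same weight, so the paper must run an induction on $\ell$ (with base cases $\ell=3,4$) and finish with the same geometric sum you use. You instead condition on the unique root containing $\a_3$; this is the better choice of pivot, because the residual weight then splits across the zero coefficients on $\a_3,\dots,\a_b$ into an $\langle x',y'\rangle$ factor times an interval weight already covered by Lemma \ref{lem1:qversiontypeB}, so the closed form drops out in one pass with no induction. (The multiplicativity of $\wp_q$ over weights with disjoint, non-straddled supports that you invoke is exactly the factorization the paper itself uses later in the proof of Theorem \ref{thm:qmainB}.) Your preliminary step ruling out the non-interval positive roots of $B_r$ is also sound and is in fact more careful than the paper, which tacitly restricts to interval roots in both Lemma \ref{lem1:qversiontypeB} and this lemma: every $\varepsilon_i+\varepsilon_j$ carries coefficient $2$ on $\a_r$ while the weight has coefficient at most $1$ there, and any root meeting $\{\a_{\ell+1},\dots,\a_r\}$ is excluded by the zero coefficients; the double-counting you worry about in the case $\ell=r$ does not arise, since $[a,b]$ equals the single root $\varepsilon_a-\varepsilon_{b+1}$ when $b<r$ and the single root $\varepsilon_a$ when $b=r$. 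Your closing remark — that peeling off the root containing $\a_\ell$ yields $\wp_q(\xi)=(1+q)\,\wp_q(\xi-\a_\ell)$ for $\ell\geq 4$ — is a compressed form of the paper's induction, so you have effectively recovered their proof as well.
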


\begin{proof}
We will account for all partitions of $x\a_1+y\a_2+\a_3+\a_4+\cdots+\a_\ell$ by considering which positive root contains $\a_\ell$. These roots are of the form $\a_i+\cdots+\a_\ell$ where $1\leq i\leq \ell$. Thus 
\begin{align}
    \wp_q(x\a_1+y\a_2+\a_3+\a_4+\cdots+\a_\ell)&=\sum_{i=1}^{\ell}q\wp_q(x\a_1+y\a_2+\a_3+\a_4+\cdots+\a_\ell-(\a_i+\cdots+\a_\ell)),\label{eq:qrecursion}
\end{align}
where the factor of $q$ is accounting for the use of the positive root $\a_i+\cdots+\a_\ell$.  We now proceed by induction on $\ell$. Notice that when $\ell=3$, by Equation \eqref{eq:qrecursion} we know 
\begin{align*}
    \wp_q(x\a_1+y\a_2+\a_3)&=q[\wp_q((x-1)\a_1+(y-1)\a_2)+\wp_q(x\a_1+(y-1)\a_2)+\wp_q(x\a_1+y\a_2)\\
    &=q(1+q)^{3-3}[\langle x-1,y-1\rangle+\langle x,y-1\rangle+\langle x,y\rangle].
\end{align*}  
When $\ell=4$, by Equation \eqref{eq:qrecursion} we know 
\begin{align*}
    \wp_q(x\a_1+y\a_2+\a_3+\a_4)&=q[\wp_q((x-1)\a_1+(y-1)\a_2)+\wp_q(x\a_1+(y-1)\a_2)+\wp_q(x\a_1+y\a_2)]\\
    &\hspace{.5in}+q\wp_q(x\a_1+y\a_2+\a_3)\\
    &=q(1+q)^{4-3}[\langle x-1,y-1\rangle+\langle x,y-1\rangle+\langle x,y\rangle].
\end{align*}
Now assume that for any $3\leq k\leq \ell-1$ the result holds. Notice that when $k=\ell$, using the recurrence in Equation \eqref{eq:qrecursion} and the induction hypothesis we have 
\begin{align*}
    \wp_q\left(x\a_1+y\a_2+\sum_{i=3}^{\ell}\a_i\right)&=q\sum_{i=1}^{\ell}\wp_q(x\a_1+y\a_2+\a_3+\a_4+\cdots+\a_\ell-(\a_i+\cdots+\a_\ell))\\
    &=q[\wp_q((x-1)\a_1+(y-1)\a_2)+\wp_q(x\a_1+(y-1)\a_2)+\wp_q(x\a_1+y\a_2)]\\
    &\qquad+q\sum_{i=4}^{\ell}\wp_q(x\a_1+y\a_2+\a_3+\a_4+\cdots+\a_{i-1})\\
    &=q[\langle x-1,y-1\rangle+\langle x,y-1\rangle+\langle x,y\rangle]\\
    &\qquad+q\sum_{i=4}^{\ell}q(1+q)^{(i-1)-3}[\langle x-1,y-1\rangle+\langle x,y-1\rangle+\langle x,y\rangle]\\
    &=q\left(1+q\sum_{i=0}^{\ell-4}(1+q)^{i}\right)[\langle x-1,y-1\rangle+\langle x,y-1\rangle+\langle x,y\rangle]\\
    &=q\left(1+q\left(\frac{1-(1+q)^{\ell-3}}{1-(1-q)}\right)\right)[\langle x-1,y-1\rangle+\langle x,y-1\rangle+\langle x,y\rangle]\\
    &=q(1-(1-(1+q)^{\ell-3})[\langle x-1,y-1\rangle+\langle x,y-1\rangle+\langle x,y\rangle]\\
    &=q(1+q)^{\ell-3}[\langle x-1,y-1\rangle+\langle x,y-1\rangle+\langle x,y\rangle],
\end{align*}
as claimed.

\end{proof}
\begin{reptheorem}{thm:qmainB}
In type $B_r$  with $r\geq 3$:
\begin{enumerate}
    \item If  $\ell$ is a positive odd integer and $k\in \ZZ_{\geq 0}$, then   $m_q(\ell\w_1,(\ell-1-4k)\w_1+2k\w_2)=q^{2k+r}$.
    \item If $\ell, r \in 2\ZZ_{\ge 0}$ under the condition $\ell-1=\sum_{1\le i< r} im_i + \frac{rm_r}{2}$ with $m_i\in 2\ZZ_{\ge 0}$, then $m_q(\ell\w_1,\mu)=0$.
    \item  If $j,k\in \ZZ_{\geq 0}$ and $k\neq 0$, then $m_q(\ell\w_1,(\ell-1-4j-6k)\w_1+2j\w_2+2k\w_3)=q^{r+2j+6k-2}$.
\end{enumerate} 
\end{reptheorem}
\begin{proof}
Part (1):  By Lemma \ref{lem:Bnonconsprod} $\sigma\in\A(\ell\w_1,(\ell-1-4k)\w_1+2k\w_2)$ with $k\in \ZZ_{\geq 0}$ if and only if $\sigma=s_{i_1}s_{i_2}\cdots s_{i_m}$ where  $2\leq i_1,i_2,\ldots,i_m\leq r$ are nonconsecutive integers. Without loss of generality, we assume that $2\leq i_1<i_2<\ldots<i_m\leq r$. We consider the cases where $s_r$ is a factor in $\sigma$ and when it is not.\\

\noindent{\bf Case 1.} Suppose $\sigma=s_{i_1}\cdots s_{i_m}s_r$, where $2\leq i_1<i_2<\cdots<i_m\leq r-2$ are nonconsecutive integers. Then $\ell(\sigma)=1+m$ where $m\geq 0$. Notice
\begin{align}
    \sigma(\lambda+\rho)-\rho-\mu=
    (2k+1)\a_1+\a_2+\a_3+\dots+\a_r-\sum_{j=1}^m \a_{i_j}-\a_r.
    \end{align}
Let $\tau=\sigma(\lambda+\rho)-\rho-\mu=\tau_0+\tau_1+\cdots+\tau_{m-1}+\tau_m$, where
\begin{align*}
    \tau_0&=(2k+1)\a_1+\a_2+\dots+\a_{i_1-1}\\
    \tau_1&=\a_{i_1+1}+\a_{i_1+2}+\dots+\a_{i_2-1}\\
    \tau_2&=\a_{i_2+1}+\a_{i_1+2}+\dots+\a_{i_3-1}\\
    &\;\;\vdots\\
    \tau_{m-1}&=\a_{i_{m-1}+1}+\a_{i_1+2}+\dots+\a_{i_m-1}\\
    \tau_{m}&=\a_{i_{m}+1}+\a_{i_1+2}+\dots+\a_{r-1}.
\end{align*}
Partitioning $\tau$ as a sum of positive roots reduces to partitioning each of the weights $\tau_1,\ldots,\tau_m$. Since the sets of positive roots used to partition  $\tau_i$ and $\tau_j$ are disjoint for all $i\neq j$, we know that
\[\wp_q(\tau)=\wp_q(\tau_0)\wp_q(\tau_1)\cdots\wp_q(\tau_m).\]
Then by Lemma \ref{lem:qversiontypeB} we have that 
\begin{align*}
    \wp_q(\tau_0)&=q(1+q)^{(i_1-1)-3}[\wp_q(2k\a_1)+\wp_q((2k+1)\a_1)+\wp_q((2k+1)\a_1+\a_2)]\\
    &=q(1+q)^{i_1-4}\left(q^{2k}(1+q)^2\right).
\end{align*}
By Lemma \ref{lem1:qversiontypeB} we have that 
\begin{align*}
    \wp_q(\tau_1)\wp_q(\tau_2)\cdots\wp_q(\tau_{m-1})&=\displaystyle\prod_{j=1}^{m-1} q(1+q)^{(i_{j+1}-1)-(i_{j}+1)}=q^{m-1}(1+q)^{i_m-i_1-2(m-1)}
    \end{align*}
    and
    \begin{align*}
    \wp_q(\tau_m)&=q(1+q)^{r-1-(i_m+1)}.
\end{align*}
Thus 
\begin{align*}
    \wp_q(\tau)&=\left(q(1+q)^{(i_1-1)-3}\left(q^{2k}(1+q)^2\right)\right)\left(q^{m-1}(1+q)^{i_m-i_1-2(m-1)}\right)\left(q(1+q)^{r-1-(i_m+1)}\right)\\
    &=q^{2k}q^{1+m}(1+q)^{r-2-2m}.
\end{align*}
Now notice that there exist $\binom{r-2-m}{m}$ elements $\sigma\in\A(\lambda,\mu)$  such that $\sigma$ contains $s_r$ and $\ell(\sigma)=1+m\geq 1$, while  $\max\{\ell(\sigma):\sigma\in\A(\lambda,\mu)\mbox{ containing $s_r$}\}=\lfloor\frac{r-2}{2}\rfloor$. So we may compute that \begin{align}
    \sum_{\substack{\sigma\in\A(\lambda,\mu)\\\mbox{containing $s_r$}}}(-1)^{\ell(\sigma)}\wp_q(\sigma(\lambda+\rho)-\rho-\mu)&=q^{2k}\sum_{m=0}^{\lfloor\frac{r-2}{2}\rfloor}(-1)^{1+m}\binom{r-2-m}{m}q^{1+m}(1+q)^{r-2-2m}.
    \label{eq:actualcase1}
\end{align}
Proposition 3.3 in \cite{Harris} established that if $r\geq 0$, then 
\begin{align}
    \sum_{m=0}^{\lfloor\frac{r}{2}\rfloor}(-1)^m\binom{r-m}{m}q^{1+m}(1+q)^{r-2m}=\sum_{i=1}^{r+1}q^i.\label{prop:3.3}
\end{align}
Applying this result to Equation \eqref{eq:actualcase1} yields
\begin{align}
  -q^{2k}\sum_{m=0}^{\lfloor\frac{r-2}{2}\rfloor}(-1)^{m}\binom{r-2-m}{m}q^{1+m}(1+q)^{r-2-2m}=-q^{2k}\sum_{i=1}^{r-1}q^i.\label{eq:hassr}
\end{align}\\

\noindent{\bf Case 2.} Suppose $\sigma=s_{i_1}\cdots s_{i_m}$, where $2\leq i_1,i_2,\ldots,i_m\leq r-1$ are nonconsecutive integers. Then $\ell(\sigma)=m$ where $m\geq 0$. Notice
\begin{align}
    \sigma(\lambda+\rho)-\rho-\mu=
    (2k+1)\a_1+\a_2+\a_3+\dots+\a_r-\sum_{j=1}^m \a_{i_j}.
    \end{align}
Let $\tau=\sigma(\lambda+\rho)-\rho-\mu=\tau_0+\tau_1+\cdots+\tau_{m-1}+\tau_m$, where
\begin{align*}
    \tau_0&=(2k+1)\a_1+\a_2+\dots+\a_{i_1-1}\\
    \tau_1&=\a_{i_1+1}+\a_{i_1+2}+\dots+\a_{i_2-1}\\
    \tau_2&=\a_{i_2+1}+\a_{i_1+2}+\dots+\a_{i_3-1}\\
    &\;\;\vdots\\
    \tau_{m-1}&=\a_{i_{m-1}+1}+\a_{i_1+2}+\dots+\a_{i_m-1}\\
    \tau_{m}&=\a_{i_{m}+1}+\a_{i_1+2}+\dots+\a_{r}.
\end{align*}
As in the previous case, partitioning $\tau$ as a sum of positive roots reduces to partitioning each of the weights $\tau_0,\tau_1,\ldots,\tau_m$. Since the sets of positive roots used to partition  $\tau_i$ and $\tau_j$ are disjoint for all $i\neq j$, we know that
\[\wp_q(\tau)=\wp_q(\tau_0)\wp_q(\tau_1)\cdots\wp_q(\tau_m).\]
Then by Lemma \ref{lem:qversiontypeB} we have that 
\begin{align*}
    \wp_q(\tau_0)&=q(1+q)^{(i_1-1)-3}[\wp_q(2k\a_1)+\wp_q((2k+1)\a_1)+\wp_q((2k+1)\a_1+\a_2)]\\
    &=q(1+q)^{i_1-4}\left(q^{2k}(1+q)^2\right).
\end{align*}
By Lemma \ref{lem1:qversiontypeB} we have that 
\begin{align*}
    \wp_q(\tau_1)\wp_q(\tau_2)\cdots\wp_q(\tau_{m-1})&=\displaystyle\prod_{j=1}^{m-1} q(1+q)^{(i_{j+1}-1)-(i_{j}+1)}=q^{m-1}(1+q)^{i_m-i_1-2(m-1)}
    \end{align*}
    and
    \begin{align*}
    \wp_q(\tau_m)&=q(1+q)^{{r}-(i_m+1)}.
\end{align*}
Thus 
\begin{align*}
    \wp_q(\tau)&=\left(q(1+q)^{i_1-4}\left(q^{2k}(1+q)^2\right)\right)\left(q^{m-1}(1+q)^{i_m-i_1-2(m-1)}\right)\left(q(1+q)^{r-(i_m+1)}\right)\\
    &=q^{2k}q^{1+m}(1+q)^{r-1-2m}.
\end{align*}
Now notice that there exist $\binom{r-1-m}{m}$ elements $\sigma\in\A(\lambda,\mu)$  such that $\sigma$ does not contains $s_r$ and $\ell(\sigma)=m\geq 0$, while  $\max\{\ell(\sigma):\sigma\in\A(\lambda,\mu)\mbox{ not containing $s_r$}\}=\lfloor\frac{r-1}{2}\rfloor$. So we may compute that \begin{align}
    \sum_{\substack{\sigma\in\A(\lambda,\mu)\\\mbox{not containing $s_r$}}}(-1)^{\ell(\sigma)}\wp_q(\sigma(\lambda+\rho)-\rho-\mu)&=q^{2k}\sum_{m=0}^{\lfloor\frac{r-1}{2}\rfloor}(-1)^{1+m}\binom{r-1-m}{m}q^{1+m}(1+q)^{r-1-2m}.
    \label{eq:actualcase2}
\end{align}
Applying \eqref{prop:3.3} to Equation \eqref{eq:actualcase1} yields
\begin{align}
   q^{2k}\sum_{m=0}^{\lfloor\frac{r-1}{2}\rfloor}(-1)^{m}\binom{r-1-m}{m}q^{1+m}(1+q)^{r-1-2m}=q^{2k}\sum_{i=1}^{r}q^i.\label{eq:hasnosr}
\end{align}

The result then follows from \eqref{eq:hassr} and \eqref{eq:hasnosr} since
\begin{align*}
    m_{q}(\lambda,\mu)&=\sum_{\substack{\sigma\in\A(\lambda,\mu)\\\mbox{not containing $s_r$}}}\hspace{-.25in}(-1)^{\ell(\sigma)}\wp_q(\sigma(\lambda+\rho)-\rho-\mu)\;\;+\hspace{-.2in}\sum_{\substack{\sigma\in\A(\lambda,\mu)\\\mbox{containing $s_r$}}}\hspace{-.2in}(-1)^{\ell(\sigma)}\wp_q(\sigma(\lambda+\rho)-\rho-\mu)\\
    &=q^{2k}\sum_{i=1}^{r}q^i-q^{2k}\sum_{i=1}^{r-1}q^i\\
    &=q^{2k+r}.
\end{align*}

\noindent
Part (2) follows directly from the fact that in this case $\A(\ell\w_1,\mu)=\emptyset$. \\

\noindent Part (3). In this case $\sigma\in\A(\lambda,\mu)$ is of the form $\sigma=s_{i_1}s_{i_2}\cdots s_{i_m}$ with $2\leq i_1,i_2,\ldots, i_m\leq r$ are nonconsecutive integers or $\sigma=s_{i_1}s_{i_2}\cdots s_{i_m}s_2s_3$ with $5\leq i_1,i_2,\ldots, i_m\leq r$ are nonconsecutive integers. We consider cases depending on whether $\sigma\in\A(\lambda,\mu)$ contains $s_r$ or not, and whether it contains one of $s_2, s_3$, or $s_2s_3$, or contains none of them. Then using the same techniques as we did in Part (1) of this proof we can compute the value of the $q$-multiplicity by taking the sum of the terms given in Table \ref{tab:typeB}, where 
\begin{align*}
    f(q)&=\wp_q((2j+4k)\a_1+(2k-1)\a_2)+\wp_q((2j+4k+1)\a_1+(2k-1)\a_2)\\&\hspace{1in}+\wp_q((2j+4k+1)\a_1+2k\a_2)\\
    g(q)&=q^{2j+4k+1}\displaystyle\sum_{i=1}^{2k+1}q^i\\
    p(q)&=q^{2j+4k+1}\displaystyle\sum_{i=1}^{2k-1}q^i\\
    h(q)&=\wp_q((2j+4k)\a_1+2k\a_2)+\wp_q((2j+4k+1)\a_1+2k\a_2)\\&\hspace{1in}+\wp_q((2j+4k+1)\a_1+(2k+1)\a_2).
\end{align*}
Thus the $q$-multiplicity can be computed by taking the sum of the terms in the Table \ref{tab:typeB} which establishes
\begin{align*}
m_q(\lambda,\mu)&=\displaystyle\sum_{\sigma\in\A(\lambda,\mu)}(-1)^{\ell(\sigma)}\wp_q(\sigma(\lambda+\rho)-\rho-\mu)\\
&=q^{r-3}\left[ q(h(q)-f(q))+(p(q)-g(q))\right]\\
&=q^{r-3}\left[ q\left(q^{2j+4k}(q^{2k})+q^{2j+4k}(q^{2k+1}+q^{2k})\right)-q^{2j+4k+1}\left(q^{2k}+q^{2k+1}\right)\right]\\
&=q^{r-3}\left[ q^{2j+6k+1}\right]\\
&=q^{r-2+2j+6k}.
\end{align*}
\begin{table}[h!]
    \centering
    \begin{tabular}{|l|c|c|}\hline
Value of $\displaystyle\sum_{\sigma\in\A(\lambda,\mu)}(-1)^{\ell(\sigma)}\wp_q(\sigma(\lambda+\rho)-\rho-\mu)$ when:         & $\sigma $ contains $s_r$ & $\sigma$ does not contain $s_r$\\\hline
     $\sigma$ contains $s_2$    & $f(q)\displaystyle\sum_{i=1}^{r-3}q^i$ &$-f(q)\displaystyle\sum_{i=1}^{r-2}q^i$\\\hline
     $\sigma$     contains $s_3$& $g(q)\displaystyle\sum_{i=1}^{r-4}q^i$ &$-g(q)\displaystyle\sum_{i=1}^{r-3}q^i$\\\hline
         $\sigma$ contains $s_2s_3$&  $-p(q)\displaystyle\sum_{i=1}^{r-4}q^i$&$p(q)\displaystyle\sum_{i=1}^{r-3}q^i$\\\hline
         $\sigma$ does not contain $s_2$, $s_3$, or $s_2s_3$&$-h(q)\displaystyle\sum_{i=1}^{r-3}q^i$&$h(q)\displaystyle\sum_{i=1}^{r-2}q^i$\\\hline
    \end{tabular}
    \caption{Computing parts of the $q$-multiplicity on subsets of $\A(\lambda,\mu)$.}
    \label{tab:typeB}
\end{table}
\end{proof}

\subsection{Type \texorpdfstring{$G_2$}{G2}} In this section we provide a formula for the $q$-multiplicity of the pairs of weight described in Theorem \ref{thm:mainG2}.
\begin{reptheorem}{thm:qmainG2}
Let $\lambda=\ell\w_2$ with $\ell\geq 1$ be a weight of $G_2$. If $\mu=m_1\varpi_1+m_2\varpi_2$, where $m_1,m_2\in\mathbb{Z}_{\geq 0}$ and $3\ell-1=2m_1+3m_2$, then 
$m_q(\lambda,\mu)=q^{n+2}$ where $m_1=3n+1$.    
\end{reptheorem}

\begin{proof}
Since the positive roots of $G_2$ are 
$\Phi^+=\{\a_1,\a_2,\a_1+\a_2,2\a_1+\a_2,3\a_1+\a_2,3\a_1+2\a_2\}$
and by Proposition \ref{sets:G2} we have that if $\lambda=\ell\w_2$ with $\ell\geq 1$ and $\mu=m_1\varpi_1+m_2\varpi_2$, where $m_1,m_2\in\mathbb{Z}_{\geq 0}$ and $3\ell-1=2m_1+3m_2$, then
\begin{align*}
    m_q(\lambda,\mu)&=(-1)^0\wp_q(1(\lambda+\rho)-\rho-\mu)+(-1)^1\wp_q(s_1(\lambda+\rho)-\rho-\mu)\\
    &=\wp_{q}(\a_1+(n+1)\a_2)-\wp_{q}((n+1)\a_2)\\
    &=(q^{n+2}+q^{n+1})-q^{n+1}\\
    &=q^{n+2}
\end{align*}
as claimed.
\end{proof}

\section{Future Work}\label{sec:future}
We provide a few directions for future study. The first direction would be to provide a proof of Conjecture \ref{con:powerofq}. Note that in Appendix \ref{app:tables} we provide ample evidence in support of this conjecture, yet the complication remaining is that the Weyl alternation sets are very complicated to compute for all remaining multiplicity one pairs of weights given in \cite{BZ}. 

Another interesting direction for research is to 
study the \emph{Weyl alternation poset}. This is the poset arising from the set containment of the Weyl alternation sets $\A(\lambda,\mu)$ for all integral weights of a Lie algebra. By fixing $\mu=0$ and varying $\lambda$ in the integral weight lattice, Figures \ref{fig:A2HasseDiagram}, \ref{fig:B2HasseDiagram}, and \ref{fig:G2HasseDiagram} illustrate the Weyl alternation posets of the Lie algebras of type $A_2$,  $B_2$ and $G_2$, respectively.
\begin{figure}[H]
    \centering
    \resizebox{4in}{!}{
\begin{tikzpicture}
      \node[](a0) at (0,0) {$\emptyset$};
      \node[right](a1) at (1.5*1,1.5*0){$\{\sigma_0\}$};
      \node[above](a2) at (1.5*0.5, 1.5*0.87){$\{\sigma_{1}\}$};
      \node[above](a3) at (1.5*-.5,1.5*.87){$\{\sigma_{2}\}$};
      \node[left](a4) at (1.5*-1,1.5*0){$\{\sigma_{3}\}$};
      \node[below](a5) at (1.5*-.5, 1.5*-.87){$\{\sigma_{4}\}$};
      \node[below](a6) at (1.5*.5,1.5*-.87){$\{\sigma_{5}\}$};
      \draw(a0)--(a1);
      \draw(a0)--(a2);
      \draw(a0)--(a3);
      \draw(a0)--(a4);
      \draw(a0)--(a5);
      \draw(a0)--(a6);
      \node[right](b1) at (3*.9,3*.5){$\{\sigma_0,\;\sigma_1\}$};
      \node[above](b2) at (3*0, 3*1){$\{\sigma_{1},\;\sigma_2\}$};
      \node[above](b3) at (3*-.9,3*.5){$\{\sigma_{2},\;\sigma_3\}$};
      \node[left](b4) at (3*-.9,3*-.5){$\{\sigma_{3},\;\sigma_4\}$};
      \node[below](b5) at (3*0, 3*-1){$\{\sigma_{4},\;\sigma_5\}$};
      \node[below](b6) at (3*.9,3*-.5){$\{\sigma_{5},\;\sigma_0\}$};
      \draw(b1)--(a1);
      \draw(b1)--(a2);
      \draw(b2)--(a2);
      \draw(b2)--(a3);
      \draw(b3)--(a3);
      \draw(b3)--(a4);
      \draw(b4)--(a4);
      \draw(b4)--(a5);
      \draw(b5)--(a5);
      \draw(b5)--(a6);
      \draw(b6)--(a6);
      \draw(b6)--(a1);
\node[right] at (5.5,3){{\bf{Key}}};
\node[right] at (5.5,2){$\sigma_{0}:=1$};
\node[right] at (5.5,1){$\sigma_{1}:=s_2$};
\node[right] at (5.5,0){$\sigma_{2}:=s_1s_2$};
\node[right] at (5.5,-1){$\sigma_{3}:=s_2s_1s_2$};
\node[right] at (5.5,-2){$\sigma_{4}:=(s_1s_2)^2$};
\node[right] at (5.5,-3){$\sigma_{5}:=s_1s_2s_1$};
\draw(5,-3.5)--(8,-3.5)--(8,3.5)--(5,3.5)--(5,-3.5);
   \end{tikzpicture}
    }
    \caption{Hasse diagram of the Weyl alternation sets $\A(\lambda,0)$ of Lie algebra of type $A_2$.}
    \label{fig:A2HasseDiagram}
\end{figure}
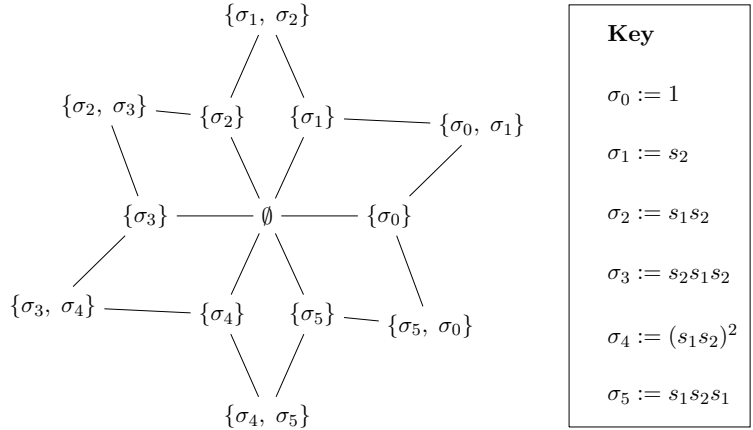

\begin{figure}[H]
    \centering
    \resizebox{5in}{!}{
\begin{tikzpicture}
      \node[](a0) at (0,0) {$\emptyset$};
      \node[right](a1) at (1.5*1,1.5*0){$\{\sigma_0\}$};
      \node[right](a2) at (1.5*0.71, 1.5*0.71){$\{\sigma_{1}\}$};
      \node[above](a3) at (1.5*0.0,1.5*1){$\{\sigma_{2}\}$};
      \node[left](a4) at (1.5*-.71,1.5*.71){$\{\sigma_{3}\}$};
      \node[left](a5) at (1.5*-1, 1.5*0){$\{\sigma_{4}\}$};
      \node[left](a6) at (1.5*-.71,1.5*-.71){$\{\sigma_{5}\}$};
      \node[below](a7) at (1.5*0.0,1.5*-1){$\{\sigma_{6}\}$};
      \node[right](a8) at (1.5*.71, 1.5*-.71){$\{\sigma_{7}\}$};
      \draw(a0)--(a1);
      \draw(a0)--(a2);
      \draw(a0)--(a3);
      \draw(a0)--(a4);
      \draw(a0)--(a5);
      \draw(a0)--(a6);
      \draw(a0)--(a7);
      \draw(a0)--(a8);
      \node[right](b1) at (3*.92,3*.38){$\{\sigma_0,\;\sigma_1\}$};
      \node[above](b2) at (3*0.4, 3*0.92){$\{\sigma_{1},\;\sigma_2\}$};
      \node[above](b3) at (3*-.4,3*.92){$\{\sigma_{2},\;\sigma_3\}$};
      \node[left](b4) at (3*-.92,3*.38){$\{\sigma_{3},\;\sigma_4\}$};
      \node[left](b5) at (3*-.92, 3*-.38){$\{\sigma_{4},\;\sigma_5\}$};
      \node[below](b6) at (3*-.4,3*-.92){$\{\sigma_{5},\;\sigma_6\}$};
      \node[below](b7) at (3*.4,3*-.92){$\{\sigma_{6},\;\sigma_7\}$};
      \node[right](b8) at (3*.92, 3*-.38){$\{\sigma_{7},\sigma_0\}$};
      \draw(b1)--(a1);
      \draw(b1)--(a2);
      \draw(b2)--(a2);
      \draw(b2)--(a3);
      \draw(b3)--(a3);
      \draw(b3)--(a4);
      \draw(b4)--(a4);
      \draw(b4)--(a5);
      \draw(b5)--(a5);
      \draw(b5)--(a6);
      \draw(b6)--(a6);
      \draw(b6)--(a7);
      \draw(b7)--(a7);
      \draw(b7)--(a8);
      \draw(b8)--(a8);
      \draw(b8)--(a1);
      \node[right](c1) at (4.5*1,4.5*0){$\{\sigma_0,\;\sigma_1,\;\sigma_7\}$};
      \node[right](c2) at (4.5*0.71, 4.5*0.71){$\{\sigma_0,\;\sigma_1,\;\sigma_2\}$};
      \node[above](c3) at (4.5*0.0,4.5*1){$\{\sigma_1,\;\sigma_{2},\;\sigma_3\}$};
      \node[left](c4) at (4.5*-.71,4.5*.71){$\{\sigma_{2},\;\sigma_3,\;\sigma_4\}$};
      \node[left](c5) at (4.5*-1, 4.5*0){$\{\sigma_{3},\;\sigma_4,\;\sigma_5\}$};
      \node[left](c6) at (4.5*-.71,4.5*-.71){$\{\sigma_{4},\;\sigma_5,\;\sigma_6\}$};
      \node[below](c7) at (4.5*0.0,4.5*-1){$\{\sigma_{5},\;\sigma_6,\;\sigma_7\}$};
      \node[right](c8) at (4.5*.71, 4.5*-.71){$\{\sigma_0,\;\sigma_{6},\;\sigma_7\}$};
      \draw(c2)--(b1);
      \draw(c2)--(b2);
      \draw(c3)--(b2);
      \draw(c3)--(b3);
      \draw(c4)--(b3);
      \draw(c4)--(b4);
      \draw(c5)--(b4);
      \draw(c5)--(b5);
      \draw(c6)--(b5);
      \draw(c6)--(b6);
      \draw(c7)--(b6);
      \draw(c7)--(b7);
      \draw(c8)--(b7);
      \draw(c8)--(b8);
      \draw(c1)--(b8);
      \draw(c1)--(b1);      
\node[right] at (9,4){{\bf{Key}}};
\node[right] at (9,3){$\sigma_{0}:=1$};
\node[right] at (9,2){$\sigma_{1}:=s_2$};
\node[right] at (9,1){$\sigma_{2}:=s_1s_2$};
\node[right] at (9,0){$\sigma_{3}:=s_2s_1s_2$};
\node[right] at (9,-1){$\sigma_{4}:=(s_1s_2)^2$};
\node[right] at (9,-2){$\sigma_{5}:=s_1s_2s_1$};
\node[right] at (9,-3){$\sigma_{6}:=s_2s_1$};
\node[right] at (9,-4){$\sigma_{7}:=s_1$};
\draw(8.5,-4.5)--(12,-4.5)--(12,4.5)--(8.5,4.5)--(8.5,-4.5);
   \end{tikzpicture}
    }
    \caption{Hasse diagram of the Weyl alternation sets $\A(\lambda,0)$ of Lie algebra of type $B_2$.}
    \label{fig:B2HasseDiagram}
\end{figure}

\begin{figure}[H]
\centering
  \resizebox{6in}{!}{
\begin{tikzpicture}
      \node[](a0) at (0,0) {$\emptyset$};
      \node[right](a1) at (1.5*1,1.5*0){$\{\sigma_0\}$};
      \node[right](a2) at (1.5*0.9, 1.5*0.5){$\{\sigma_{1}\}$};
      \node[right](a3) at (1.5*0.5,1.5*0.9){$\{\sigma_{2}\}$};
      \node[above](a4) at (1.5*0,1.5*1){$\{\sigma_{3}\}$};
      \node[left](a5) at (1.5*-0.5, 1.5*0.9){$\{\sigma_{4}\}$};
      \node[left](a6) at (1.5*-0.9,1.5* 0.5){$\{\sigma_{5}\}$};
      \node[left](a7) at (1.5*-1,1.5*0){$\{\sigma_{6}\}$};
      \node[left](a8) at (1.5*-0.9, 1.5*-0.5){$\{\sigma_{7}\}$};
      \node[left](a9) at (1.5*-0.5, 1.5*-0.9){$\{\sigma_{8}\}$};
      \node[below](a10) at (1.5*0,1.5*-1){$\{\sigma_{9}\}$};
      \node[right](a11) at (1.5*.5, 1.5*-0.9){$\{\sigma_{10}\}$};
      \node[right](a12) at (1.5*.9, 1.5*-0.5) {$\{\sigma_{11}\}$};
      \draw(a0)--(a1);
      \draw(a0)--(a2);
      \draw(a0)--(a3);
      \draw(a0)--(a4);
      \draw(a0)--(a5);
      \draw(a0)--(a6);
      \draw(a0)--(a7);
      \draw(a0)--(a8);
      \draw(a0)--(a9);
      \draw(a0)--(a10);
      \draw(a0)--(a11);
      \draw(a0)--(a12);
\node[right](b1) at (3.5*.71,3.5*.71){$\{\sigma_{1},\;\sigma_{2}\}$};
      \node[above](b2) at (3.5*0.3, 3.5*0.97){$\{\sigma_{2},\;\sigma_{3}\}$};
      \node[above](b3) at (3.5*-0.3, 3.5*0.97){$\{\sigma_{3},\;\sigma_{4}\}$};
      \node[left](b4) at (3.5*-0.7,3.5*0.7){$\{\sigma_{4},\;\sigma_{5}\}$};
      \node[left](b5) at (3.5*-0.97, 3.5*0.3){$\{\sigma_{5},\;\sigma_{6}\}$};
      \node[left](b6) at (3.5*-0.97, 3.5*-0.3){$\{\sigma_{6},\sigma_{7}\}$};
      \node[left](b7) at (3.5*-0.7, 3.5*-0.7){$\{\sigma_{7},\;\sigma_{8}\}$};
      \node[below](b8) at (3.5*-0.3, 3.5*-1.){$\{\sigma_{8},\;\sigma_{9}\}$};
      \node[below](b9) at (3.5*0.3, 3.5*-1.){$\{\sigma_{9},\;\sigma_{10}\}$};
      \node[right](b10) at (3.5*0.7, 3.5*-0.7){$\{\sigma_{10},\;\sigma_{11}\}$};
      \node[right](b11) at (3.5*1., 3.5*-0.3){$\{\sigma_{11},\;\sigma_0\}$};
      \node[right](b12) at (3.5*1., 3.5*0.3) {$\{\sigma_0,\;\sigma_{1}\}$};
      \draw(b12)--(a1);
      \draw(b12)--(a2);
      \draw(b1)--(a2);
      \draw(b1)--(a3);
      \draw(b2)--(a3);
            \draw(b2)--(a4);
      \draw(b3)--(a4);
            \draw(b3)--(a5);
      \draw(b4)--(a5);
            \draw(b4)--(a6);
      \draw(b5)--(a6);
            \draw(b5)--(a7);
      \draw(b6)--(a7);
            \draw(b6)--(a8);
      \draw(b7)--(a8);
            \draw(b7)--(a9);
      \draw(b8)--(a9);
            \draw(b8)--(a10);
      \draw(b9)--(a10);
            \draw(b9)--(a11);
      \draw(b10)--(a11);
            \draw(b10)--(a12);
      \draw(b11)--(a12);
            \draw(b11)--(a1);
            \node[right](c1) at (6*1,6*0){$\{\sigma_0,\;\sigma_{1},\;\sigma_{11}\}$};
      \node[right](c2) at (6*0.9, 6*0.5){$\{\sigma_0,\;\sigma_{1},\;\sigma_{2}\}$};
      \node[right](c3) at (6*0.5,6*0.9){$\{\sigma_{1},\;\sigma_{2},\;\sigma_{3}\}$};
      \node[above](c4) at (6*0,6*1){$\{\sigma_{2},\;\sigma_{3},\;\sigma_{4}\}$};
      \node[left](c5) at (6*-0.5, 6*0.9){$\{\sigma_{3},\;\sigma_{4},\;\sigma_{5}\}$};
      \node[left](c6) at (6*-0.9,6* 0.5){$\{\sigma_{4},\;\sigma_{5},\;\sigma_{6}\}$};
      \node[left](c7) at (6*-1,6*0){$\{\sigma_{5},\;\sigma_{6},\;\sigma_{7}\}$};
      \node[left](c8) at (6*-0.9, 6*-0.5){$\{\sigma_{6},\;\sigma_{7},\;\sigma_{8}\}$};
      \node[left](c9) at (6*-0.5, 6*-0.9){$\{\;\sigma_{7},\;\sigma_{8},\;\sigma_{9}\}$};
      \node[below](c10) at (6*0,6*-1){$\{\sigma_{8},\;\sigma_{9},\;\sigma_{10}\}$};
      \node[right](c11) at (6*.5, 6*-0.9){$\{\sigma_{9},\;\sigma_{10},\;\sigma_{11}\}$};
      \node[right](c12) at (6*.9, 6*-0.5) {$\{\sigma_{10},\;\sigma_{11},\;\sigma_0\}$};
	\draw(b12)--(c1);
      \draw(b12)--(c2);
      \draw(b1)--(c2);
      \draw(b1)--(c3);
      \draw(b2)--(c3);
            \draw(b2)--(c4);
      \draw(b3)--(c4);
            \draw(b3)--(c5);
      \draw(b4)--(c5);
            \draw(b4)--(c6);
      \draw(b5)--(c6);
            \draw(b5)--(c7);
      \draw(b6)--(c7);
            \draw(b6)--(c8);
      \draw(b7)--(c8);
            \draw(b7)--(c9);
      \draw(b8)--(c9);
            \draw(b8)--(c10);
      \draw(b9)--(c10);
            \draw(b9)--(c11);
      \draw(b10)--(c11);
            \draw(b10)--(c12);
      \draw(b11)--(c12);
            \draw(b11)--(c1);
                  \node[right](d1) at (9*.71,9*.71){$\{\sigma_0,\;\sigma_{1},\;\sigma_{2},\;\sigma_{3}\}$};
      \node[above](d2) at (9*0.3, 9*0.97){$\{\sigma_{1},\;\sigma_{2},\;\sigma_{3},\;\sigma_{4}\}$};
      \node[above](d3) at (9*-0.3, 9*0.97){$\{\sigma_{2},\;\sigma_{3},\;\sigma_{4},\;\sigma_{5}\}$};
      \node[left](d4) at (9*-0.7,9*0.7){$\{\sigma_{3},\;\sigma_{4},\;\sigma_{5},\;\sigma_{6}\}$};
      \node[left](d5) at (9*-0.97, 9*0.3){$\{\sigma_{4},\;\sigma_{5},\;\sigma_{6},\;\sigma_{7}\}$};
      \node[left](d6) at (9*-0.97, 9*-0.3){$\{\sigma_{5},\;\sigma_{6},\;\sigma_{7},\;\sigma_{8}\}$};
      \node[left](d7) at (9*-0.7, 9*-0.7){$\{\sigma_{6},\;\sigma_{7},\;\sigma_{8},\;\sigma_{9}\}$};
      \node[below](d8) at (9*-0.3, 9*-1.){$\{\sigma_{7},\;\sigma_{8},\;\sigma_{9},\;\sigma_{10}\}$};
      \node[below](d9) at (9*0.3, 9*-1.){$\{\sigma_{8},\;\sigma_{9},\;\sigma_{10},\;\sigma_{11}\}$};
      \node[right](d10) at (9*0.7, 9*-0.7){$\{\sigma_{9},\;\sigma_{10},\;\sigma_{11},\;\sigma_0\}$};
      \node[right](d11) at (9*1., 9*-0.3){$\{\sigma_{10},\;\sigma_{11},\;\sigma_0,\;\sigma_{1}\}$};
      \node[right](d12) at (9*1., 9*0.3) {$\{\sigma_{11},\;\sigma_0,\;\sigma_{1},\;\sigma_{2}\}$};
	\draw(d12)--(c1);
      \draw(d12)--(c2);
      \draw(d1)--(c2);
      \draw(d1)--(c3);
      \draw(d2)--(c3);
            \draw(d2)--(c4);
      \draw(d3)--(c4);
            \draw(d3)--(c5);
      \draw(d4)--(c5);
            \draw(d4)--(c6);
      \draw(d5)--(c6);
            \draw(d5)--(c7);
      \draw(d6)--(c7);
            \draw(d6)--(c8);
      \draw(d7)--(c8);
            \draw(d7)--(c9);
      \draw(d8)--(c9);
            \draw(d8)--(c10);
      \draw(d9)--(c10);
            \draw(d9)--(c11);
      \draw(d10)--(c11);
            \draw(d10)--(c12);
      \draw(d11)--(c12);
            \draw(d11)--(c1);

                  \node[right](e1) at (11*1,11*0){$\{\sigma_{10},\;\sigma_{11},\;\sigma_0,\;\sigma_{1},\;\sigma_{2}\}$};
      \node[right](e2) at (11*0.9, 11*0.5){$\{\sigma_{11},\;\sigma_0,\;\sigma_{1},\;\sigma_{2},\;\sigma_{3}\}$};
      \node[right](e3) at (11*0.5,11*0.9){$\{\sigma_0,\;\sigma_{1},\;\sigma_{2},\;\sigma_{3},\;\sigma_{4}\}$};
      \node[above](e4) at (11*0,11*1){$\{\sigma_{1},\;\sigma_{2},\;\sigma_{3},\;\sigma_{4},\;\sigma_{5}\}$};
      \node[left](e5) at (11*-0.5, 11*0.9){$\{\sigma_{2},\;\sigma_{3},\;\sigma_{4},\;\sigma_{5},\;\sigma_{6}\}$};
      \node[left](e6) at (11*-0.9,11* 0.5){$\{\sigma_{3},\;\sigma_{4},\;\sigma_{5},\;\sigma_{6},\;\sigma_{7}\}$};
      \node[left](e7) at (11*-1,11*0){$\{\sigma_{4},\;\sigma_{5},\;\sigma_{6},\;\sigma_{7},\;\sigma_{8}\}$};
      \node[left](e8) at (11*-0.9, 11*-0.5){$\{\sigma_{5},\;\sigma_{6},\;\sigma_{7},\;\sigma_{8},\;\sigma_{9}\}$};
      \node[left](e9) at (11*-0.5, 11*-0.9){$\{\sigma_{6},\;\sigma_{7},\;\sigma_{8},\;\sigma_{9},\;\sigma_{10}\}$};
      \node[below](e10) at (11*0,11*-1){$\{\sigma_{7},\;\sigma_{8},\;\sigma_{9},\;\sigma_{10},\;\sigma_{11}\}$};
      \node[right](e11) at (11*.5, 11*-0.9){$\{\sigma_{8},\;\sigma_{9},\;\sigma_{10},\;\sigma_{11},\;\sigma_0\}$};
      \node[right](e12) at (11*.9, 11*-0.5) {$\{\sigma_{9},\;\sigma_{10},\;\sigma_{11},\;\sigma_0,\;\sigma_{1}\}$};
      	\draw(d12)--(e1);
      \draw(d12)--(e2);
      \draw(d1)--(e2);
      \draw(d1)--(e3);
      \draw(d2)--(e3);
            \draw(d2)--(e4);
      \draw(d3)--(e4);
            \draw(d3)--(e5);
      \draw(d4)--(e5);
            \draw(d4)--(e6);
      \draw(d5)--(e6);
            \draw(d5)--(e7);
      \draw(d6)--(e7);
            \draw(d6)--(e8);
      \draw(d7)--(e8);
            \draw(d7)--(e9);
      \draw(d8)--(e9);
            \draw(d8)--(e10);
      \draw(d9)--(e10);
            \draw(d9)--(e11);
      \draw(d10)--(e11);
            \draw(d10)--(e12);
      \draw(d11)--(e12);
            \draw(d11)--(e1);
\node[right] at (16,8){{\bf{Key}}};
\node[right] at (16,7){$\sigma_{0}:=1$};
\node[right] at (16,6){$\sigma_{1}:=s_1$};
\node[right] at (16,5){$\sigma_{2}:=s_2s_1$};
\node[right] at (16,4){$\sigma_{3}:=s_1s_2s_1$};
\node[right] at (16,3){$\sigma_{4}:=(s_2s_1)^2$};
\node[right] at (16,2){$\sigma_{5}:=s_1(s_2s_1)^2$};
\node[right] at (16,1){$\sigma_{6}:=(s_2s_1)^3$};
\node[right] at (16,0){$\sigma_{7}:=s_1(s_2s_1)^3$};
\node[right] at (16,-1){$\sigma_{8}:=(s_2s_1)^4$};
\node[right] at (16,-2){$\sigma_{9}:=s_1(s_2s_1)^4$};
\node[right] at (16,-3){$\sigma_{10}:=(s_2s_1)^5$};
\node[right] at (16,-4){$\sigma_{11}:=s_1(s_2s_1)^5$};
\draw(15.5,-4.5)--(19,-4.5)--(19,8.5)--(15.5,8.5)--(15.5,-4.5);
   \end{tikzpicture}
    }
  \caption{Hasse diagram of the Weyl alternation sets $\A(\lambda,0)$ of Lie algebra of type $G_2$.}\label{fig:G2HasseDiagram}
\end{figure}

Studying these posets opens a new avenue for research in this field.

\begin{bibdiv}
\begin{biblist}

\bib{BZ}{article}{
    author={Berenshtein, A.D.},
    author={Zelevinskii, A.V.},
    title={When is the multiplicity of a weight equal to $1$?},
      date={1991},
   journal={ Funct.
Anal. Appl.},
    volume={24},
     pages={259--269},
    review={},
}

\bib{BGG}{article}{
    AUTHOR = {Bern\v ste\u\i n, I. N.},
    author={ Gel\cprime fand, I. M.},
    author={ Gel\cprime fand, S. I.},
     TITLE = {Structure of representations that are generated by vectors of
              highest weight},
   JOURNAL = {Funckcional. Anal. i Prilo\v zen.},
  JOURNAL = {Akademija Nauk SSSR. Funkcional\cprime nyi Analiz i ego Prilo\v zenija},
    VOLUME = {5},
      YEAR = {1971},
    NUMBER = {1},
     PAGES = {1--9},
      ISSN = {0374-1990},
}

\bib{CHI}{article}{
    author={Chang, K.},
    author={Harris, P. E.},
    author={Insko, E.},
    title={Kostant's Weight Multiplicity Formula and the Fibonacci and Lucas Numbers},
      date={2018},
   journal={To appear Journal of Combinatorics},
    volume={},
     pages={},
}

\bib{GW}{book}{
    author={Goodman, R.},
    author={Wallach, N. R.},
     title={Symmetry, Representations and Invariants},
publisher={Springer},
   address={New York},
      date={2009},
      ISBN={978-0-387-79851-6},
    review={\MR{2011a:20119}},
}

\bib{PH}{article}{
    author={Harris, P. E.},
    title={On the adjoint representation of $\mathfrak {sl}_n$ and the Fibonacci numbers},
      date={2011},
   journal={C. R. Math. Acad. Sci. Paris},
    volume={349},
     pages={935-937},
    review={},
}

\bib{HPPS2018}{article}{
    author={Harris, P. E.},
    author={Pankhurst, A.},
    author={Perez, C.},
    author={Siddiqui, A.},
    title={Partitions from Mars, Part 2},
    year={December 2017/January 2018},
    journal={Girls' Angle Bulletin},
    volume={11},
    number={ 2},
    pages={7-11},
    address={http://www.girlsangle.org/page/bulletin-archive/GABv11n02E.pdf},
}

\bib{PHThesis}{thesis}{
    author={Harris, P. E.},
    title={Combinatorial problems related to Kostant's weight multiplicity formula},
    school={University of Wisconsin, Milwaukee},
    year={2012},
    address={},
}
\bib{Harris}{article}{
    author={Harris, P. E.},
    title={Kostant's weight Multiplicity Formula and the Fibonacci Numbers},
      date={2011},
   journal={},
    volume={},
     pages={},
    review={https://arxiv.org/pdf/1111.6648.pdf},
}

\bib{HLM}{article}{
    author={Harris, P. E.},
    author={Lescinsky, H.},
    author={Mabie, G.},
    title={Lattice patterns for the support of Kostant's weight multiplicity formula on $\mathfrak{sl}_{3}(\mathbb{C})$},
    journal={Minnesota Journal of Undergraduate Mathematics,  June 2018.},
    volume={4},
    number={1},
    year={June 2018},
    pages={},
    review={},
    note={},
}

\bib{HarrisLauber}{article}{
    author={Harris, P. E.},
    author={Lauber, E.},
    title={Weight $q$-multiplicities for representations of $\mathfrak{sp}_4(\mathbb{C})$},
    journal={Journal of Siberian Federal University Mathematics \& Physics},
    volume={10},
    number={4},
    year={2017},
    pages={494--502},
    review={},
    note={},
}

\bib{HIS}{article}{
    author={Harris, P. E.},
    author={Insko, E.},
    author={Simpson, A.},
    title={Computing weight $q$-multiplicities for the representations of the simple Lie algebras},
    journal={A. AAECC},
    volume={},
    year={2017},
    pages={},
    review={},
    note={https://doi.org/10.1007/s00200-017-0346-7},
}

\bib{HIW}{article}{
    author={Harris, P. E.},
    author={Inkso, E.},
    author={Williams, L. K.},
    title={The adjoint representation of a classical Lie algebra and the support of Kostant's weight multiplicity formula},
    journal={Journal of Combinatorics},
    volume={7},
    year={2016},
    number={1},
    pages={75-116},
    review={},
}
\bib{Humphreys}{book}{
    AUTHOR = {Humphreys, J. E.},
     TITLE = {Introduction to {L}ie algebras and representation theory},
    SERIES = {Graduate Texts in Mathematics},
    VOLUME = {9},
      NOTE = {Second printing, revised},
 PUBLISHER = {Springer-Verlag, New York-Berlin},
      YEAR = {1978},
     PAGES = {xii+171},
      ISBN = {0-387-90053-5},
}

\bib{KMF}{article}{
    author={Kostant, B.},
     title={A formula for the multiplicity of a weight},
      date={1958},
   journal={Proc. Nat. Acad. Sci. U.S.A.},
    volume={44},
     pages={588\ndash 589},
    review={\MR{20 \#5827}},
}

\bib{LL}{article}{
    author={Lusztig, George},
     title={Singularities, character formulas, and a $q$-analog of weight multiplicities},
      date={1983},
   journal={Ast$\acute{\text{e}}$risque},
    volume={101-102},
     pages={208\ndash 229},
    review={\MR{85m:17005}},
}

\end{biblist}
\end{bibdiv}

\addresseshere

\newpage

\appendix

\section{Tables of Weyl alternation sets}\label{alttables}
In this section we consider the Lie algebras of type $B_r$ (with $2\leq r\leq 6$) and provide tables for the value of $m_q(\lambda,\mu)$ for pairs of weights $\lambda=\ell\w_1$ (with $1\leq \ell\leq 10$) and $\mu$ such that $m(\lambda,\mu)=1$ along with the Weyl alternation sets. Note that for $\ell\in\ZZ_{>0}$ the set $M_\ell$ denotes the set of weights $\mu$ for which $m(\ell\w_1,\mu)=1$. These tables show the rapid growth of the Weyl alternation sets and illustrate the complication in the description of its elements.

\vspace{.25in}
\tablehead{\hline\rowcolor{midgray}\multicolumn{4}{|c|}{The $q$-multiplicity and Weyl alternation sets for pairs of weights in the Lie algebra of type $B_2$.}\\\hline $\lambda = \ell\varpi_1$ & $\mu\in M_\ell$ & $m_q(\lambda,\mu)$ & $\sigma\in\A(\lambda,\mu)$ \\\hline\hline}
\begin{supertabular}{|c|c|c|p{4in}|}
\hline
\rowcolor{lightgray}{$\ell=1$}& $0$ & $q^{2}$ & $1, s_2$\\
\hline
{$\ell=3$}& $2\w_2$ & $q^{3}$ & $1, s_2$\\\hline
{$\ell=3$}& $2\w_1$ & $q^{2}$ & $1, s_2$\\
\hline
\rowcolor{lightgray}{$\ell=5$}& $4\w_2$ & $q^{4}$ & $1, s_2$\\\hline
\rowcolor{lightgray}{$\ell=5$}& $2\w_1+2\w_2$ & $q^{3}$ & $1, s_2$\\\hline
\rowcolor{lightgray}{$\ell=5$}& $4\w_1$ & $q^{2}$ & $1, s_2$\\
\hline
{$\ell=7$}& $6\w_2$ & $q^{5}$ & $1, s_2$\\\hline
{$\ell=7$}& $2\w_1+4\w_2$ & $q^{4}$ & $1, s_2$\\\hline
{$\ell=7$}& $4\w_1+2\w_2$ & $q^{3}$ & $1, s_2$\\\hline
{$\ell=7$}& $6\w_1$ & $q^{2}$ & $1, s_2$\\
\hline
\rowcolor{lightgray}{$\ell=9$}& $8\w_2$ & $q^{6}$ & $1, s_2$\\\hline
\rowcolor{lightgray}{$\ell=9$}& $2\w_1+6\w_2$ & $q^{5}$ & $1, s_2$\\\hline
\rowcolor{lightgray}{$\ell=9$}& $4\w_1+4\w_2$ & $q^{4}$ & $1, s_2$\\\hline
\rowcolor{lightgray}{$\ell=9$}& $6\w_1+2\w_2$ & $q^{3}$ & $1, s_2$\\\hline
\rowcolor{lightgray}{$\ell=9$}& $8\w_1$ & $q^{2}$ & $1, s_2$\\
\hline
\end{supertabular}

\vspace{.25in}

\tablehead{\hline\rowcolor{midgray}\multicolumn{4}{|c|}{The $q$-multiplicity and Weyl alternation sets for pairs of weights in the Lie algebra of type $B_3$.}\\\hline $\lambda = \ell\varpi_1$ & $\mu\in M_\ell$ & $m_q(\lambda,\mu)$ & $\sigma\in\A(\lambda,\mu)$ \\\hline\hline}
\begin{supertabular}{|c|c|c|p{3.6in}|}
\hline
\rowcolor{lightgray}{$\ell=1$}& $0$ & $q^{3}$ & $1, s_2, s_3$\\
\hline
{$\ell=3$}& $2\w_1$ & $q^{3}$ & $1, s_2, s_3$\\
\hline
\rowcolor{lightgray}{$\ell=4$}& $2\w_3$ & $q^{6}$ & $1, s_2, s_3, s_2s_3$\\
\hline
{$\ell=5$}& $2\w_2$ & $q^{5}$ & $1, s_2, s_3$\\\hline
{$\ell=5$}& $4\w_1$ & $q^{3}$ & $1, s_2, s_3$\\
\hline
\rowcolor{lightgray}{$\ell=6$}& $2\w_1+2\w_3$ & $q^{6}$ & $1, s_2, s_3, s_2s_3$\\
\hline
{$\ell=7$}& $4\w_3$ & $q^{9}$ & $1, s_2, s_3, s_2s_3$\\\hline
{$\ell=7$}& $2\w_1+2\w_2$ & $q^{5}$ & $1, s_2, s_3$\\\hline
{$\ell=7$}& $6\w_1$ & $q^{3}$ & $1, s_2, s_3$\\
\hline
\rowcolor{lightgray}{$\ell=8$}& $2\w_2+2\w_3$ & $q^{8}$ & $1, s_2, s_3, s_2s_3$\\\hline
\rowcolor{lightgray}{$\ell=8$}& $4\w_1+2\w_3$ & $q^{6}$ & $1, s_2, s_3, s_2s_3$\\
\hline
{$\ell=9$}& $2\w_1+4\w_3$ & $q^{9}$ & $1, s_2, s_3, s_2s_3$\\\hline
{$\ell=9$}& $4\w_2$ & $q^{7}$ & $1, s_2, s_3$\\\hline
{$\ell=9$}& $4\w_1+2\w_2$ & $q^{5}$ & $1, s_2, s_3$\\\hline
{$\ell=9$}& $8\w_1$ & $q^{3}$ & $1, s_2, s_3$\\
\hline
\rowcolor{lightgray}{$\ell=10$}& $6\w_3$ & $q^{12}$ & $1, s_2, s_3, s_2s_3$\\\hline
\rowcolor{lightgray}{$\ell=10$}& $2\w_1+2\w_2+2\w_3$ & $q^{8}$ & $1, s_2, s_3, s_2s_3$\\\hline
\rowcolor{lightgray}{$\ell=10$}& $6\w_1+2\w_3$ & $q^{6}$ & $1, s_2, s_3, s_2s_3$\\
\hline
\end{supertabular}

\clearpage
{\footnotesize
\tablehead{\hline\rowcolor{midgray}\multicolumn{4}{|c|}{The $q$-multiplicity and Weyl alternation sets for pairs of weights in the Lie algebra of type $B_4$.}\\\hline $\lambda = \ell\varpi_1$ & $\mu\in M_\ell$ & $m_q(\lambda,\mu)$ & $\sigma\in\A(\lambda,\mu)$ \\\hline\hline}
\begin{supertabular}{|c|c|c|p{4in}|}
\hline
\rowcolor{lightgray}{$\ell=1$}& $0$ & $q^{4}$ & $1, s_2, s_3, s_4, s_4s_2$\\
\hline
{$\ell=3$}& $2\w_1$ & $q^{4}$ & $1, s_2, s_3, s_4, s_4s_2$\\
\hline
\rowcolor{lightgray}{$\ell=5$}& $2\w_4$ & $q^{10}$ & $1, s_2, s_3, s_4, s_2s_3, s_4s_2, s_3s_2, s_3s_4, s_2s_3s_2, s_2s_3s_4$\\\hline
{$\ell=5$}& $2\w_2$ & $q^{6}$ & $1, s_2, s_3, s_4, s_4s_2$\\\hline
\rowcolor{lightgray}{$\ell=5$}& $4\w_1$ & $q^{4}$ & $1, s_2, s_3, s_4, s_4s_2$\\
\hline
{$\ell=7$}& $2\w_1+2\w_4$ & $q^{10}$ & $1, s_2, s_3, s_4, s_2s_3, s_4s_2, s_3s_2, s_3s_4, s_2s_3s_2, s_2s_3s_4$\\\hline
{$\ell=7$}& $2\w_3$ & $q^{10}$ & $1, s_2, s_3, s_4, s_2s_3, s_4s_2$\\\hline
{$\ell=7$}& $2\w_1+2\w_2$ & $q^{6}$ & $1, s_2, s_3, s_4, s_4s_2$\\\hline
{$\ell=7$}& $6\w_1$ & $q^{4}$ & $1, s_2, s_3, s_4, s_4s_2$\\
\hline
\rowcolor{lightgray}{$\ell=9$}& $4\w_4$ & $q^{16}$ & 
$1, s_2, s_3, s_4, s_2s_3, s_4s_2, s_3s_2, s_3s_4, 
s_2s_3s_2, s_2s_3s_4, s_3s_4s_2, s_2s_3s_4s_2$\\\hline
\rowcolor{lightgray}{$\ell=9$}& $2\w_2+2\w_4$ & $q^{12}$ & $1, s_2, s_3, s_4, s_2s_3, s_4s_2, s_3s_2, s_3s_4, s_2s_3s_2, s_2s_3s_4$\\\hline
\rowcolor{lightgray}{$\ell=9$}& $4\w_1+2\w_4$ & $q^{10}$ & $1, s_2, s_3, s_4, s_2s_3, s_4s_2, s_3s_2, s_3s_4, s_2s_3s_2, s_2s_3s_4$\\\hline
\rowcolor{lightgray}{$\ell=9$}& $2\w_1+2\w_3$ & $q^{10}$ & $1, s_2, s_3, s_4, s_2s_3, s_4s_2$\\\hline
\rowcolor{lightgray}{$\ell=9$}& $4\w_2$ & $q^{8}$ & $1, s_2, s_3, s_4, s_4s_2$\\\hline
\rowcolor{lightgray}{$\ell=9$}& $4\w_1+2\w_2$ & $q^{6}$ & $1, s_2, s_3, s_4, s_4s_2$\\\hline
\rowcolor{lightgray}{$\ell=9$}& $8\w_1$ & $q^{4}$ & $1, s_2, s_3, s_4, s_4s_2$\\
\hline
\end{supertabular}

\vspace{.25in}

\tablehead{\hline\rowcolor{midgray}\multicolumn{4}{|c|}{The $q$-multiplicity and Weyl alternation sets for pairs of weights in the Lie algebra of type $B_5$.}\\\hline $\lambda = \ell\varpi_1$ & $\mu\in M_\ell$ & $m_q(\lambda,\mu)$ & $\sigma\in\A(\lambda,\mu)$ \\\hline\hline}
\begin{supertabular}{|c|c|c|p{4in}|}
\hline
\rowcolor{lightgray}{$\ell=1$}& $0$ & $q^{5}$ & $1, s_2, s_3, s_4, s_5, s_4s_2, s_5s_2, s_5s_3$\\
\hline
{$\ell=3$}& $2\w_1$ & $q^{5}$ & $1, s_2, s_3, s_4, s_5, s_4s_2, s_5s_2, s_5s_3$\\
\hline
\rowcolor{lightgray}{$\ell=5$}& $2\w_2$ & $q^{7}$ & $1, s_2, s_3, s_4, s_5, s_4s_2,
s_5s_2, s_5s_3$\\\hline
\rowcolor{lightgray}{$\ell=5$}& $4\w_1$ & $q^{5}$ & $1, s_2, s_3, s_4, s_5, s_4s_2, s_5s_2, s_5s_3$\\
\hline
{$\ell=6$}& $2\w_5$ & $q^{15}$ &
$1, s_2, s_3, s_4, s_5, s_2s_3, s_4s_2, s_5s_2, s_3s_2, s_3s_4, s_5s_3, s_4s_3, \newline
s_4s_5, s_2s_3s_2, s_2s_3s_4, s_5s_2s_3, s_4s_2s_3, s_4s_5s_2, s_3s_4s_2, s_5s_3s_2,\newline
s_3s_4s_3, s_3s_4s_5, s_2s_3s_4s_2, s_5s_2s_3s_2, s_2s_3s_4s_3, s_2s_3s_4s_5$\\
\hline
\rowcolor{lightgray}{$\ell=7$}& $2\w_3$ & $q^{11}$ & $1, s_2, s_3, s_4, s_5, s_2s_3, s_4s_2, s_5s_2, s_5s_3, s_5s_2s_3$\\\hline
\rowcolor{lightgray}{$\ell=7$}& $2\w_1+2\w_2$ & $q^{7}$ & $1, s_2, s_3, s_4, s_5, s_4s_2, s_5s_2, s_5s_3$\\\hline
\rowcolor{lightgray}{$\ell=7$}& $6\w_1$ & $q^{5}$ & $1, s_2, s_3, s_4, s_5, s_4s_2, s_5s_2, s_5s_3$\\
\hline
{$\ell=8$}& $2\w_1+2\w_5$ & $q^{15}$ & 
$1, s_2, s_3, s_4, s_5, s_2s_3, s_4s_2, s_5s_2, s_3s_2, s_3s_4, s_5s_3, s_4s_3, 
\newline 
s_4s_5, s_2s_3s_2, s_2s_3s_4, s_5s_2s_3, s_4s_2s_3, s_4s_5s_2, s_3s_4s_2, s_5s_3s_2,\newline
s_3s_4s_3, s_3s_4s_5, s_2s_3s_4s_2, s_5s_2s_3s_2, s_2s_3s_4s_3, s_2s_3s_4s_5$\\
\hline
\rowcolor{lightgray}{$\ell=9$}& $2\w_4$ & $q^{17}$ & 
$1, s_2, s_3, s_4, s_5, s_2s_3, s_4s_2, s_5s_2, s_3s_2, s_3s_4, s_5s_3, 
s_2s_3s_2, \newline s_2s_3s_4, s_5s_2s_3, s_3s_4s_2, s_5s_3s_2, s_2s_3s_4s_2, s_5s_2s_3s_2$\\\hline
\rowcolor{lightgray}{$\ell=9$}& $2\w_1+2\w_3$ & $q^{11}$ & $1, s_2, s_3, s_4, s_5, s_2s_3, s_4s_2, s_5s_2, s_5s_3, s_5s_2s_3$\\\hline
\rowcolor{lightgray}{$\ell=9$}& $4\w_2$ & $q^{9}$ & $1, s_2, s_3, s_4, s_5, s_4s_2, s_5s_2, s_5s_3$\\\hline
& $4\w_1+2\w_2$ & $q^{7}$ & $1, s_2, s_3, s_4, s_5, s_4s_2, s_5s_2, s_5s_3$\\\hline
{$\ell=9$}& $8\w_1$ & $q^{5}$ & $1, s_2, s_3, s_4, s_5, s_4s_2, s_5s_2, s_5s_3$\\
\hline
{$\ell=10$}& $2\w_2+2\w_5$ & $q^{17}$ & 
$1, s_2, s_3, s_4, s_5, s_2s_3, s_4s_2, s_5s_2, s_3s_2, s_3s_4, s_5s_3, s_4s_3, s_4s_5,\newline
s_2s_3s_2, s_2s_3s_4, s_5s_2s_3, s_4s_2s_3, s_4s_5s_2, s_3s_4s_2, s_5s_3s_2, s_3s_4s_3,\newline s_3s_4s_5, s_2s_3s_4s_2, s_5s_2s_3s_2, s_2s_3s_4s_3, s_2s_3s_4s_5$\\\hline
{$\ell=10$}& $4\w_1+2\w_5$ & $q^{15}$ & $1, s_2, s_3, s_4, s_5, s_2s_3, s_4s_2, s_5s_2, s_3s_2, s_3s_4, s_5s_3, s_4s_3, s_4s_5, \newline s_2s_3s_2, s_2s_3s_4, s_5s_2s_3, s_4s_2s_3, s_4s_5s_2, s_3s_4s_2, s_5s_3s_2, s_3s_4s_3,\newline s_3s_4s_5,  s_2s_3s_4s_2, s_5s_2s_3s_2, s_2s_3s_4s_3, s_2s_3s_4s_5$\\\hline
\end{supertabular}
}

\clearpage

{\footnotesize
\tablehead{\hline\rowcolor{midgray}\multicolumn{4}{|c|}{The $q$-multiplicity and Weyl alternation sets for pairs of weights in the Lie algebra of type $B_6$.}\\\hline $\lambda = \ell\varpi_1$ & $\mu\in M_\ell$ & $m_q(\lambda,\mu)$ & $\sigma\in\A(\lambda,\mu)$ \\\hline\hline}
\begin{supertabular}{|c|c|c|p{4in}|}
\rowcolor{lightgray}
\hline
{$\ell=1$}& $0$ & $q^{6}$ & $1, s_2, s_3, s_4, s_5, s_6, s_4s_2, s_5s_2, s_6s_2, s_5s_3, s_6s_3, s_6s_4, s_6s_4s_2$\\
\hline
{$\ell=3$}& $2\w_1$ & $q^{6}$ & $1, s_2, s_3, s_4, s_5, s_6, s_4s_2, s_5s_2, s_6s_2, s_5s_3, s_6s_3, s_6s_4, s_6s_4s_2$\\
\hline
\rowcolor{lightgray}{$\ell=5$}& $2\w_2$ & $q^{8}$ & $1, s_2, s_3, s_4, s_5, s_6, s_4s_2, s_5s_2, s_6s_2, s_5s_3, s_6s_3, s_6s_4, s_6s_4s_2$\\\hline
\rowcolor{lightgray}{$\ell=5$}& $4\w_1$ & $q^{6}$ & $1, s_2, s_3, s_4, s_5, s_6, s_4s_2, s_5s_2, s_6s_2, s_5s_3, s_6s_3, s_6s_4, s_6s_4s_2$\\
\hline
{$\ell=7$}& $2\w_6$ & $q^{21}$ & 
$1, s_2, s_3, s_4, s_5, s_6, s_2s_3, s_4s_2, s_5s_2, s_6s_2, s_3s_2, s_3s_4, s_5s_3, s_6s_3,\newline
s_4s_3, s_4s_5, s_6s_4, s_5s_4, s_5s_6, s_2s_3s_2, s_2s_3s_4, s_5s_2s_3, s_6s_2s_3, \newline
s_4s_2s_3, s_4s_5s_2, s_6s_4s_2, s_5s_4s_2, s_5s_6s_2, s_3s_4s_2, s_5s_3s_2, s_6s_3s_2,\newline
s_3s_4s_3, s_3s_4s_5, s_6s_3s_4, s_5s_3s_4, s_5s_6s_3, s_4s_3s_2, s_4s_5s_3, s_6s_4s_3,\newline 
s_4s_5s_4, s_4s_5s_6, s_2s_3s_4s_2, s_5s_2s_3s_2, s_6s_2s_3s_2, s_2s_3s_4s_3, s_2s_3s_4s_5,\newline
s_6s_2s_3s_4, s_5s_2s_3s_4, s_5s_6s_2s_3, s_4s_2s_3s_2, s_4s_5s_2s_3, s_6s_4s_2s_3,\newline
s_4s_5s_4s_2, s_4s_5s_6s_2, s_3s_4s_2s_3, s_3s_4s_5s_2, s_6s_3s_4s_2, s_5s_3s_4s_2,\newline
s_5s_6s_3s_2, s_3s_4s_3s_2, s_3s_4s_5s_3, s_6s_3s_4s_3, s_3s_4s_5s_4, s_3s_4s_5s_6,\newline
s_6s_4s_3s_2, s_2s_3s_4s_2s_3, s_2s_3s_4s_5s_2, s_6s_2s_3s_4s_2, s_5s_2s_3s_4s_2,\newline
s_5s_6s_2s_3s_2, s_2s_3s_4s_3s_2, s_2s_3s_4s_5s_3, s_6s_2s_3s_4s_3, s_2s_3s_4s_5s_4,\newline
s_2s_3s_4s_5s_6, s_6s_4s_2s_3s_2, s_3s_4s_2s_3s_2, s_6s_3s_4s_2s_3, s_3s_4s_5s_4s_2,\newline
s_6s_3s_4s_3s_2, s_2s_3s_4s_2s_3s_2, s_6s_2s_3s_4s_2s_3, s_2s_3s_4s_5s_4s_2,\newline s_6s_2s_3s_4s_3s_2, s_6s_3s_4s_2s_3s_2, s_6s_2s_3s_4s_2s_3s_2$\\[3pt]\hline
{$\ell=7$}& $2\w_3$ & $q^{12}$ & 
$1, s_2, s_3, s_4, s_5, s_6, s_2s_3, s_4s_2, s_5s_2, s_6s_2, s_5s_3, s_6s_3, s_6s_4,\newline s_5s_2s_3, s_6s_2s_3, s_6s_4s_2$\\\hline
{$\ell=7$}& $2\w_1+2\w_2$ & $q^{8}$ 
& $1, s_2, s_3, s_4, s_5, s_6, s_4s_2, s_5s_2, s_6s_2, s_5s_3, s_6s_3, s_6s_4, s_6s_4s_2$\\\hline
{$\ell=7$}& $6\w_1$ & $q^{6}$ & $1, s_2, s_3, s_4, s_5, s_6, s_4s_2, s_5s_2, s_6s_2, s_5s_3, s_6s_3, s_6s_4, s_6s_4s_2$\\
\hline
\rowcolor{lightgray}{$\ell=9$}& $2\w_1+2\w_6$ & $q^{21}$ & 
$1, s_2, s_3, s_4, s_5, s_6, s_2s_3, s_4s_2, s_5s_2, s_6s_2, s_3s_2, s_3s_4, s_5s_3, s_6s_3,\newline
s_4s_3, s_4s_5, s_6s_4, s_5s_4, s_5s_6, s_2s_3s_2, s_2s_3s_4, s_5s_2s_3, s_6s_2s_3, \newline
s_4s_2s_3, s_4s_5s_2, s_6s_4s_2, s_5s_4s_2, s_5s_6s_2, s_3s_4s_2, s_5s_3s_2, s_6s_3s_2,\newline
s_3s_4s_3, s_3s_4s_5, s_6s_3s_4, s_5s_3s_4, s_5s_6s_3, s_4s_3s_2, s_4s_5s_3, s_6s_4s_3,\newline
s_4s_5s_4, s_4s_5s_6, s_2s_3s_4s_2, s_5s_2s_3s_2, s_6s_2s_3s_2, s_2s_3s_4s_3, s_2s_3s_4s_5,\newline
s_6s_2s_3s_4, s_5s_2s_3s_4, s_5s_6s_2s_3, s_4s_2s_3s_2, s_4s_5s_2s_3, s_6s_4s_2s_3,\newline
s_4s_5s_4s_2, s_4s_5s_6s_2, s_3s_4s_2s_3, s_3s_4s_5s_2, s_6s_3s_4s_2, s_5s_3s_4s_2,\newline
s_5s_6s_3s_2, s_3s_4s_3s_2, s_3s_4s_5s_3, s_6s_3s_4s_3, s_3s_4s_5s_4, s_3s_4s_5s_6,\newline
s_6s_4s_3s_2, s_2s_3s_4s_2s_3, s_2s_3s_4s_5s_2, s_6s_2s_3s_4s_2, s_5s_2s_3s_4s_2,\newline
s_5s_6s_2s_3s_2, s_2s_3s_4s_3s_2, s_2s_3s_4s_5s_3, s_6s_2s_3s_4s_3, s_2s_3s_4s_5s_4,\newline
s_2s_3s_4s_5s_6, s_6s_4s_2s_3s_2, s_3s_4s_2s_3s_2, s_6s_3s_4s_2s_3, s_3s_4s_5s_4s_2,\newline
s_6s_3s_4s_3s_2, s_2s_3s_4s_2s_3s_2, s_6s_2s_3s_4s_2s_3, s_2s_3s_4s_5s_4s_2,\newline
s_6s_2s_3s_4s_3s_2, s_6s_3s_4s_2s_3s_2, s_6s_2s_3s_4s_2s_3s_2$\\\hline
\rowcolor{lightgray}{$\ell=9$}& $2\w_4$ & $q^{18}$ & 
$1, s_2, s_3, s_4, s_5, s_6, s_2s_3, s_4s_2, s_5s_2, s_6s_2, s_3s_2, s_3s_4, s_5s_3, \newline
s_6s_3, s_6s_4, s_2s_3s_2, s_2s_3s_4, s_5s_2s_3, s_6s_2s_3, s_6s_4s_2, s_3s_4s_2, \newline
s_5s_3s_2, s_6s_3s_2, s_6s_3s_4, s_2s_3s_4s_2, s_5s_2s_3s_2, s_6s_2s_3s_2, \newline s_6s_2s_3s_4,
s_6s_3s_4s_2, s_6s_2s_3s_4s_2$\\\hline
\rowcolor{lightgray}{$\ell=9$}& $2\w_1+2\w_3$ & $q^{12}$ & 
$1, s_2, s_3, s_4, s_5, s_6, s_2s_3, s_4s_2, s_5s_2, s_6s_2, s_5s_3, s_6s_3, s_6s_4,\newline s_5s_2s_3, s_6s_2s_3, s_6s_4s_2$\\\hline
\rowcolor{lightgray}{$\ell=9$}& $4\w_2$ & $q^{10}$ & $1, s_2, s_3, s_4, s_5, s_6, s_4s_2, s_5s_2, s_6s_2, s_5s_3, s_6s_3, s_6s_4, s_6s_4s_2$\\\hline
\rowcolor{lightgray}{$\ell=9$}& $4\w_1+2\w_2$ & $q^{8}$ & $1, s_2, s_3, s_4, s_5, s_6, s_4s_2, s_5s_2, s_6s_2, s_5s_3, s_6s_3, s_6s_4, s_6s_4s_2$\\\hline
\rowcolor{lightgray}{$\ell=9$}& $8\w_1$ & $q^{6}$ & $1, s_2, s_3, s_4, s_5, s_6, s_4s_2, s_5s_2, s_6s_2, s_5s_3, s_6s_3, s_6s_4, s_6s_4s_2$\\
\hline
\end{supertabular}
}

\clearpage

\section{Tables in support of Conjecture \ref{con:powerofq}}\label{app:tables}

In this section we consider the Lie algebra of type $A_r$ ($1\leq r\leq 10$) and provide tables for the value of $m_q(\lambda,\mu)$ for pairs of weights $\lambda=\ell\w_1$ (with $1\leq \ell\leq 10$) and $\mu$ such that $m(\lambda,\mu)=1$. Note that for $\ell\in\ZZ_{>0}$ the set $M_\ell$ denotes the set of weights $\mu$ for which $m(\ell\w_1,\mu)=1$. In all of these cases note that $m_q(\lambda,\mu)$ is a power of $q$.\\
\vspace{.2in}

\twocolumn
\footnotesize
\tablehead{\hline\rowcolor{midgray}\multicolumn{4}{|c|}{Type $A_1$ table}\\\hline $\lambda = \ell\varpi_1$ & $\mu\in M_\ell$ & $m_q(\lambda,\mu)$ & $|\mathcal{A}(\lambda, \mu)|$ \\\hline\hline}


\end{document}